\providecommand{\U}[1]{\protect\rule{.1in}{.1in}}
\providecommand{\U}[1]{\protect\rule{.1in}{.1in}}
\newtheorem{theorem}{Theorem}
\newtheorem{corollary}[theorem]{Corollary}
\newtheorem{definition}[theorem]{Definition}
\newtheorem{lemma}[theorem]{Lemma}
\newtheorem{proposition}[theorem]{Proposition}
\newenvironment{proof}[1][Proof]{\noindent\textbf{#1.} }{\ \rule{0.5em}{0.5em}}
\begin{document}

\title{Dependence on parameters of CW globalizations of families of Harish-Chandra
modules and the meromorphic continuation of $C^{\infty}$ Eisenstein series}
\author{Nolan R. Wallach}
\maketitle

\begin{abstract}
The first main result is that the Casselman-Wallach Globalization of a real
analytic family of Harish-Chandra modules is continuous in the parameter. Our
proof of this result uses results from the thesis of Vincent van der Noort in
several critical ways. In his thesis the holomorphic result was proved in the
case when the parameter space is a one dimensional complex manifold up to a
branched covering. The second main result is a proof of the meromorphic
continuation of $C^{\infty}$ Eisenstein series.using Langlands' results in the
$K$ finite case as an application of the methods in the proof of the first part.

\end{abstract}

\section{Introduction}

The purpose of this article is to extend my work on smooth Fr\'{e}chet
globalizations of Harish-Chandra modules to include parameters. In
\cite{BerKro} it is asserted that their work carries that goal out. This may
be so, but non-linear groups do not appear in \cite{BerKro} (e.g. the
metaplectic group). In this paper a different tactic is taken to this problem.
We approach it from the perspective of the excellent thesis of Vincent van der
Noort who studies the question: Given an analytic family of Harish-Chandra
modules, how does the corresponding family of CW completions depend on the
parameter? The CW completion was first realized in terms of imbedding into
parabolically induced representations. This paper considers another class of
Harish-Chandra modules that were first studied in a special case in
\cite{HOW}. For lack of a name they were called J--modules. These
Harish-Chandra modules are constructed using a free subalgebra of the center
of the enveloping algebra generated by the split rank number of independent
elements that was first studied in \cite{HOW}. This algebra is denoted
$\mathbf{D}$ in this paper. In the category of Harish-Chandra modules with
$\mathbf{D}$ action by a fixed character the J--modules in the category are
projective. Furthermore, every Harish-Chandra module has a resolution by
J--modules. Much of the paper, involves analyzing the CW globalizations of
families of J--modules using a key results of van der Noort, which also play
an important role in other aspects of the paper. For the sake of completeness
a complete proof of these results is included.

In van der Noort's thesis the parametrization studied were holomorphic and
results were proved about holomorphic dependence of CW globalizations. He
essentially solved the problem in the case when the parameter space is one
complex dimension modulo the possible necessity to go to a branched covering.
In this paper I prove that if the dependence of the Harish-Chandra modules in
the parameters is real analytic then the dependence of the CW completion is
continuous (Corollary \ref{main2}).

The final two sections of the paper give a proof of the meromorphic
continuation of $C^{\infty}$ Eisenstein series using the continuation of
$K$--finite Eisenstein series in Langlands \cite{FuncEisen}, Chapter 7. This
is done by reducing the problem to a general result on the holomorphic
dependence of the extensions of what we call linear functionals of locally
uniform moderate growth on families of parabolically induced Harish-Chandra
modules. Except for the use of Van der Noort's result and some notation this
part of the paper (sections 11 and 12) can be read independently of the rest
of the paper.

\section{The subalgebra \textbf{D} of $Z(\mathfrak{g)}$}

Let $G$ be a real reductive group of inner type. That is, if $\mathfrak{g}%
=Lie(G)$, $\mathfrak{g}_{\mathbb{C}}=\mathfrak{g}\otimes\mathbb{C}$ then
$Ad(G)$ is contained in the identity component of $Aut(\mathfrak{g}%
_{\mathbb{C}}).$Let $K$ be a maximal compact subgroup of $G$ and let $\theta$
denote the corresponding Cartan involution of $G$ (and of $\mathfrak{g}$). on
Set $\mathfrak{k}=Lie(K)$ and $\mathfrak{p}=\{X\in\mathfrak{g}|\theta X=-X\}$
let $p$ be the projection of $\mathfrak{g}_{\mathbb{C}}$ onto $\mathfrak{p}%
_{\mathbb{C}}$ corresponding to $\mathfrak{g}_{\mathbb{C}}=\mathfrak{k}%
_{\mathbb{C}}\oplus\mathfrak{p}_{\mathbb{C}}$. Fix a symmetric $Ad(G)$%
--invariant bilinear form, $B$, on $\mathfrak{g}$ such that $B_{|\mathfrak{k}%
}$ is negative definite and $B_{|\mathfrak{p}}$ is positive definite Extend
$p$ to a homomorphism of $S(\mathfrak{g}_{\mathbb{C}})$ onto $S(\mathfrak{p}%
_{\mathbb{C}})$. Then $p$ is the projection corresponding to%
\[
S(\mathfrak{g}_{\mathbb{C}})=S(\mathfrak{p}_{\mathbb{C}})\oplus S(\mathfrak{g}%
_{\mathbb{C}})\mathfrak{k}_{\mathbb{C}}.
\]
In \cite{HOW} we found homogeneous elements $w_{1},...,w_{l}$ of
$S(\mathfrak{g}_{\mathbb{C}})^{G}$ with $w_{1}=\sum v_{i}^{2}$ \ with
$\{v_{1},...,v_{n}\}$ and orthonormal basis of $\mathfrak{g}_{\mathbb{C}}$
with respect $B.$ Satisfying the following properties

1. $p(w_{1}),...,p(w_{l})$ are algebraically independent.

2. There exists a finite dimensional homogeneous subspace $E$ of
$S(\mathfrak{p}_{\mathbb{C}})^{K}$ such that the map $\mathbb{C[}%
p(w_{1}),...,p(w_{l})]\otimes E\rightarrow S(\mathfrak{p}_{\mathbb{C}})^{K}$
given by multiplication is an isomorphism.

If $\mathfrak{g}_{\mathbb{C}}$ contains no simple ideals of type E one can
take $E=\mathbb{C}1.$ If $\mathfrak{g}$ is split over $\mathbb{R}$ then
$\mathbb{C}[w_{1},...,w_{l}]=S(\mathfrak{g}_{\mathbb{C}})^{G}$.

Let $\mathcal{H}$ denote the space of harmonic elements of $S(\mathfrak{p}%
_{\mathbb{C}})$, that is, the orthogonal complement to the ideal
$S(\mathfrak{p}_{\mathbb{C}})\left(  S(\mathfrak{p}_{\mathbb{C}}%
)\mathfrak{p}\right)  ^{K}$ in $S(\mathfrak{p}_{\mathbb{C}})$ relative to the
Hermitian extension of inner product $B_{|\mathfrak{p}}$. Then the
Kostant-Rallis theorem (\cite{KosRal}) implies that the map%
\[
\mathcal{H}\otimes S(\mathfrak{p}_{\mathbb{C}})^{K}\rightarrow S(\mathfrak{p}%
_{\mathbb{C}})
\]
given by multiplication is a linear bijection. This and 2. easily imply

\begin{lemma}
The map
\[
\mathcal{H}\otimes E\otimes\mathbb{C}[w_{1},...,w_{l}]\otimes S(\mathfrak{k}%
_{\mathbb{C}})\rightarrow S(\mathfrak{g}_{\mathbb{C}})
\]
given by multiplication is a linear bijection.
\end{lemma}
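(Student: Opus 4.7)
The plan is to reduce to the $S(\mathfrak{p}_{\mathbb{C}})$ statement already in hand and then to promote $p(w_{i})$ to $w_{i}$ by a short filtration argument.

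First, combining the Kostant--Rallis theorem with property 2, multiplication yields a linear bijection
\[
\mathcal{H}\otimes E\otimes\mathbb{C}[p(w_{1}),\ldots,p(w_{l})]\longrightarrow S(\mathfrak{p}_{\mathbb{C}});
\]
this is the content of the ``easily imply'' remark in the text. Because $\mathfrak{g}_{\mathbb{C}}=\mathfrak{p}_{\mathbb{C}}\oplus\mathfrak{k}_{\mathbb{C}}$ and $S$ is the symmetric algebra, multiplication also gives a vector-space isomorphism $S(\mathfrak{p}_{\mathbb{C}})\otimes S(\mathfrak{k}_{\mathbb{C}})\xrightarrow{\sim}S(\mathfrak{g}_{\mathbb{C}})$. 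Tensoring the two together produces the asserted bijection with $p(w_{i})$ in place of $w_{i}$, so all that remains is the substitution $p(w_{i})\leftrightarrow w_{i}$.

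The main (and really only) obstacle is to handle that substitution, which I would do via the decreasing $\mathfrak{k}$-degree filtration on $S(\mathfrak{g}_{\mathbb{C}})$: let $F^{q}$ be the span of monomials having at least $q$ factors from $\mathfrak{k}_{\mathbb{C}}$, using the bigrading coming from $\mathfrak{g}_{\mathbb{C}}=\mathfrak{p}_{\mathbb{C}}\oplus\mathfrak{k}_{\mathbb{C}}$. The subspace $S(\mathfrak{g}_{\mathbb{C}})\mathfrak{k}_{\mathbb{C}}=F^{1}$ is a two-sided ideal, so $p$ is an algebra homomorphism and $w_{i}\equiv p(w_{i})\pmod{F^{1}}$; consequently $P(w_{1},\ldots,w_{l})\equiv P(p(w_{1}),\ldots,p(w_{l}))\pmod{F^{1}}$ for every polynomial $P$. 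Put the analogous filtration on the source by declaring the weight of $h\otimes e\otimes P(w)\otimes z$ to be $\deg z$. The multiplication map is then filtered, and on the associated graded it collapses to the isomorphism of the previous paragraph (in the top $\mathfrak{k}$-weight piece only the $p(w_{i})$-component of each $w_{i}$ survives).

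Since $w_{i}$ and $p(w_{i})$ share the same total degree, the two sides of the lemma have the same dimension in each total degree, and within a given total degree the $\mathfrak{k}$-filtration has only finitely many nonzero steps. The standard fact that a filtered map inducing an isomorphism on the associated graded of a finite filtration is itself an isomorphism then shows that multiplication is a bijection in each total degree, hence globally. No heavier ingredient enters the argument: Kostant--Rallis, property 2 of the $w_{i}$, and the decomposition $S(\mathfrak{g}_{\mathbb{C}})=S(\mathfrak{p}_{\mathbb{C}})\oplus S(\mathfrak{g}_{\mathbb{C}})\mathfrak{k}_{\mathbb{C}}$ together do all of the work.
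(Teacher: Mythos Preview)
Your proof is correct and follows exactly the line the paper indicates with its terse ``This and 2.\ easily imply'': combine Kostant--Rallis with property 2 to get the $S(\mathfrak{p}_{\mathbb{C}})$ statement, tensor up to $S(\mathfrak{g}_{\mathbb{C}})$ via $S(\mathfrak{p}_{\mathbb{C}})\otimes S(\mathfrak{k}_{\mathbb{C}})\cong S(\mathfrak{g}_{\mathbb{C}})$, and then swap $p(w_i)$ for $w_i$. The paper gives no further detail, so your $\mathfrak{k}$-degree filtration argument is a correct and standard way to justify that last step, which the paper leaves implicit.
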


Let $\mathfrak{a}$ be a maximal abelian subspace of $\mathfrak{p}$ and let
\[
W=\{s\in GL(\mathfrak{a})|s=Ad(k)_{|\mathfrak{a}},k\in K\}.
\]
Let $H\in\mathfrak{a}$ be such that $\mathfrak{a}=\{X\in\mathfrak{p}%
|[H,X]=0\}$. If $\lambda\in\mathbb{R}$ then set $\mathfrak{g}^{\lambda}%
=\{X\in\mathfrak{g}|[H,X]=\lambda X\}$. Set $\mathfrak{n=\oplus}_{\lambda
>0}\mathfrak{g}_{\lambda}$ and $\mathfrak{\bar{n}=\theta n=\oplus}_{\lambda
>0}\mathfrak{g}_{-\lambda}$. Then%
\[
\mathfrak{p}=p(\mathfrak{n)\oplus a}%
\]
and $p(\mathfrak{n})$ is the orthogonal complement to $\mathfrak{a}$ in
$\mathfrak{p}$ relative to $B$. Let $q$ be the projection of $\mathfrak{p}$
onto $\mathfrak{a}$ corresponding to this decomposition. Then the Chevalley
restriction theorem implies that
\[
q:S(\mathfrak{p})^{K}\rightarrow S(\mathfrak{a})^{W}%
\]
is an isomorphism of algebras. Also, as above, if $H$ is the orthogonal
complement to $\left(  S(\mathfrak{a)a}\right)  ^{W}S(\mathfrak{a)}$ in
$S(\mathfrak{a)}$. Then the map%
\[
S(\mathfrak{a})^{W}\otimes H\rightarrow S(\mathfrak{a})
\]
given by multiplication is a linear bijection. Putting these observations
together the map%
\[
S(\mathfrak{n)\otimes}S(\mathfrak{a})^{W}\otimes H\otimes S(\mathfrak{k}%
)\rightarrow S(\mathfrak{g})
\]
given by multiplication is a linear bijection. We also note that the map%
\[
\mathbb{C[}w_{1},...,w_{l}]\otimes E\rightarrow S(\mathfrak{a})^{W}%
\]
given by%
\[
w\otimes e\mapsto q(p(w))q(e)
\]
is a linear bijection. This in turn implies

\begin{lemma}
The map%
\[
S(\mathfrak{n)\otimes}\mathbb{C[}w_{1},...,w_{l}]\otimes E\otimes H\otimes
S(\mathfrak{k})\rightarrow S(\mathfrak{g})
\]
given by multiplication is a linear bijection.
\end{lemma}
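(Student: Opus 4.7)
The plan is to prove bijectivity by passing to a suitable associated graded in which the multiplication map factors as the tensor product of three already-established linear bijections.

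First I would use the vector-space decomposition $\mathfrak{g}=p(\mathfrak{n})\oplus\mathfrak{a}\oplus\mathfrak{k}$, which comes from $\mathfrak{p}=p(\mathfrak{n})\oplus\mathfrak{a}$ together with $\mathfrak{g}=\mathfrak{p}\oplus\mathfrak{k}$, to identify $S(\mathfrak{g})$ with $S(p(\mathfrak{n}))\otimes S(\mathfrak{a})\otimes S(\mathfrak{k})$ as an algebra. Endow $S(\mathfrak{g})$ with the algebra grading that assigns weight $2$ to $\mathfrak{a}$, weight $1$ to $p(\mathfrak{n})$, and weight $0$ to $\mathfrak{k}$, and grade the source so that $n\otimes w\otimes e\otimes h\otimes k$ with homogeneous factors of polynomial degrees $d_n,d_w,d_e,d_h,d_k$ carries weight $d_n+2(d_w+d_e+d_h)$.

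The key step is to identify the top-weight component of each factor. For $X\in\mathfrak{n}$ the $\mathrm{ad}(H)$-eigenvalue is nonzero, so $X$ has no $\mathfrak{a}$-component and $X=p(X)+X_{\mathfrak{k}}$ with $p(X)\in p(\mathfrak{n})$ and $X_{\mathfrak{k}}\in\mathfrak{k}$; hence $n\in S^{d_n}(\mathfrak{n})$ has top weight $d_n$, with leading part $\bar n\in S^{d_n}(p(\mathfrak{n}))$ the image under the algebra isomorphism $S(\mathfrak{n})\to S(p(\mathfrak{n}))$ induced by $p|_{\mathfrak{n}}$. The relations $w-p(w)\in S(\mathfrak{g})\mathfrak{k}$ and $p(w)-q(p(w))\in S(\mathfrak{p})p(\mathfrak{n})$ each strictly drop the weight (the first trades a $\mathfrak{p}$-factor for a $\mathfrak{k}$-factor, the second an $\mathfrak{a}$-factor for a $p(\mathfrak{n})$-factor), so $w$ has top weight $2d_w$ with leading part $q(p(w))$, and the same argument gives leading part $q(e)$ for $e$; $h$ and $k$ are already pure in weight. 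Therefore the multiplication map is weight-filtered, with associated graded sending $n\otimes w\otimes e\otimes h\otimes k$ to $\bar n\cdot q(p(w))q(e)\cdot h\cdot k\in S(p(\mathfrak{n}))\otimes S(\mathfrak{a})\otimes S(\mathfrak{k})$.

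This associated graded is manifestly the tensor product of three linear bijections: $S(\mathfrak{n})\to S(p(\mathfrak{n}))$ induced by $p|_{\mathfrak{n}}$; the composition $\mathbb{C}[w_1,\ldots,w_l]\otimes E\otimes H\to S(\mathfrak{a})^W\otimes H\to S(\mathfrak{a})$ of the bijection $w\otimes e\mapsto q(p(w))q(e)$ displayed just above the statement with the harmonic decomposition of $S(\mathfrak{a})$ established earlier; and the identity on $S(\mathfrak{k})$. Bijectivity of the associated graded then forces bijectivity of the original multiplication map. The only delicate point will be the weight bookkeeping of the third paragraph—checking that each correction term truly sits strictly below the claimed leading weight—but this is immediate from the definitions of $p$ and $q$ and the chosen weight assignments.
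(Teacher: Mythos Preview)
Your proof is correct and is precisely the argument the paper has in mind but leaves implicit: the paper simply records the two ingredients---the multiplication bijection $S(\mathfrak{n})\otimes S(\mathfrak{a})^{W}\otimes H\otimes S(\mathfrak{k})\to S(\mathfrak{g})$ and the bijection $\mathbb{C}[w_{1},\dots,w_{l}]\otimes E\to S(\mathfrak{a})^{W}$, $w\otimes e\mapsto q(p(w))q(e)$---and then writes ``This in turn implies'' the lemma, suppressing exactly the triangular/filtration step you supply. Your weight grading (weights $1,2,0$ on $p(\mathfrak{n}),\mathfrak{a},\mathfrak{k}$) is a clean way to make that step rigorous, and the leading-term computations you give are correct, so the associated graded is indeed the tensor product of the three indicated bijections.
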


Let $\mathrm{symm}$ denote the symmetrization map from $S(\mathfrak{g}%
_{\mathbb{C}})$ to $U(\mathfrak{g}_{\mathbb{C}})$ then $\mathrm{symm}$ is a
linear bijection and $\mathrm{symm}\circ Ad(g)=Ad(g)\circ\mathrm{symm}$. Let
$Z(\mathfrak{g}_{\mathbb{C}})=U(\mathfrak{g}_{\mathbb{C}})^{G}$ denote the
center of $U(\mathfrak{g}_{\mathbb{C}})$. Set $z_{i}=\mathrm{symm}(w_{i}) $
and
\[
\mathbf{D}=\mathbb{C}[z_{1},...,z_{l}].
\]
Note that if $S_{j}(\mathfrak{g}_{\mathbb{C}})=\sum_{k\leq j}S^{j}%
(\mathfrak{g}_{\mathbb{C}})$ and if $U^{j}(\mathfrak{g}_{\mathbb{C}})\subset
U^{j+1}(\mathfrak{g}_{\mathbb{C}})$ is the standard filtration of
$U(\mathfrak{g}_{\mathbb{C}})$ then
\[
\mathrm{symm}(S_{j}(\mathfrak{g}_{\mathbb{C}}))=U^{j}(\mathfrak{g}%
_{\mathbb{C}}).
\]
The above and standard arguments (\cite{HOW} Theorem 2.5 and Lemma 5.2) imply

\begin{theorem}
\label{decompositions}Let the notation be as above. Then

1. The map
\[
\mathcal{H}\otimes E\otimes\mathbf{D}\otimes U(\mathfrak{k}_{\mathbb{C}%
})\rightarrow U(\mathfrak{g}_{\mathbb{C}})
\]
given by%
\[
h\otimes e\otimes D\otimes k\mapsto\mathrm{symm}(h)\mathrm{symm}(e)Dk
\]
is a linear bijection.

2. The map%
\[
U(\mathfrak{n}_{\mathbb{C}})\otimes E\otimes H\otimes\mathbf{D\otimes
U(}\mathfrak{k}_{\mathbb{C}})\rightarrow U(\mathfrak{g}_{\mathbb{C}})
\]
given by%
\[
n\otimes e\otimes h\otimes D\otimes k\mapsto n\mathrm{symm}(e)\mathrm{symm}%
(h)Dk
\]
is a linear bijection.
\end{theorem}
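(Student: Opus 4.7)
The plan is to reduce both parts of Theorem \ref{decompositions} to the symmetric algebra statements of the preceding two lemmas by means of the standard filtration on $U(\mathfrak{g}_{\mathbb{C}})$ and the identity $\mathrm{symm}(S_{j}(\mathfrak{g}_{\mathbb{C}})) = U^{j}(\mathfrak{g}_{\mathbb{C}})$ recorded just before the theorem statement.

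As a preliminary I would check that $\mathbf{D}$ is genuinely a polynomial algebra on $z_{1},\ldots,z_{l}$. Since $p$ is an algebra homomorphism and $p(w_{1}),\ldots,p(w_{l})$ are algebraically independent by property 1, the $w_{i}$ are themselves algebraically independent in $S(\mathfrak{g}_{\mathbb{C}})$. Because $\mathrm{symm}$ is degree--preserving in the sense just mentioned, the leading symbol of $z_{i}=\mathrm{symm}(w_{i})$ in $U^{\deg w_{i}}/U^{\deg w_{i}-1}$ is $w_{i}$, so the commuting central elements $z_{1},\ldots,z_{l}$ are algebraically independent. Give $\mathbf{D}$ the induced filtration with $\deg z_{i}=\deg w_{i}$; its associated graded is $\mathbb{C}[w_{1},\ldots,w_{l}]$, and for any monomial one has $\mathrm{gr}(z_{i_{1}}\cdots z_{i_{k}})=w_{i_{1}}\cdots w_{i_{k}}$ (this is unambiguous because the $w_{i}$ commute in $S(\mathfrak{g}_{\mathbb{C}})$).

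For part 1, equip $\mathcal{H}\otimes E\otimes\mathbf{D}\otimes U(\mathfrak{k}_{\mathbb{C}})$ with the total tensor--product filtration built from the gradings of $\mathcal{H}$ and $E$, the filtration of $\mathbf{D}$ just defined, and the PBW filtration of $U(\mathfrak{k}_{\mathbb{C}})$. The multiplication map $\mu$ carries the level--$n$ subspace into $U^{n}(\mathfrak{g}_{\mathbb{C}})$ because each tensor factor is sent into the appropriate level of $U(\mathfrak{g}_{\mathbb{C}})$. Passing to associated gradeds and using the PBW identification $\mathrm{gr}\,U(\mathfrak{g}_{\mathbb{C}})=S(\mathfrak{g}_{\mathbb{C}})$, together with the fact that the leading symbols of $\mathrm{symm}(h)$, $\mathrm{symm}(e)$, and $z_{i_{1}}\cdots z_{i_{k}}$ are $h$, $e$, and $w_{i_{1}}\cdots w_{i_{k}}$ respectively, $\mathrm{gr}\,\mu$ is precisely the multiplication map
\[
\mathcal{H}\otimes E\otimes\mathbb{C}[w_{1},\ldots,w_{l}]\otimes S(\mathfrak{k}_{\mathbb{C}})\to S(\mathfrak{g}_{\mathbb{C}})
\]
of Lemma 1, which is a bijection. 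Since both filtrations are exhaustive and bounded below by zero, the standard filtered--to--graded principle then upgrades the bijectivity of $\mathrm{gr}\,\mu$ to bijectivity of $\mu$. Part 2 is proved by the identical argument, replacing Lemma 1 by Lemma 2 and using $\mathrm{gr}\,U(\mathfrak{n}_{\mathbb{C}})=S(\mathfrak{n}_{\mathbb{C}})$.

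I do not expect any serious obstacle; the only point demanding a little care is the identification of the associated graded of the left--hand tensor product with the symmetric algebra tensor product appearing in the relevant lemma, and specifically that the filtration on $\mathbf{D}$ is compatible with $\mathrm{symm}$ on monomials in the $w_{i}$. This reduces, as noted, to the commutativity of the $z_{i}$ in $\mathbf{D}$ and the identity $\mathrm{symm}(S_{j})=U^{j}$, both of which are already in hand.
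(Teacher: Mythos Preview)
Your proposal is correct and follows exactly the route the paper has in mind: the paper does not spell out a proof but simply writes ``The above and standard arguments (\cite{HOW} Theorem 2.5 and Lemma 5.2) imply'' the theorem, the ``above'' being Lemmas 1 and 2 together with the identity $\mathrm{symm}(S_{j}(\mathfrak{g}_{\mathbb{C}}))=U^{j}(\mathfrak{g}_{\mathbb{C}})$. Your filtration/associated--graded argument is precisely that standard argument made explicit.
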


\section{A class of admissible finitely generated $(\mathfrak{g},K)$--modules}

Retain the notation in the preceding section. Note that Theorem
\ref{decompositions} implies that the subalgebra $\mathbf{D}U(\mathfrak{k}%
_{\mathbb{C}})$of $U(\mathfrak{g}_{\mathbb{C}})$ is isomorphic with the tensor
product algebra $\mathbf{D}\otimes U(\mathfrak{k}_{\mathbb{C}})$ and that
$U(\mathfrak{g}_{\mathbb{C}})$ is free as a right $\mathbf{D}U(\mathfrak{k}%
_{\mathbb{C}})$ under multiplication. If $R$ is a $\mathbf{D}U(\mathfrak{k}%
_{\mathbb{C}})$--module then form
\[
J(R)=U(\mathfrak{g}_{\mathbb{C}})\otimes_{\mathbf{D}U(\mathfrak{k}%
_{\mathbb{C}})}R.
\]

Denote by $H(\mathfrak{g},K)$ the Harish--Chandra category of admissible
finitely generated $(\mathfrak{g},K)$--modules. Let $R$ be a finite
dimensional continuous $K$--module that is also a $\mathbf{D}$--module and the
actions commute then $K$ acts on $J(R)$ as follows:%
\[
k\cdot\left(  g\otimes r\right)  =Ad(k)g\otimes kr,k\in K,g\in U(\mathfrak{g}%
_{\mathbb{C}}),r\in R.
\]
Then as a $K$--module
\[
J(R)\cong\mathcal{H}\otimes E\otimes R
\]
with $K$ acting trivially on $E$. Note that $J(R)\in H(\mathfrak{g},K)$ since
the multiplicities of $K$--types in $\mathcal{H}$ are finite and $J(R)$ is
clearly finitely generated as a $U(\mathfrak{g}_{\mathbb{C}})$--module. Let
$W(\mathbf{D,}K)$ be the category of finite dimensional $(\mathbf{D}%
,K)$--modules with $K$ acting continuously and the action of $\mathbf{D}$ and
$K$ commute.

\begin{lemma}
$R\rightarrow J(R)$ defines an exact faithful functor from the category
$W(K,\mathbf{D})$ to $H(\mathfrak{g},K)$.
\end{lemma}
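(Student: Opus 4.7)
The plan is to read everything off of Theorem \ref{decompositions}. Part 1 of that theorem says precisely that multiplication gives a linear bijection $\mathcal{H}\otimes E\otimes \mathbf{D}\otimes U(\mathfrak{k}_{\mathbb{C}})\to U(\mathfrak{g}_{\mathbb{C}})$, and since $\mathbf{D}$ commutes with $U(\mathfrak{k}_{\mathbb{C}})$ and with itself, this means that $U(\mathfrak{g}_{\mathbb{C}})$ is free as a right $\mathbf{D}U(\mathfrak{k}_{\mathbb{C}})$--module, with basis $\mathrm{symm}(\mathcal{H})\mathrm{symm}(E)$. This is the only nontrivial input needed; the rest is bookkeeping.

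First I would check functoriality: if $\phi\colon R\to R'$ is a morphism in $W(\mathbf{D},K)$ (so $\phi$ intertwines both the $\mathbf{D}$ and $K$ actions), then $g\otimes r\mapsto g\otimes \phi(r)$ is well defined on $J(R)$ because $\phi$ is $\mathbf{D}U(\mathfrak{k}_{\mathbb{C}})$--linear, and it is $(\mathfrak{g},K)$--equivariant by construction. The verification that $J(R)\in H(\mathfrak{g},K)$ has already been given in the paragraph preceding the lemma: as a $K$--module $J(R)\cong \mathcal{H}\otimes E\otimes R$, and the $K$--types of $\mathcal{H}$ have finite multiplicities (Kostant--Rallis), while $E$ and $R$ are finite--dimensional, so $J(R)$ is admissible; it is finitely generated over $U(\mathfrak{g}_{\mathbb{C}})$ by the finite--dimensional subspace $1\otimes R$.

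For exactness, let $0\to R_{1}\to R_{2}\to R_{3}\to 0$ be short exact in $W(\mathbf{D},K)$. Because $U(\mathfrak{g}_{\mathbb{C}})$ is a free (hence flat) right $\mathbf{D}U(\mathfrak{k}_{\mathbb{C}})$--module by the observation above, the functor $U(\mathfrak{g}_{\mathbb{C}})\otimes_{\mathbf{D}U(\mathfrak{k}_{\mathbb{C}})}(-)$ preserves the short exact sequence, and the resulting sequence is visibly a sequence in $H(\mathfrak{g},K)$.

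For faithfulness, note that freeness also implies that the canonical map $R\hookrightarrow J(R)$, $r\mapsto 1\otimes r$, is injective (it is the inclusion of the summand corresponding to the basis element $1\in \mathrm{symm}(\mathcal{H})\mathrm{symm}(E)$). If $\phi\colon R\to R'$ is a morphism with $J(\phi)=0$ then $0=J(\phi)(1\otimes r)=1\otimes \phi(r)$ for every $r$, and injectivity of $R'\hookrightarrow J(R')$ forces $\phi=0$. Thus $\mathrm{Hom}(R,R')\to\mathrm{Hom}(J(R),J(R'))$ is injective, which is what faithfulness means. No step presents a real obstacle; the content is entirely contained in Theorem \ref{decompositions}.
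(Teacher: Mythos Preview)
Your proof is correct and follows the same approach as the paper: the paper's proof is the single sentence ``This follows since $U(\mathfrak{g}_{\mathbb{C}})$ is free as a module for $\mathbf{D}U(\mathfrak{k}_{\mathbb{C}})$ under right multiplication,'' and you have simply unpacked what that freeness buys (flatness for exactness, the splitting $R\hookrightarrow J(R)$ for faithfulness).
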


\begin{proof}
This follows since $U(\mathfrak{g}_{\mathbb{C}})$ is free as a module for
$\mathbf{D}U(\mathfrak{k}_{\mathbb{C}})$ under right multiplication.
\end{proof}

As usual, denote the set of equivalence classes of irreducible, finite
dimensional, continuous representations of $K$ by $\hat{K}$. If $V\in
H(\mathfrak{g},K)$ set $V(\gamma)$ equal to the sum of all irreducible
$K$--subrepresentations of $V$ in the class of $\gamma$. Then $V(\gamma)$ is
invariant under the action of $Z(\mathfrak{g}_{\mathbb{C}})$ hence under the
action of $\mathbf{D}$.

If $V\in H(\mathfrak{g},K)$ there is a finite subset $F\subset\hat{K}$ such
that
\[
U(\mathfrak{g}_{\mathbb{C}})\sum_{\gamma\in F}V(\gamma).
\]
Set $R=\sum_{\gamma\in F}V(\gamma)\in W(\mathbf{D,}K)$ and one has the
canonical $(\mathfrak{g},K)$--module surjection $J(R)\rightarrow V$ given by
$g\otimes r\mapsto gr.$ A submodule of an element of $H(\mathfrak{g},K)$ is in
$H(\mathfrak{g},K)$ so

\begin{proposition}
If $V\in H(\mathfrak{g},K)$ then there exists a sequence of elements $R_{j}\in
W(\mathfrak{g},K)$ and an exact sequence in $H(\mathfrak{g},K)\in\in$%
\[
...\rightarrow J(R_{k})\rightarrow....\rightarrow J(R_{2})\rightarrow
J(R_{1})\rightarrow J(R_{0})\rightarrow V\rightarrow0.
\]

\end{proposition}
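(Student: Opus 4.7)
The plan is a standard iterative construction, using the surjection $J(R)\twoheadrightarrow V$ already built in the paragraph preceding the proposition as the engine, and exploiting that $H(\mathfrak{g},K)$ is closed under submodules.

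First, given $V\in H(\mathfrak{g},K)$, I would apply the construction just before the statement: choose a finite $F_{0}\subset\hat{K}$ so that $\sum_{\gamma\in F_{0}}V(\gamma)$ generates $V$ as a $U(\mathfrak{g}_{\mathbb{C}})$--module, set $R_{0}=\sum_{\gamma\in F_{0}}V(\gamma)\in W(\mathbf{D},K)$, and take the canonical surjection $\varphi_{0}\colon J(R_{0})\rightarrow V$, $g\otimes r\mapsto gr$.

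Next I would induct. Suppose I have built
\[
J(R_{k})\xrightarrow{\varphi_{k}} J(R_{k-1})\rightarrow\cdots\rightarrow J(R_{0})\xrightarrow{\varphi_{0}} V\rightarrow 0
\]
which is exact. Let $V_{k+1}=\ker\varphi_{k}\subset J(R_{k})$. Because $J(R_{k})\in H(\mathfrak{g},K)$ (as observed in the paragraph defining $J(R)$) and $H(\mathfrak{g},K)$ is closed under taking submodules (stated explicitly in the text just before the proposition; this rests on Noetherianity of $U(\mathfrak{g}_{\mathbb{C}})$ for finite generation and on inheritance of admissibility), we have $V_{k+1}\in H(\mathfrak{g},K)$. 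Apply the construction of the first step to $V_{k+1}$ in place of $V$: choose a finite $F_{k+1}\subset\hat{K}$ with $R_{k+1}=\sum_{\gamma\in F_{k+1}}V_{k+1}(\gamma)$ generating $V_{k+1}$, and let $\psi_{k+1}\colon J(R_{k+1})\twoheadrightarrow V_{k+1}$ be the canonical surjection. Define $\varphi_{k+1}$ to be the composition of $\psi_{k+1}$ with the inclusion $V_{k+1}\hookrightarrow J(R_{k})$. Then by construction $\mathrm{im}\,\varphi_{k+1}=V_{k+1}=\ker\varphi_{k}$, which is exactness at $J(R_{k})$.

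Exactness at $V$ is the surjectivity of $\varphi_{0}$, and $\varphi_{k+1}$ is well-defined because $R_{k+1}$ is an actual submodule of $V_{k+1}$ (so the $\mathbf{D}$-- and $K$--actions on $R_{k+1}$ come from $J(R_{k})$, making the composite a genuine $(\mathfrak{g},K)$--map). Iterating produces the desired infinite resolution. There is no real obstacle: the only substantive input is the stated fact that submodules of objects in $H(\mathfrak{g},K)$ remain in $H(\mathfrak{g},K)$, which is what allows the induction to continue indefinitely; everything else is bookkeeping for the surjection constructed earlier.
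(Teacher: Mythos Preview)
Your argument is correct and is exactly the approach the paper takes: the paragraph immediately preceding the proposition builds the surjection $J(R_{0})\twoheadrightarrow V$ and notes that submodules of objects in $H(\mathfrak{g},K)$ remain in $H(\mathfrak{g},K)$, after which the proposition is stated as the evident consequence. You have simply spelled out the induction that the paper leaves implicit.
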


Notice that this exact sequence us a free resolution of $V$ as a
$U(\mathfrak{n})$--module.

Let $\beta:\mathbf{D}\rightarrow\mathbb{C}$ be an algebra homomorphism. Let
$H(\mathfrak{g},K)_{\beta}$ be the full subcategory of $H(\mathfrak{g},K)$
consisting of modules $V$ such that if $z\in\mathbf{D}$ then it acts by
$\beta(z)I$. The next result is an aside that will not be used in the rest of
this paper and is a simple consequence of the definition of projective object.

\begin{lemma}
Let $F$ be a finite dimensional $K$--module and let $\mathbf{D}$ act on $F$ by
$\beta(z)I$ yielding an object $R\in W(K,\mathbf{D})$. Then $J(R)$ is
projective in $H(\mathfrak{g},K)_{\beta}$.
\end{lemma}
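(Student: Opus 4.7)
The plan is to establish projectivity by the standard tensor-hom adjunction together with semisimplicity of the category of finite-dimensional continuous $K$-modules.

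First I would set up the adjointness. For any $V\in H(\mathfrak{g},K)_{\beta}$, the universal property of the relative tensor product $J(R)=U(\mathfrak{g}_{\mathbb{C}})\otimes_{\mathbf{D}U(\mathfrak{k}_{\mathbb{C}})}R$ gives a natural isomorphism
\[
\mathrm{Hom}_{(\mathfrak{g},K)}(J(R),V)\;\cong\;\mathrm{Hom}_{(\mathbf{D}U(\mathfrak{k}_{\mathbb{C}}),K)}(R,V),
\]
where on the right we take $K$-equivariant maps that also commute with the $\mathbf{D}U(\mathfrak{k}_{\mathbb{C}})$ action. Since the $K$-action differentiates to the action of $\mathfrak{k}_{\mathbb{C}}$ on both $R$ and $V$, $K$-equivariance subsumes $U(\mathfrak{k}_{\mathbb{C}})$-linearity. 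Because $V\in H(\mathfrak{g},K)_{\beta}$ and $R\in W(\mathbf{D},K)$ both have $\mathbf{D}$ acting by the character $\beta$, every $K$-equivariant linear map $R\to V$ automatically commutes with $\mathbf{D}$. Hence
\[
\mathrm{Hom}_{(\mathfrak{g},K)}(J(R),V)\;\cong\;\mathrm{Hom}_{K}(R,V).
\]
Here it is essential that $V$ already lies in $H(\mathfrak{g},K)_{\beta}$, so one has to emphasize that the adjunction is formed within the subcategory, not the full category $H(\mathfrak{g},K)$.

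Next I would verify that the functor $V\mapsto\mathrm{Hom}_{K}(R,V)$ is exact on $H(\mathfrak{g},K)_{\beta}$. The key point is that $K$ is compact, so every continuous $K$-module is completely reducible; hence $\mathrm{Hom}_{K}(R,\cdot)$ is exact on the category of $K$-modules, and in particular on $H(\mathfrak{g},K)_{\beta}$ (forgetting all structure beyond the $K$-action). Composing with the natural isomorphism above, the functor $\mathrm{Hom}_{(\mathfrak{g},K)}(J(R),\cdot)$ is exact on $H(\mathfrak{g},K)_{\beta}$, which is the definition of $J(R)$ being projective in that category.

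There isn't really a serious obstacle here; the content is packaged in two inputs that were already established or are standard. The first is that $U(\mathfrak{g}_{\mathbb{C}})$ is free (in particular flat) as a right $\mathbf{D}U(\mathfrak{k}_{\mathbb{C}})$-module, which is a consequence of Theorem \ref{decompositions} and makes the tensor-hom adjunction behave cleanly. The second is Peter--Weyl / complete reducibility for the compact group $K$. The only mild care needed is to notice that restricting to the subcategory $H(\mathfrak{g},K)_{\beta}$ is exactly what kills the $\mathbf{D}$-module condition on the right-hand side of the adjunction and reduces it to a $K$-Hom, and this is the step where the hypothesis that $\mathbf{D}$ acts on $R$ via $\beta$ is used.
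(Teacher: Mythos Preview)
Your proof is correct and matches what the paper has in mind: the paper does not spell out a proof but simply remarks that the lemma ``is a simple consequence of the definition of projective object,'' and your argument via the tensor--hom adjunction $\mathrm{Hom}_{(\mathfrak{g},K)}(J(R),V)\cong\mathrm{Hom}_{K}(R,V)$ for $V\in H(\mathfrak{g},K)_{\beta}$, followed by exactness of $\mathrm{Hom}_{K}(R,\cdot)$ from complete reducibility of $K$-modules, is exactly that simple verification.
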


\section{The objects in $W(K,\mathbf{D)}$}

If $R\in W(K,\mathbf{D)}$ then $R$ has an isotypic decomposition
$R=\oplus_{\gamma\in\hat{K}}R(\gamma)$. Only a finite number of the
$R(\gamma)\neq0$. If $D\in\mathbf{D\ }$then $DR(\gamma)\subset R(\gamma)$ for
all $\gamma\in\hat{K}$. If $\chi:\mathbf{D}\rightarrow\mathbb{C}$ is an
algebra homomorphism then we set $R_{\chi}=\{v\in R|(D-\chi(D))^{k}v=0,$for
some $k>0\}$ \ Then setting $ch(\mathbf{D})$ equal to the set of all algebra
homomorphisms of $\mathbf{D}$ to $\mathbb{C}$ we have the decomposition%
\[
R=\bigoplus_{\gamma\in\hat{K},\chi\in ch(\mathbf{D})}R_{\chi}(\gamma).
\]
Fix a $K$--module $(\tau_{\gamma},F_{\gamma})\in\gamma$. Then $R_{\chi}%
(\gamma)$ is isomorphic with
\[
\mathrm{Hom}_{K}(V_{\gamma},R_{\chi})\otimes F_{\gamma}%
\]
with $K$ acting on $F_{\gamma}$ and $\mathbf{D}$ acting on $\mathrm{Hom}%
_{K}(V_{\gamma},R)$.

If $R$ is an irreducible object in $W(K,\mathbf{D)}$ then Schur's lemma
implies that $\mathbf{D}$ acts by a single homomorphism to $\mathbb{C}$ and
$R$ is irreducible as a $K$--module. Set $V_{\chi,}$ equal to the module with
$\mathbf{D}$ acting by $\chi$ and $K$ acting by an element of $\gamma$.

We next analyze the homomorphisms $\chi$. Let $\chi$ be such a homomorphism
then $\chi(z_{i})=\lambda_{i}\in\mathbb{C}$. Thus one simple parametrization
is by $(\lambda_{1}....,\lambda_{l})\in\mathbb{C}^{l}$. We use the notation
$\beta_{\lambda}$ for the homomorphism such that $\beta_{\lambda}%
(z_{i})=\lambda_{i}$. An alternate parametrization is through the
Harish-Chandra homomorphism. Recall the exact sequence (c.f. \cite{RRG1-11},
Theorem 3.6.6)%
\[%
\begin{array}
[c]{ccccccccc}
&  &  &  &  & \mathbf{\gamma} &  &  & \\
0 & \rightarrow & (U(\mathfrak{g}_{\mathbb{C}})\mathfrak{k}_{\mathbb{C}})^{K}
& \rightarrow & U(\mathfrak{g}_{\mathbb{C}})^{K} & \rightarrow &
U(\mathfrak{a}_{\mathbb{C}})^{W} & \rightarrow & 0.
\end{array}
\]
It is standard that the linear map $\mathbf{\gamma}\circ\mathrm{symm}%
:S(\mathfrak{p}_{\mathbb{C}})^{K}\rightarrow U(\mathfrak{a}_{\mathbb{C}})^{W}$
is a linear bijection. This and the definition of $\mathbf{D}$ imply that
$U(\mathfrak{a}_{\mathbb{C}})^{W}$ is finitely generated as a $\mathbf{\gamma
}(\mathbf{D})$--module. \ This in turn implies that $U(\mathfrak{a}%
_{\mathbb{C}})$ is finitely generated as a $\mathbf{\gamma}(\mathbf{D}%
)$--module. Thus we have a morphism $\varphi:\mathfrak{a}_{\mathbb{C}}^{\ast
}\rightarrow\mathbb{C}^{l}$ such that $\mathbf{\gamma}(\mathbf{D})(\nu)$ is
the homomorphism $z_{i}\mapsto\varphi_{i}(\nu)$. Hence $\mathbf{\gamma
}(\mathbf{D})(\nu)=\beta_{\varphi(\nu)}$ for $\nu\in\mathfrak{a}_{\mathbb{C}%
}^{\ast}$. Set $\chi_{\nu|_{\mathbf{D}}}=\beta_{\varphi(\nu)} $.

\begin{definition}
Let $X$ be a complex or real analytic manifold. An analytic family in
$W(K,\mathbf{D)}$ based on $X$ is a pair $(\mu,V)$ of a a finite dimensional
continuous $K$--module,$V$, and a $\mu:X\times\mathbf{D}\rightarrow
\mathrm{End}(V)$ such that $D\mapsto\mu(x,D)$ is a representation of
$\mathbf{D}$ on $V$ and $x\mapsto\mu(x,D)$ is analytic for all $D\in
\mathbf{D}$.
\end{definition}

\section{Analytic families of $J$--modules}

Throughout this section analytic will mean complex analytic in the context of
a complex analytic manifold and real analytic in the contest of a real
analytic manifold. Theorem \ref{decompositions} implies

\begin{corollary}
\label{covar}Let $R\in W(K,\mathbf{D})$ then
\[
J(R)/\mathfrak{n}^{k+1}J(R)\cong U(\mathfrak{n)/}\mathfrak{n}^{k+1}%
U(\mathfrak{n})\otimes E\otimes H\otimes R_{|M}%
\]
as an $(\mathfrak{n},M)$-module with $\mathfrak{n}$ and $M$ acting trivially
on $E\otimes H$ and $\mathfrak{n}$ acting trivially on $R$.
\end{corollary}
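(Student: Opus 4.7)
The plan is to read the result off directly from Theorem \ref{decompositions}(2) by interpreting the decomposition of $U(\mathfrak{g}_{\mathbb{C}})$ in terms of the left $\mathfrak{n}$-action and the $\mathrm{Ad}$-action of $M$, and then passing to the quotient by $\mathfrak{n}^{k+1}$.

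First I would recast Theorem \ref{decompositions}(2) as the statement that $U(\mathfrak{g}_{\mathbb{C}})$ is free as a right $\mathbf{D}U(\mathfrak{k}_{\mathbb{C}})$-module with basis indexed by (any basis of) $U(\mathfrak{n})\otimes E\otimes H$ via the map $n\otimes e\otimes h\mapsto n\,\mathrm{symm}(e)\mathrm{symm}(h)$. Tensoring over $\mathbf{D}U(\mathfrak{k}_{\mathbb{C}})$ with $R$ and using the definition $J(R)=U(\mathfrak{g}_{\mathbb{C}})\otimes_{\mathbf{D}U(\mathfrak{k}_{\mathbb{C}})}R$ produces a vector space isomorphism
\[
\Phi:U(\mathfrak{n})\otimes E\otimes H\otimes R\longrightarrow J(R),\qquad n\otimes e\otimes h\otimes r\mapsto n\,\mathrm{symm}(e)\mathrm{symm}(h)\otimes r.
\]
The key observation is that because the $U(\mathfrak{n})$-factor sits on the far left of the product, left multiplication by $X\in\mathfrak{n}$ simply sends $\Phi(n\otimes e\otimes h\otimes r)$ to $\Phi((Xn)\otimes e\otimes h\otimes r)$; no commutator with $\mathrm{symm}(e)\mathrm{symm}(h)$ appears. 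Hence $\Phi$ is an isomorphism of left $U(\mathfrak{n})$-modules, and quotienting both sides by $\mathfrak{n}^{k+1}$ yields the claimed isomorphism with the $\mathfrak{n}$-action localized on the first tensor factor.

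Next I would verify $M$-equivariance. Since $M$ normalizes $\mathfrak{a}$, it preserves each root space $\mathfrak{g}^{\lambda}$, so $\mathrm{Ad}(m)$ carries $U(\mathfrak{n})$ into itself. Because $\mathrm{symm}$ commutes with $\mathrm{Ad}$, because $E\subset S(\mathfrak{p}_{\mathbb{C}})^{K}$, and because $H\subset S(\mathfrak{a}_{\mathbb{C}})$ with $M$ centralizing $\mathfrak{a}$, we obtain $\mathrm{Ad}(m)\mathrm{symm}(e)=\mathrm{symm}(e)$ and $\mathrm{Ad}(m)\mathrm{symm}(h)=\mathrm{symm}(h)$ for every $m\in M$. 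Combining these, $m\cdot\Phi(n\otimes e\otimes h\otimes r)=\Phi(\mathrm{Ad}(m)n\otimes e\otimes h\otimes mr)$, which is exactly the claimed action of $M$ on $U(\mathfrak{n})/\mathfrak{n}^{k+1}U(\mathfrak{n})\otimes E\otimes H\otimes R_{|M}$, trivial on $E\otimes H$ and only through $\mathrm{Ad}$ respectively restriction on the remaining factors.

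The only mild obstacle is really bookkeeping in step one: one must check that the decomposition of Theorem \ref{decompositions}(2) places $U(\mathfrak{n})$ on the left so that the tensor product structure on the right-hand side is honestly compatible with left multiplication by $\mathfrak{n}$, and therefore with the filtration by powers $\mathfrak{n}^{k+1}$. Once that point is made, the $M$-equivariance and the triviality of the action on $E\otimes H$ follow from the standard properties of $\mathrm{symm}$ and from $M$ centralizing $\mathfrak{a}$.
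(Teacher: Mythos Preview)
Your proposal is correct and is precisely the argument the paper intends: the paper gives no proof beyond the line ``Theorem \ref{decompositions} implies,'' and your write-up simply unpacks that implication by reading off the free right $\mathbf{D}U(\mathfrak{k}_{\mathbb{C}})$-module structure from part (2), tensoring with $R$, and checking the $\mathfrak{n}$- and $M$-actions factor by factor. There is nothing to add.
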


Let $(\mu,V)$ be an analytic family of objects in $W(K,\mathbf{D)}$ based on
$X$. Let $V_{x},x\in X$ be the object in $W(K,\mathbf{D)}$ with $K$ acting by
its action on $V$ and $\mathbf{D}$ action by $\mu_{x}=\mu(x,\cdot)$.$.$

\begin{theorem}
\label{dependence}Notation as above. Let $\sigma_{k,x}$ be the action of
$\mathfrak{a}$ on
\[
U(\mathfrak{n)/}\mathfrak{n}^{k+1}U(\mathfrak{n})\otimes E\otimes H\otimes
V_{x|M}%
\]
under the identification
\[
J(V_{x})/\mathfrak{n}^{k+1}J(V_{x})\cong U(\mathfrak{n)/}\mathfrak{n}%
^{k+1}U(\mathfrak{n})\otimes E\otimes H\otimes V_{x|M}.
\]
If $u\in U(\mathfrak{g}_{\mathbb{C}})$ then the map $x\rightarrow\sigma
_{k,x}(u)$ is an analytic map.
\end{theorem}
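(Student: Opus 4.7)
The plan is to use the PBW-like factorization from Theorem \ref{decompositions}(2) to reduce the computation of $\sigma_{k,x}(u)$ to a finite combinatorial expression whose only $x$-dependent input is $\mu_x$.

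First I would fix an $x$-independent basis of the (finite-dimensional) target vector space
\[
U(\mathfrak{n})/\mathfrak{n}^{k+1}U(\mathfrak{n})\otimes E\otimes H\otimes V,
\]
obtained from a PBW basis of $U(\mathfrak{n})/\mathfrak{n}^{k+1}U(\mathfrak{n})$ together with fixed bases of the finite-dimensional spaces $E$, $H$ and $V$. It then suffices to show that each matrix entry of $\sigma_{k,x}(u)$ in this basis is analytic in $x$.

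Given a basis vector $n\otimes e\otimes h\otimes v$, which under the identification of Corollary \ref{covar} lifts to $n\,\mathrm{symm}(e)\,\mathrm{symm}(h)\otimes v\in J(V_x)$, I would compute the action by first forming the product $u\cdot n\,\mathrm{symm}(e)\,\mathrm{symm}(h)\in U(\mathfrak{g}_{\mathbb{C}})$ and then expanding it via Theorem \ref{decompositions}(2) as a finite sum
\[
u\cdot n\,\mathrm{symm}(e)\,\mathrm{symm}(h)=\sum_{j}n_{j}\,\mathrm{symm}(e_{j})\,\mathrm{symm}(h_{j})\,D_{j}k_{j},
\]
with $n_{j}\in U(\mathfrak{n})$, $e_{j}\in E$, $h_{j}\in H$, $D_{j}\in\mathbf{D}$, $k_{j}\in U(\mathfrak{k}_{\mathbb{C}})$. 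The essential point is that this straightening is determined by the algebra structure of $U(\mathfrak{g}_{\mathbb{C}})$ alone, so each $n_{j},e_{j},h_{j},D_{j},k_{j}$ is independent of $x$. Inside $J(V_x)=U(\mathfrak{g}_{\mathbb{C}})\otimes_{\mathbf{D}U(\mathfrak{k}_{\mathbb{C}})}V_{x}$ the subalgebra $\mathbf{D}U(\mathfrak{k}_{\mathbb{C}})$ crosses the tensor product, giving
\[
u\cdot(n\,\mathrm{symm}(e)\,\mathrm{symm}(h)\otimes v)=\sum_{j}n_{j}\,\mathrm{symm}(e_{j})\,\mathrm{symm}(h_{j})\otimes\mu_{x}(D_{j})k_{j}v.
\]
Projecting the $U(\mathfrak{n})$ factor onto the fixed quotient $U(\mathfrak{n})/\mathfrak{n}^{k+1}U(\mathfrak{n})$ and re-expanding in the chosen basis writes the matrix entries of $\sigma_{k,x}(u)$ as finite $\mathbb{C}$-linear combinations of matrix entries of the endomorphisms $\mu_{x}(D_{j})$ of $V$, with $x$-independent coefficients supplied by the differentiated $K$-action of the $k_j$'s on $V$ and by the straightening constants.

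Since $x\mapsto\mu_{x}(D)$ is analytic for every $D\in\mathbf{D}$ by the definition of an analytic family in $W(K,\mathbf{D})$, each matrix entry of $\sigma_{k,x}(u)$ is a polynomial expression in analytic functions of $x$, hence analytic. The main bookkeeping obstacle is verifying that the straightening in Theorem \ref{decompositions}(2) really does yield an $x$-independent finite expansion, and that reducing the $U(\mathfrak{n})$ factor modulo $\mathfrak{n}^{k+1}U(\mathfrak{n})$ is compatible with it; both points are automatic, the first because the decomposition in Theorem \ref{decompositions}(2) is a statement about the algebra $U(\mathfrak{g}_{\mathbb{C}})$ itself rather than about any particular module, and the second because $\mathfrak{n}^{k+1}U(\mathfrak{n})$ is a fixed subspace independent of $x$.
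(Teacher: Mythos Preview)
Your proposal is correct and follows essentially the same approach as the paper: both arguments use the PBW-type decomposition of Theorem~\ref{decompositions}(2) to expand $u$ times a basis element in the form $\sum n_j\,\mathrm{symm}(e_j)\,\mathrm{symm}(h_j)D_jk_j$ with $x$-independent structure constants, pass the $\mathbf{D}U(\mathfrak{k}_{\mathbb{C}})$ factors across the tensor product to obtain $\mu_x(D_j)k_jv$, and conclude analyticity from that of $x\mapsto\mu_x(D)$. Your write-up is in fact a bit more explicit about the quotient by $\mathfrak{n}^{k+1}U(\mathfrak{n})$ and about the role of the $k_j$'s acting on $V$, but the underlying idea is identical.
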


\begin{proof}
Theorem \ref{decompositions} implies that if $X_{1},...,X_{m}$ is a basis of
$\mathfrak{n}$, $Y_{1},...,Y_{n}$ is a basis of $\mathfrak{k}$ and
$h_{1},...,h_{r}$ is a basis for $\mathrm{symm}(E)\mathrm{symm}(H)$ then if
$I,J,L$ are multi-indices of size $m,n,l$ respectively then%
\[
X^{J}z^{L}h_{i}Y^{L}%
\]
is a basis of $U(\mathfrak{g}_{\mathbb{C}})$. Here, as usual, $X^{J}%
=X_{1}^{j_{1}}\cdots X_{m}^{j_{m}}$, ... This implies that if $u\in
U(\mathfrak{g}_{\mathbb{C}})$ then
\[
uX^{J}z^{L}h_{i}Y^{J}=\sum a_{I_{1}L_{1}i_{1}J_{1},ILiJ}(u)X^{J_{1}}z^{L_{1}%
}h_{i_{1}}Y^{J_{1}}.
\]
This implies that if we take a basis $v_{1},...,v_{d}$ of $V$ then the
elements $X^{J}h_{i}\otimes v_{j}$ form a basis of $J(V_{x})$. Thus if $u\in
U(\mathfrak{g}_{\mathbb{C}})$ then
\[
uX^{J}h_{i}\otimes v_{j}=\sum a_{I_{1}L_{1}i_{1}J_{1},I,0i0}(u)X^{J_{1}%
}z^{L_{1}}h_{i_{1}}Y^{J_{1}}\otimes v_{j}=
\]%
\[
\sum a_{I_{1}L_{1}i_{1}J_{1},I0i0}(u)X^{J_{1}}h_{i_{1}}\otimes\mu_{x}%
(z^{L_{1}})Y^{J_{1}}v_{j}=
\]
The theorem follows from this formula.
\end{proof}

If $X$ is a complex manifold or a real analytic manifold and $V$ is a vector
space over $\mathbb{C}$ then a map $\phi:X\rightarrow V$ is said to be
holomorphic, real analytic or continuous if for each $x\in X$ there exists a
open neighborhood, $U$, \ of $X$ such that if $Z=\mathrm{Span}_{\mathbb{C}%
}\{\phi(x)|x\in U\}$ then $\dim Z<\infty$ and $\phi:U\rightarrow Z$ is
holomorphic,real analytic or continuous respectively.

\begin{definition}
Let $X$ be a complex or real analytic manifold. Then an holomorphic,analytic
or continuous family of admissible $(\mathfrak{g},K)$--modules based on $X$ is
a pair, $(\mu,V)$, of an admissible $(\mathfrak{k},K)$--module, $V$, and
\[
\mu:X\times U(\mathfrak{g})\rightarrow\mathrm{End}(V)
\]
such that $x\mapsto\mu(x,y)v$ is holomorphic (resp. analytic, resp.
continuous) for all $y\in U(\mathfrak{g})$, $v\in V$ and if we set $\mu
_{x}(y)=\mu(x,y)$ for $y\in U(\mathfrak{g})$ then $(\mu_{x},V)$ is an
admissible finitely $(\mathfrak{g},K)$--module. It will be called a family of
objects in $H\mathcal{(}\mathfrak{g},K)$ if each $(\mu_{x},V)$ is finitely generated.
\end{definition}

\begin{theorem}
\label{J-family}Let $X$ be an analytic or complex manifold. Let $(\lambda,R) $
be an family of objects in $W(K,\mathbf{D})$ based on $X$ and define $R_{x}\in
W(K,\mathbf{D})$ to be the module with action $\lambda(x,\cdot)$. Let
\[
V=\mathcal{H}\otimes E\otimes R
\]
$(K$ act by the tensor product action with its action on $E$ trivial) and let
$T_{x}:V\rightarrow J(R_{x})$ be given by $T_{x}(h\otimes e\otimes
r)=\alpha_{x}(\mathrm{symm}(h)e)(1\otimes r)$\ with $\alpha_{x}$ the action of
$U(U(\mathfrak{g}_{\mathbb{C}})$ on $J(R_{x})$. If $\lambda(x,y)=T_{x}%
^{-1}\alpha_{x}(y)T_{x}$ then $(\lambda,V)$ is an analytic family of objects
in $H(\mathfrak{g},K)$ based on $X$.
\end{theorem}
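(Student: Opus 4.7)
The strategy is to write down an explicit formula for $\lambda(x,y)$ on the fixed vector space $V=\mathcal{H}\otimes E\otimes R$ and read off analyticity in $x$. The key input is Theorem \ref{decompositions}(1), which gives a global PBW-style basis of $U(\mathfrak{g}_{\mathbb{C}})$ of the form $\mathrm{symm}(h_\alpha)\mathrm{symm}(e_\beta)D_\gamma k_\delta$ with $\{h_\alpha\}\subset\mathcal{H}$, $\{e_\beta\}\subset E$, $\{D_\gamma\}\subset\mathbf{D}$, $\{k_\delta\}\subset U(\mathfrak{k}_{\mathbb{C}})$ bases of the respective factors; because this basis is adapted to the tensor decomposition, every $x$-dependence gets pushed onto the $\mathbf{D}$-action and the $K$-action on $R$, which are the only pieces that vary (analytically and trivially, respectively) with $x$.

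First I would verify that the $K$-module structure transported by $T_x$ agrees with the fixed structure on $V$ for all $x$: since $\mathrm{symm}$ is $\mathrm{Ad}(K)$-equivariant and $E\subset S(\mathfrak{p}_{\mathbb{C}})^K$, one checks directly that $T_x$ intertwines the $K$-action $k\cdot(h\otimes e\otimes r)=\mathrm{Ad}(k)h\otimes e\otimes kr$ on $V$ with the $K$-action on $J(R_x)$; this action is independent of $x$, admissible (by the finite $K$-multiplicities in $\mathcal{H}$), and $(\lambda_x,V)\cong J(R_x)\in H(\mathfrak{g},K)$, so admissibility and finite generation come for free. For the $\mathfrak{g}$-action, fix $y\in U(\mathfrak{g}_{\mathbb{C}})$ and expand
$$y\,\mathrm{symm}(h_\alpha)\mathrm{symm}(e_\beta)=\sum_{\alpha',\beta',\gamma,\delta}c_{\alpha'\beta'\gamma\delta}^{\alpha\beta}(y)\,\mathrm{symm}(h_{\alpha'})\mathrm{symm}(e_{\beta'})D_\gamma k_\delta$$
with scalars $c_{\alpha'\beta'\gamma\delta}^{\alpha\beta}(y)$ that are independent of $x$ and nonzero for only finitely many indices. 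Applying this equation to $1\otimes r\in J(R_x)$ and using the balanced tensor product relations $D\otimes r'=1\otimes\lambda(x,D)r'$ and $k\otimes r'=1\otimes kr'$ (for $D\in\mathbf{D}$, $k\in U(\mathfrak{k}_{\mathbb{C}})$), one obtains
$$\lambda(x,y)(h_\alpha\otimes e_\beta\otimes r)=\sum_{\alpha',\beta',\gamma,\delta}c_{\alpha'\beta'\gamma\delta}^{\alpha\beta}(y)\;h_{\alpha'}\otimes e_{\beta'}\otimes\lambda(x,D_\gamma)k_\delta r$$
after applying $T_x^{-1}$. The right-hand side is a finite linear combination whose only $x$-dependence is through the operators $\lambda(x,D_\gamma)\in\mathrm{End}(R)$, which are analytic in $x$ by hypothesis; the image lies in the fixed finite-dimensional subspace $\mathrm{span}\{h_{\alpha'}\otimes e_{\beta'}:c^{\alpha\beta}_{\alpha'\beta'\gamma\delta}(y)\neq 0\}\otimes R$ of $V$. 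This is precisely the analyticity required by the definition just preceding the theorem.

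The one place that requires any thought is verifying that the bijectivity of $T_x$ is uniform in $x$ and that one may indeed perform the re-expansion independently of $x$; both reduce to the $x$-free statement that $\mathrm{symm}(\mathcal{H})\mathrm{symm}(E)$ is a vector space complement to the left ideal $U(\mathfrak{g}_{\mathbb{C}})(\mathfrak{k}_{\mathbb{C}})+U(\mathfrak{g}_{\mathbb{C}})\mathbf{D}_{+}$, which is exactly Theorem \ref{decompositions}(1). I expect no serious obstacle: once the PBW decomposition is in hand the result is essentially a bookkeeping argument, and the conceptual content is the observation that the structure constants of $U(\mathfrak{g}_{\mathbb{C}})$ in the adapted basis are intrinsic scalars carrying no $x$-dependence, so all parameter dependence is funnelled into the single analytic map $x\mapsto\lambda(x,\cdot)$ on $\mathbf{D}$.
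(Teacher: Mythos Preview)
Your proposal is correct and follows essentially the same approach as the paper: expand $y\,\mathrm{symm}(h_\alpha)\mathrm{symm}(e_\beta)$ in the PBW-type basis of Theorem~\ref{decompositions}(1), push the $\mathbf{D}$- and $U(\mathfrak{k}_{\mathbb{C}})$-factors across the balanced tensor, and read off that the only $x$-dependence is through the analytic map $x\mapsto\lambda(x,D_\gamma)$. The paper's proof is a terser version of exactly this computation (it omits your verification of the $K$-structure and the uniform bijectivity of $T_x$, taking both for granted).
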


\begin{proof}
We argue as in the proof of Theorem \ref{dependence}. Let $\left\{
h_{i}\right\}  $ be a basis of $\mathcal{H}$ such that for each $i$ there
exists $\gamma\in\hat{K}$ such that $h_{i}\in\mathcal{H(\gamma)}$, let $e_{j}
$ be a basis of $E$, let $r_{m}$ be a basis of $R$ and let $Y_{1},...,Y_{n}$
be a basis of $\mathfrak{k}$. Then if $y\in U(\mathfrak{g}_{\mathbb{C}})$
\[
y\mathrm{symm}(h_{i})e_{j}z^{L}Y^{J}=\sum_{i_{1},j_{1},J_{1},L_{1}}%
b_{i_{1}j_{1}L_{1}J_{1},ijLK}(y)\mathrm{symm}(h_{i_{1}})e_{j_{1}}z^{J_{1}%
}Y^{K_{1}}.
\]
Thus%
\[
T_{x}^{-1}\alpha_{x}(y)T_{x}(h_{i}\otimes e_{j}\otimes r_{k})=\sum
b_{i_{1}j_{1}L_{1}J_{1},ij00}(y)h_{i_{1}}\otimes e_{j_{1}}\otimes\lambda
_{x}(z^{L_{1}})Y^{J_{1}}r_{k}.
\]
The theorem follows.
\end{proof}

Next we define another type of analytic family. Let $A$ and $N$ be the
connected subgroups of $G$ with $Lie(A)=\mathfrak{a}$ and $Lie(N)=\mathfrak{n}%
$. Let $M$ be the centralizer of $\mathfrak{a}$ in $K$. Set $Q=MAN$ then $Q$
is a minimal parabolic subgroup of $G$.

\begin{definition}
An analytic family of finite dimensional $Q$--modules based on the manifold
(real or complex analytic) $X$ is a pair $(\sigma,S)$ with $S$ a finite
dimensional continuous $M$--module and a real analytic map $\sigma:X\times
Q\rightarrow GL(S)$ such that $x\mapsto\sigma(x,q)$ is holomorphic and
$\sigma(x,\cdot)=\sigma_{x}$ is a representation of $Q$.
\end{definition}

Let $(\sigma,S)$ be a continuous representation of $Q$. Set $I^{\infty}%
(\sigma_{|M})$ equal to the space of all smooth $f:K\rightarrow S$ satisfying
$f(mk)=\sigma(m)f(k)$. Define and action $\pi_{\sigma}$ of $G$ on $I^{\infty
}(\sigma_{|M})$ as follows: if $f\in I^{\infty}(\sigma_{|M}) $ then extend $f$
to $G$ by $f_{\sigma}(qk)=\sigma(q)f(k)$, then, since $K\cap Q=M$ and $QK=G$,
$f_{\sigma}$ is $C^{\infty}$ on $G$ set $\pi_{\sigma}(g)f(k)=f_{\sigma}(kg)$.
Also set%
\[
\pi_{\sigma}(Y)f(k)=\frac{d}{dt}f_{\sigma}(k\exp tY)_{|t=0}%
\]
for $Y\in\mathfrak{g}$ and $k\in K,f\in I^{\infty}(\sigma_{|M})$. Let
$I(\sigma_{M})$ be the space of all right $K$ finite elements of $I^{\infty
}(\sigma_{|M})$

Put and $M$--invariant inner product, $\left\langle ...,...\right\rangle $ on
$S$. If $f,h\in I^{\infty}(\sigma_{|M})$ then set
\[
(f,h)=\int_{K}\left\langle f(k),h(k)\right\rangle dk
\]
with $dk$ normalized invariant measure on $K$. The following is standard.

\begin{proposition}
Let $(\sigma,S)$ be an analytic family of finite dimensional representations
of $Q$ based on the complex or real analytic manifold $X$. Set $\lambda
(x,y)=\pi_{\sigma_{x}}(y)$ for $x\in X,y\in U(\mathfrak{g}_{\mathbb{C}})$. If
$\mu$ is the common value of $\sigma_{x}|_{M}$, then $(\lambda,I(\mu))$ is an
analytic family of objects in $H(\mathfrak{g},K)$ based on $X$.
\end{proposition}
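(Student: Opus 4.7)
The plan is to verify the three clauses of the definition. Admissibility of $I(\mu)$ as a $(\mathfrak{k},K)$-module is immediate from Frobenius reciprocity, which gives $I(\mu)(\gamma)\cong V_\gamma\otimes \operatorname{Hom}_M(V_\gamma,S)$, finite dimensional since $S$ is. For each fixed $x\in X$, $(\pi_{\sigma_x},I(\mu))$ is the Harish-Chandra module of the smooth principal series induced from $\sigma_x$, and is classically known to be finitely generated and admissible as a $(\mathfrak{g},K)$-module. What requires argument is that $x\mapsto \pi_{\sigma_x}(y)v$ is analytic in the sense of the definition preceding the proposition, for every $y\in U(\mathfrak{g}_\mathbb{C})$ and $v\in I(\mu)$.

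The first sub-step is to exhibit a finite-dimensional subspace $Z\subset I(\mu)$ containing $\pi_{\sigma_x}(y)v$ for all $x$. By linearity one may assume $v\in I(\mu)(\gamma)$ for a single $\gamma\in\hat K$. Decomposing $y$ under the adjoint action of $K$ on $U(\mathfrak{g}_\mathbb{C})$ produces a finite expansion $y=\sum_\beta y_\beta$ independent of $x$, and each $\pi_{\sigma_x}(y_\beta)v$ lies in $\bigoplus_{\delta\in F}I(\mu)(\delta)$ for a finite set $F\subset\hat K$ depending only on $\beta$ and $\gamma$. Admissibility then yields the desired $Z$, valid globally in $x$.

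The second sub-step is to establish analyticity of $x\mapsto \pi_{\sigma_x}(y)v$ as a map into $Z$. This comes from an Iwasawa expansion: for $Y\in\mathfrak{g}$ and $k\in K$ decompose $\operatorname{Ad}(k)Y=Z_\mathfrak{n}(k)+Z_\mathfrak{a}(k)+Z_\mathfrak{k}(k)$ along $\mathfrak{g}=\mathfrak{n}\oplus\mathfrak{a}\oplus\mathfrak{k}$, and use $k\exp(tY)=\exp(t\operatorname{Ad}(k)Y)k$ together with $f_{\sigma_x}(qk')=\sigma_x(q)v(k')$ to obtain
\[
\pi_{\sigma_x}(Y)v(k)=d\sigma_x\bigl(Z_\mathfrak{n}(k)+Z_\mathfrak{a}(k)\bigr)v(k)+\left.\tfrac{d}{dt}\right|_{t=0}v\bigl(\exp(tZ_\mathfrak{k}(k))k\bigr).
\]
Iterating along a PBW basis of $U(\mathfrak{g}_\mathbb{C})$ expresses $\pi_{\sigma_x}(y)v$ as a finite sum $\sum_i d\sigma_x(D_i)w_i$ with $D_i\in U(\mathfrak{q}_\mathbb{C})$ and $w_i\in I(\mu)$ both independent of $x$. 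Since $x\mapsto \sigma_x$ is analytic, so is $d\sigma_x$ applied to fixed elements of $U(\mathfrak{q}_\mathbb{C})$, and therefore so is $x\mapsto \pi_{\sigma_x}(y)v$ as a map into $Z$.

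The main obstacle is the iterated Iwasawa bookkeeping needed to reach the closed form $\sum_i d\sigma_x(D_i)w_i$ and to see that each $D_i$ and $w_i$ is genuinely independent of $x$; the $K$-type argument of the first sub-step is precisely what confines the output to a fixed finite-dimensional subspace instead of letting it drift across infinitely many $K$-types as $x$ varies.
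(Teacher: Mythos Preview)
Your proof is correct. The paper's own proof is a one-line appeal to the fact that the matrix coefficient $(x,g)\mapsto(\pi_{\sigma_x}(g)f,h)$ is real analytic (and holomorphic in $x$ in the complex case) for $f,h\in I(\mu)$; differentiating in $g$ and letting $h$ run over $I(\mu)$ then yields analyticity of $x\mapsto d\pi_{\sigma_x}(y)f$ inside the fixed finite-dimensional $K$-isotypic target. Your argument instead computes $d\pi_{\sigma_x}(Y)$ directly via the Iwasawa factorization of $\operatorname{Ad}(k)Y$, so that the $x$-dependence is visibly concentrated in $d\sigma_x$. The two routes are equivalent in spirit---both ultimately reduce to the assumed analyticity of $\sigma_x$---but yours is self-contained and makes the finite-dimensional landing space $Z$ explicit via the $K$-type argument, whereas the paper simply declares the whole thing ``standard'' and leaves that bookkeeping implicit.
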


\begin{proof}
It is standard that
\[
x,g\mapsto(\pi_{\sigma_{x}}(g)f,h)
\]
is real analytic and holomorphic in $x$ for $f,g\in I(\mu)$.
\end{proof}

\begin{definition}
If $(\lambda,V)$ and $(\mu,W)$ are analytic families of objects in
$H(\mathfrak{g},K)$ based on the manifold $X$ then a homomorphism of the
analytic (resp. continuous) family $(\lambda,V)$ to $(\mu,W)$ is a map
\[
T:X\rightarrow\mathrm{Hom}_{\mathbb{C}}(V,W)
\]
such that

1. $x\mapsto T(x)v$ is an analytic map of $X$ to $W$ for all $v\in V.$

2. $T(x)\in\mathrm{Hom}_{H(\mathfrak{g},K)}(V_{x},W_{x})$ with $V_{x}%
=(\lambda_{x},V),W_{x}=(\mu_{x},W)$.
\end{definition}

If $R\in W(K,\mathbf{D})$ the space $J(R)/\mathfrak{n}^{s+1}J(R)$ has a
natural structure of an $M$ module and an $\mathfrak{n}+\mathfrak{a}$ module.
Since $\dim J(R)/\mathfrak{n}^{s+1}J(R)<\infty$ and $AN$ is a simply connected
Lie group $J(R)/\mathfrak{n}^{s+1}J(R)$ has a natural structure of a finite
dimensional continuous $Q$--module with action $\sigma_{s,R}$. Let $p_{s}$
denote the natural surjection%
\[
p_{s}:J(R)\rightarrow J(R)/\mathfrak{n}^{s+1}J(R).
\]
If $k\in K$, $v\in J(R)$, define $S_{s,R}(v)(k)=p_{s,R}(kv)$, then
$S_{s,R}(v)\in I(\sigma_{s,R}|_{M})$ and it is easily seen that $S_{s,R}%
\in\mathrm{Hom}_{H(\mathfrak{g},K)}(J(R),(\pi_{\sigma_{s,R}},I(\sigma
_{s,R}|_{M}))$. Combining the above results we have

\begin{theorem}
\label{J-morph}Let $(\mu,R)$ be an analytic (resp. continuous) family in
$W(K,\mathbf{D})$ based on the manifold $X$. Let $(\lambda,V)$ be the analytic
family (as in Theorem \ref{J-family}) corresponding to $x\rightarrow
J((\mu_{x},R))$. Then recalling that $V=\mathcal{H\otimes}E\otimes R$ define
$T_{s}(x)(h\otimes e\otimes r)=S_{s},_{R_{x}}(\mathrm{symm}(h)e\otimes r).$
Then $T_{s}$ defines a homomorphism of the analytic family $(\lambda,V)$ to
$(\xi,I(\sigma_{s,R_{x},}|_{M}))$ \ with $\xi(x,y)=\pi_{\sigma_{s,R_{x}}}(y)$
and $\sigma_{s,R_{x}}$ is defined as in Theorem \ref{dependence}.
\end{theorem}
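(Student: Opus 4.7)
The plan is to realise $T_s(x)$ as the composition of two previously-constructed maps, derive the $(\mathfrak{g},K)$-homomorphism property tautologically from that factorisation, and then establish analyticity of $x\mapsto T_s(x)(v)$ by combining Theorem~\ref{dependence} with the PBW identifications of Theorem~\ref{decompositions}. Specifically, the map $T_x\colon V\to J(R_x)$ produced in Theorem~\ref{J-family} intertwines $\lambda_x$ with the tautological action $\alpha_x$ by construction, and the discussion immediately preceding the theorem records that $S_{s,R_x}\colon J(R_x)\to I(\sigma_{s,R_x}|_M)$ is a $(\mathfrak{g},K)$-homomorphism. Unwinding the definitions, $T_s(x)$ is literally $S_{s,R_x}\circ T_x$, so it is a $(\mathfrak{g},K)$-homomorphism from $(\lambda_x,V)$ to $(\xi_x, I(\sigma_{s,R_x}|_M))$, settling the second condition in the definition of a homomorphism of analytic families.

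The key preliminary for analyticity is the observation that the target $I(\sigma_{s,R_x}|_M)$ is in fact independent of $x$ as a vector space. Indeed, Corollary~\ref{covar} furnishes an $(\mathfrak{n},M)$-identification
\[
J(R_x)/\mathfrak{n}^{s+1}J(R_x)\cong W:=U(\mathfrak{n})/\mathfrak{n}^{s+1}U(\mathfrak{n})\otimes E\otimes H\otimes R|_M
\]
in which the $M$-action is the adjoint action on the first three factors (with $M$ acting trivially on $E\otimes H$) together with the given $K$-action on $R$ restricted to $M$; none of these depend on $x$. Consequently $\sigma_{s,R_x}|_M$ is a fixed $M$-representation on $W$, and $I(\sigma_{s,R_x}|_M)$ is a fixed space of smooth $W$-valued functions on $K$.

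To verify that $x\mapsto T_s(x)(v)$ is analytic for each $v=h\otimes e\otimes r\in V$, I would compute
\[
T_s(x)(h\otimes e\otimes r)(k)=\sigma_{s,x}\bigl(\operatorname{Ad}(k)(\operatorname{symm}(h)e)\bigr)(1\otimes k r),
\]
by combining the $K$-action $k\cdot(g\otimes r')=\operatorname{Ad}(k)g\otimes kr'$ on $J(R_x)$ with the definition of $S_{s,R_x}$. As $k$ varies, $\operatorname{Ad}(k)(\operatorname{symm}(h)e)$ lies in the finite-dimensional $K$-span $F$ of $\operatorname{symm}(h)e$ in $U(\mathfrak{g}_{\mathbb{C}})$ and $kr$ lies in $R$. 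Picking bases $u_1,\dots,u_N$ of $F$ and $r_1,\dots,r_d$ of $R$, one expands
\[
T_s(x)(v)=\sum_{i,j}\phi_i\,\tau_j\cdot\sigma_{s,x}(u_i)(1\otimes r_j),
\]
where $\phi_i,\tau_j\in C^\infty(K)$ are independent of $x$ and each coefficient $\sigma_{s,x}(u_i)(1\otimes r_j)\in W$ depends analytically on $x$ by Theorem~\ref{dependence}. This exhibits $x\mapsto T_s(x)(v)$ as taking values in a single fixed finite-dimensional subspace of $I(\sigma_{s,R_x}|_M)$ with analytic coordinates, which matches the paper's definition of an analytic map into a possibly infinite-dimensional target.

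The one point requiring genuine attention is the independence of the target space from $x$, handled in the second paragraph; once that is in hand the remainder is a bookkeeping exercise built directly on Theorems~\ref{decompositions}, \ref{dependence}, and \ref{J-family}, and no new technical difficulty should arise.
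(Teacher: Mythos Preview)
Your proposal is correct and is essentially the argument the paper intends: the paper presents Theorem~\ref{J-morph} without a separate proof, introducing it with ``Combining the above results we have,'' and you have made that combination explicit by factoring $T_s(x)=S_{s,R_x}\circ T_x$, reading off the $(\mathfrak{g},K)$-intertwining property from the paragraph preceding the theorem, and deducing analyticity from Theorem~\ref{dependence} together with the $x$-independence of the underlying $M$-module $W$ furnished by Corollary~\ref{covar}. The only cosmetic point is that the paper's statement of Theorem~\ref{dependence} nominally introduces $\sigma_{k,x}$ as the action of $\mathfrak a$, but its proof (and every subsequent use) treats it as the full $U(\mathfrak g_{\mathbb C})$-action, so your appeal to $\sigma_{s,x}(u_i)$ for general $u_i$ is exactly what was proved there.
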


We will use the notation $J(R)$ for the analytic family associated with
$x\rightarrow J((\mu_{x},R))$.

\section{Some results of Vincent van der Noort}

Throughout this section $Z$ will denote a connected real or complex analytic
manifold. We will use the terminology analytic to mean complex analytic or
real analytic depending on the context.

We continue the notation of the previous sections. In particular $G$ is a real
reductive group of inner type.

We denote (as is usual) the standard filtration of $U(\mathfrak{g)}$, by
\[
...\subset U^{j}(\mathfrak{g)}\subset U^{j+1}(\mathfrak{g)\subset...}%
\]
Let $V$ be an admissible $(Lie(K),K)$ module. We note that if $E\subset V$ is
a finite dimensional $K$--invariant subspace of $V$ then there exists a finite
subset $F_{j,E}\subset\hat{K}$ such that
\[
U^{j}(\mathfrak{g})\otimes E\cong\sum_{\gamma\in F_{j,E}}m_{\gamma,j}%
V_{\gamma}.
\]
If $v\in V$ we denote by $E_{v}$ the span of $Kv$ in $V$.

The purpose of this section is to prove a theorem of van der Noort which first
appeared in his thesis \cite{VanderNoort}. Our argument follows his original
line with a few simplifications. We include the details only because he is not
expected to publish it. In his thesis he emphasized the holomorphic case.

Fix a maximal torus, $T$, of $M$ then $Lie(T)\oplus\mathfrak{a}$ is a Cartan
subalgebra of $\mathfrak{g}$. Set $\mathfrak{h}$ equal to its
complexification. We parametrize the homomorphisms of $Z(\mathfrak{g})$ to
$\mathbb{C}$ by $\chi_{\Lambda}$ for $\Lambda\in\mathfrak{h}^{\ast}$using the
Harish--Chandra parametrization. Endow $\hat{M}$ with the discrete topology.
Then we note that if $C$ is a compact subset of $\hat{M}\times\mathfrak{a}%
_{\mathbb{C}}^{\ast}$ then there exist a finite number of elements $\xi
_{1},...,\xi_{r}\in\hat{M}$ and compact subsets , $D_{j}$, of $\mathfrak{a}%
_{\mathbb{C}}^{\ast}$ such that%
\[
C=\cup_{j=1}^{r}\xi_{j}\times D_{j}\text{.}%
\]

If $\xi\in\hat{M}$ and $\nu\in\mathfrak{a}_{\mathbb{C}}^{\ast}$ then set
$\sigma_{\xi,\nu}(man)=\xi(m)a^{\nu+\rho}$ ($\rho(H)=\frac{1}{2}%
tr(adH_{|Lie(N)})$)$,$ $H\in\mathfrak{a}$), $a^{\nu}=\exp(\nu(H))$ $a=\exp
(H)$, $\xi$ is taken to be a representative of the class $\xi$. $H^{\xi,\nu}$
is $I(\sigma_{\xi.\nu})$ which equals as a $K$--module $H^{\xi}=Ind_{M}%
^{K}(\xi$\thinspace$)$. If $f\in H^{\xi}$ set $f_{\nu}(nak)=a^{\nu+\rho
}f(k),n\in N,a\in A,k\in K$. $A_{\bar{P}}(\nu)$ is the corresponding
Kunze-Stein intertwining operator (c.f. \cite{HarHomSP}, 8.10.18. p.241).

\begin{proposition}
\label{indfinite}Let $\xi\in\widehat{M}$ and let $\Omega\subset\mathfrak{a}%
_{\mathbb{C}}^{\ast}$ be open with compact closure. Then there exists
$F\subset\widehat{K}$ such that $\pi_{\xi,\nu}(U(\mathfrak{g}))\left(
\sum_{\gamma\in F}H^{\xi}(\gamma)\right)  =H^{\xi}$ for all $\nu\in\Omega$.
\end{proposition}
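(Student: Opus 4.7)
The plan is to argue pointwise and extend by a compactness argument. For each $\nu_{0}\in\overline{\Omega}$, since $H^{\xi,\nu_{0}}$ is admissible and finitely generated as a $(\mathfrak{g},K)$--module, there is a finite $F_{0}=F(\nu_{0})\subset\widehat{K}$ such that $V_{F_{0}}:=\sum_{\gamma\in F_{0}}H^{\xi}(\gamma)$ generates $H^{\xi,\nu_{0}}$ under $\pi_{\xi,\nu_{0}}(U(\mathfrak{g}))$. The heart of the proof is to show that this same $F_{0}$ continues to generate $H^{\xi,\nu}$ for all $\nu$ in some open neighborhood $U(\nu_{0})$ of $\nu_{0}$; compactness of $\overline{\Omega}$ then yields a finite subcover $U(\nu_{1}),\ldots,U(\nu_{r})$, and $F:=F(\nu_{1})\cup\cdots\cup F(\nu_{r})$ satisfies the proposition.

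To prove the local extension, I would equip $V_{F_{0}}$ with the analytic family of $(\mathbf{D},K)$-structures $R_{\nu}$ in which $\mathbf{D}$ acts by the scalar $\chi_{\nu|\mathbf{D}}=\beta_{\varphi(\nu)}$, matching the action of $\mathbf{D}$ on $H^{\xi,\nu}$. By Theorem \ref{J-family} this produces an analytic family $J(R_{\nu})$, and the rule $g\otimes r\mapsto\pi_{\xi,\nu}(g)r$ defines an analytic family of $(\mathfrak{g},K)$-morphisms $T(\nu):J(R_{\nu})\rightarrow H^{\xi,\nu}$ with $T(\nu_{0})$ surjective by the choice of $F_{0}$. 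I would then invoke Casselman's subrepresentation theorem in the form: a morphism between admissible finitely generated $(\mathfrak{g},K)$-modules is surjective if and only if the induced map on $\mathfrak{n}$-coinvariants is surjective (applied to the cokernel of $T(\nu)$). By Corollary \ref{covar} with $k=0$, the source $J(R_{\nu})/\mathfrak{n}J(R_{\nu})$ identifies with the fixed finite dimensional $M$-module $E\otimes H\otimes V_{F_{0}}|_{M}$, independent of $\nu$. The target $H^{\xi,\nu}/\mathfrak{n}H^{\xi,\nu}$ is identified with $V_{\xi}$ via the evaluation $f\mapsto f(1)$, using the computation $\pi_{\xi,\nu}(Y)f(1)=0$ for $Y\in\mathfrak{n}$ together with the Frobenius-type formula $(\mathrm{Ind}_{Q}^{G}\sigma)_{\mathfrak{n}}\cong\sigma|_{MA}$ for coinvariants of induced modules. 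Both identifications are therefore independent of $\nu$, so $T(\nu)_{*}$ becomes a family of linear maps between two fixed finite dimensional vector spaces depending analytically on $\nu$; surjectivity of $T(\nu_{0})_{*}$ then propagates to a neighborhood by the openness of the full-rank condition.

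The main obstacle is the finite dimensional reduction via Casselman's theorem: one must choose the correct nilpotent subalgebra so that Casselman's theorem gives the required biconditional, and verify the Frobenius-type identification of the target coinvariants with $V_{\xi}$ uniformly in $\nu$ (including at possibly reducible points where subquotients could a priori contribute extra coinvariants). Once this reduction is secured, the local extension step is finite dimensional linear algebra, and the standard finite covering argument on the compact set $\overline{\Omega}$ completes the proof.
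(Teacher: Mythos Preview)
Your global outline (local generation lemma at each $\nu_{0}$, then a finite subcover of $\overline{\Omega}$) matches the paper exactly; the difference is entirely in how the local lemma is proved.

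The gap is in your finite--dimensional reduction of the target. The ``Frobenius--type formula'' $(\mathrm{Ind}_{Q}^{G}\sigma)_{\mathfrak{n}}\cong\sigma|_{MA}$ is not correct when $\mathfrak{n}$ is the nilradical of the \emph{same} minimal parabolic $Q$ used for induction. The evaluation $f\mapsto f(1)$ does annihilate $\mathfrak{n}\cdot H^{\xi,\nu}$ and surjects onto $V_{\xi}$, but its kernel strictly contains $\mathfrak{n}\cdot H^{\xi,\nu}$: for generic $\nu$ the Jacquet module $H^{\xi,\nu}/\mathfrak{n}H^{\xi,\nu}$ has dimension $|W|\dim V_{\xi}$, with $\mathfrak{a}$--weights running over the $W$--orbit of $\nu$ (Bruhat filtration / Casselman's computation of Jacquet modules of principal series). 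So your identification of the target with the fixed space $V_{\xi}$ fails. Worse, even if one knew the dimension of $H^{\xi,\nu}/\mathfrak{n}H^{\xi,\nu}$ were constant, the subspace $\pi_{\xi,\nu}(\mathfrak{n})H^{\xi}\subset H^{\xi}$ genuinely moves with $\nu$, so there is no evident $\nu$--independent identification of the quotient that would let you view $T(\nu)_{\ast}$ as an analytic family of maps between fixed finite--dimensional spaces. Consequently the ``openness of full rank'' step does not apply as written. (There is also a minor slip: $\mathbf{D}$ acts on $H^{\xi,\nu}$ via the restriction of $\chi_{\Lambda_{\xi}+\nu}$, not $\chi_{\nu}$; this is harmless for analyticity but should be corrected.)

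The paper avoids coinvariants altogether. For $\operatorname{Re}(\nu,\alpha)>0$ it uses that non\-vanishing of the Kunze--Stein intertwining operator $A_{\bar P}(\nu)$ on a $K$--type $\gamma$ forces $H^{\xi}(\gamma)$ to generate, and the set where $A_{\bar P}(\nu)H^{\xi}(\gamma)\neq 0$ is open. For arbitrary $\nu$ it translates: choose $k$ with $\operatorname{Re}(\nu+k\rho,\alpha)>0$ and $k\rho$ dominant spherical, realize $H^{\xi,\nu}$ as a quotient of $H^{\xi,\nu+k\rho}\otimes V^{k\rho}$, and take for $F_{\nu}$ the $K$--types of $W_{\gamma_{\nu+k\rho}}\otimes V^{k\rho}$ that occur in $H^{\xi}$. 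This gives an explicit open $U_{\nu}$ and finite $F_{\nu}$ without any appeal to $\mathfrak{n}$--homology, after which the compactness argument is identical to yours.
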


The proof of this result will use the following lemma.

\begin{lemma}
If $\nu_{o}\in\mathfrak{a}_{\mathbb{C}}^{\ast}$ then there exists an open
neighborhood of $\nu_{o}$, $U_{\nu_{o}}$, and a finite subset $F=F_{\nu_{o}}$
of $\widehat{K}$ such that $\pi_{\xi,\nu}(U(\mathfrak{g}))\left(  \sum
_{\gamma\in F}H^{\xi}(\gamma)\right)  =H^{\xi}$ for all $\nu\in U_{\nu_{o}}$.
\end{lemma}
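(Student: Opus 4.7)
The plan is to combine finite generation of the Harish--Chandra module $(\pi_{\xi, \nu_o}, H^\xi)$ with a finite-length argument that reduces surjectivity to a check on only finitely many $K$-types. Since $H^{\xi, \nu_o}$ is a finitely generated admissible $(\mathfrak{g}, K)$-module, first pick a finite $F_0 \subset \widehat{K}$ so that $E_0 := \sum_{\gamma \in F_0} H^\xi(\gamma)$ generates $H^\xi$ under $\pi_{\xi, \nu_o}(U(\mathfrak{g}))$. Fix a small compact neighborhood $C$ of $\nu_o$; by the standard finiteness of composition-factor classes for families of minimal principal series over compact parameter sets, the irreducible $(\mathfrak{g}, K)$-modules appearing as composition factors of $H^{\xi, \nu}$ for some $\nu \in C$ form a finite set $\{X_1, \ldots, X_r\}$. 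For each $i$ pick $\delta_i \in \widehat{K}$ with $X_i(\delta_i) \neq 0$, and enlarge $F_0$ to $F := F_0 \cup \{\delta_1, \ldots, \delta_r\}$, setting $E := \sum_{\gamma \in F} H^\xi(\gamma)$.

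For analytic dependence, the Iwasawa-decomposition formula for $\pi_{\xi, \nu}$ shows that $\pi_{\xi, \nu}(Y)$ ($Y \in \mathfrak{g}$) is a first-order operator on $H^\xi$ whose $\nu$-dependence is affine (entering only through $\nu$ on the $\mathfrak{a}$-component of $\mathrm{Ad}(k^{-1})Y$), so $\pi_{\xi, \nu}(u)v$ depends polynomially on $\nu$ for each $u \in U(\mathfrak{g})$ and each $v$ in the fixed $K$-module $H^\xi$. For every $\delta \in F$ there is $j(\delta)$ with $\pi_{\xi, \nu_o}(U^{j(\delta)}(\mathfrak{g})) E_0 \supset H^\xi(\delta)$, and the induced family of linear maps between finite-dimensional spaces,
\[
\Phi_{\delta, \nu} : U^{j(\delta)}(\mathfrak{g}) \otimes E_0 \xrightarrow{\pi_{\xi, \nu}} H^\xi \twoheadrightarrow H^\xi(\delta),
\]
is analytic in $\nu$ with lower semi-continuous rank, hence surjective on an open neighborhood $U_\delta$ of $\nu_o$. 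Because $F$ is finite, $U_{\nu_o} := \bigcap_{\delta \in F} U_\delta$ is an open neighborhood of $\nu_o$.

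For $\nu \in U_{\nu_o}$, the submodule $N_\nu := \pi_{\xi, \nu}(U(\mathfrak{g})) E$ satisfies $N_\nu \supset H^\xi(\gamma)$ for every $\gamma \in F$; in particular $N_\nu(\delta_i) = H^\xi(\delta_i)$ for all $i$. If $N_\nu \neq H^\xi$, the nonzero finite-length quotient $H^\xi / N_\nu$ of $H^{\xi, \nu}$ admits some $X_i$ as a composition factor, so $(H^\xi / N_\nu)(\delta_i) \neq 0$, contradicting $N_\nu(\delta_i) = H^\xi(\delta_i)$. Hence $N_\nu = H^\xi$ for every $\nu \in U_{\nu_o}$. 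The main obstacle is justifying the finiteness of $\{X_1, \ldots, X_r\}$; I plan to do this using the Langlands--Vogan classification together with the compactness of the locus traced out by $\chi_\nu|_{\mathbf{D}}$ in $\mathrm{ch}(\mathbf{D})$ as $\nu$ runs over $C$, which confines composition factors to finitely many irreducible classes.
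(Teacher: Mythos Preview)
Your argument rests on the assertion that the irreducible subquotients of $H^{\xi,\nu}$, as $\nu$ ranges over a compact neighborhood $C$ of $\nu_o$, form a \emph{finite} set of isomorphism classes $\{X_1,\dots,X_r\}$. This is false. For generic $\nu$ the principal series $H^{\xi,\nu}$ is irreducible, and since distinct $\nu$ give distinct infinitesimal characters, the modules $H^{\xi,\nu}$ themselves, as $\nu$ ranges over a nonempty open subset of $C$, already contribute uncountably many non-isomorphic composition factors. Your proposed justification via compactness of $\{\chi_\nu|_{\mathbf D}:\nu\in C\}$ only bounds the infinitesimal characters to a compact set; it does not reduce the irreducibles to a finite list.

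What you actually need is the weaker statement that there is a finite $F\subset\widehat K$ such that every irreducible subquotient of every $H^{\xi,\nu}$ with $\nu\in C$ has a nonzero $\gamma$-isotype for some $\gamma\in F$. But that is essentially equivalent to the lemma itself (it is exactly the mechanism by which the paper deduces Theorem~\ref{loc-finite} \emph{from} this lemma and Proposition~\ref{indfinite}), so invoking it here is circular. The remaining steps in your argument---polynomial dependence of $\pi_{\xi,\nu}(u)$ on $\nu$, openness of surjectivity for the finite-dimensional maps $\Phi_{\delta,\nu}$, and the quotient argument---are fine, but they only reduce the problem to the unproven finiteness input.

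The paper's proof avoids this entirely. It first treats the case $\operatorname{Re}(\nu,\alpha)>0$ for all positive restricted roots $\alpha$ directly: there one can pick a single $K$-type $\gamma_\nu$ on which the Kunze--Stein intertwining operator $A_{\bar P}(\nu)$ is nonzero, and cyclicity of $H^{\xi}(\gamma_\nu)$ then follows from the structure of the image of $A_{\bar P}(\nu)$; nonvanishing persists on an open neighborhood. For arbitrary $\nu_o$ one shifts into the dominant range by tensoring with a finite-dimensional spherical representation $V^{k\rho}$, using that $H^{\xi,\nu_o}$ is a quotient of $H^{\xi,\nu_o+k\rho}\otimes V^{k\rho}$, and takes $F_{\nu_o}$ to be the $K$-types occurring in $W_{\gamma_{\nu_o+k\rho}}\otimes V^{k\rho}$ that also occur in $H^{\xi}$.
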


\begin{proof}
If $\gamma\in\hat{K}$ fix $W_{\gamma}\in\gamma$. If $\operatorname{Re}%
(\nu,\alpha)>0$ for all $\alpha\in\Phi^{+}$ and if $\gamma\in\widehat{K}$ and
$A_{\overline{P}}(\nu)H^{\xi}(\gamma)\neq0$ then $\pi_{\xi,\nu}(U(\mathfrak{g}%
))\left(  H^{\xi}(\gamma)\right)  =H^{\xi}$(c.f. \cite{RRG1-11}, Theorem 5.4.1
(1)). Fix such a $\gamma_{\nu}$ (which always exists since the operator
$A_{\overline{P}}(\nu)\neq0$), take $F_{\nu}=\{\gamma_{\nu}\}$ and $U_{\nu}$
an open neighborhood of $\nu$ such that $A_{\overline{P}}(\mu)H^{\xi}%
(\gamma_{\nu})\neq0$ for $\mu\in U$. Let $\nu\in\mathfrak{a}_{\mathbb{C}%
}^{\ast}$ be arbitrary. There exists a positive integer, $k$, such that
$\operatorname{Re}(\nu+k\rho,\alpha)>0$ for all $\alpha\in\Phi^{+}$ and such
that $k\rho$ is the highest weight of a finite dimensional spherical
representation, $V^{k\rho},$ of $G$ relative to $\mathfrak{a}$. The lowest
weight of $V^{k\rho}$ relative to $\mathfrak{a}$ is $-k\rho$ and $M$ acts
trivially on that weight space thus $H_{K}^{\xi,\nu+k\rho}\otimes V^{k\rho}$
has $H_{K}^{\xi,\nu}$ as a quotient representation (see \cite{HarHomSP}%
,8.5.14,15). Take $F_{\nu}$ to be the set of $K$--types that occur in both
$W_{\gamma_{\nu+k\rho}}\otimes V^{k\rho}$ and $H^{\xi}$ and $U_{\nu}%
=U_{\nu+k\rho}-k\rho$.
\end{proof}

We now prove the proposition. By the lemma above for each $\nu\in
\overline{\Omega}$ there exists $F_{\nu}$ and $U_{\nu}$ as in the statement of
the lemma. The $U_{\nu}$ form an open covering of $\overline{\Omega}$ which is
assumed to be compact. Thus there exist a finite number $\nu_{1},...,\nu
_{r}\in\overline{\Omega}$ such that
\[
\overline{\Omega}\subset\cup_{i=1}^{r}U_{\nu_{i}}\text{.}%
\]
Take $F=\cup_{i-1}^{r}F_{\nu_{i}}.$ This proves the proposition.

\begin{lemma}
Let $\chi_{\xi},_{\nu}$ denote the infinitesimal character of $\pi_{\xi,\nu}$.
If $C$ is a compact subset of $\mathfrak{h}_{K}^{\ast}$ then
\[
\{(\xi,\nu)\in\{\hat{M}\times\mathfrak{a}_{\mathbb{C}}^{\ast}|\chi_{\xi}%
,_{\nu}=\chi_{\Lambda},\Lambda\in C\}
\]
is compact.
\end{lemma}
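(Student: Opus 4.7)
The plan is to reduce the statement to a finite union of ``slab'' conditions in $\mathfrak{h}^{\ast}$ and use that the infinitesimal characters of $\hat{M}$ form a discrete subset of $\mathfrak{t}_{\mathbb{C}}^{\ast}$. Recall that the decomposition $\mathfrak{h}=\mathfrak{t}_{\mathbb{C}}\oplus\mathfrak{a}_{\mathbb{C}}$ (with $\mathfrak{t}=\mathrm{Lie}(T)$) lets us write every $\Lambda\in\mathfrak{h}^{\ast}$ uniquely as $\Lambda=\Lambda_{M}+\Lambda_{A}$ with $\Lambda_{M}\in\mathfrak{t}_{\mathbb{C}}^{\ast}$ and $\Lambda_{A}\in\mathfrak{a}_{\mathbb{C}}^{\ast}$.

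First I would identify the infinitesimal character of $\pi_{\xi,\nu}$. Using the Harish-Chandra parametrization, each $\xi\in\hat{M}$ has an infinitesimal character $\chi_{\Lambda_{\xi}}^{M}$ with $\Lambda_{\xi}\in\mathfrak{t}_{\mathbb{C}}^{\ast}$, well-defined modulo $W_{M}=W(\mathfrak{m}_{\mathbb{C}},\mathfrak{t}_{\mathbb{C}})$. A standard computation (embed $Z(\mathfrak{g})$ into $U(\mathfrak{h})$ via the non-compact Harish-Chandra homomorphism, then use that on a minimal $K$-type the action factors through $Z(\mathfrak{m})\otimes U(\mathfrak{a})$) gives
\[
\chi_{\xi,\nu}=\chi_{\Lambda_{\xi}+\nu},
\]
well defined modulo the full Weyl group $W=W(\mathfrak{g}_{\mathbb{C}},\mathfrak{h})$. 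Hence the condition $\chi_{\xi,\nu}=\chi_{\Lambda}$ is equivalent to $\Lambda_{\xi}+\nu\in W\cdot\Lambda$.

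Next I would use finiteness and compactness. Set $C^{\prime}=W\cdot C\subset\mathfrak{h}^{\ast}$; this is compact because $W$ is finite and $C$ is compact. The set in question is contained in
\[
S=\{(\xi,\nu)\in\hat{M}\times\mathfrak{a}_{\mathbb{C}}^{\ast}\mid\Lambda_{\xi}+\nu\in C^{\prime}\}.
\]
Projecting $C^{\prime}$ onto $\mathfrak{t}_{\mathbb{C}}^{\ast}$ along the direct sum decomposition gives a compact subset $K_{M}\subset\mathfrak{t}_{\mathbb{C}}^{\ast}$. For any $(\xi,\nu)\in S$ the $\mathfrak{t}$-component of $\Lambda_{\xi}+\nu$ is precisely $\Lambda_{\xi}$, so $\Lambda_{\xi}\in K_{M}$. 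Since $M$ is compact, the set $\{\Lambda_{\xi}:\xi\in\hat{M}\}$ (mod $W_{M}$) is a discrete subset of $\mathfrak{t}_{\mathbb{C}}^{\ast}/W_{M}$ (it lies in a translate of the weight lattice of $T$), so only finitely many $W_{M}$-orbits $[\Lambda_{\xi}]$ meet $K_{M}$, and since each orbit is finite only finitely many $\xi\in\hat{M}$ can occur.

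For each of these finitely many $\xi$'s the slice $\{\nu:(\xi,\nu)\in S\}$ is the projection to $\mathfrak{a}_{\mathbb{C}}^{\ast}$ of the compact set $C^{\prime}\cap(\Lambda_{\xi}+\mathfrak{a}_{\mathbb{C}}^{\ast})$, hence is compact in $\mathfrak{a}_{\mathbb{C}}^{\ast}$. Since $\hat{M}$ carries the discrete topology, $S$ is then a finite union of compact sets $\{\xi\}\times K_{\xi}$ and therefore compact. The main point that needs care is the formula $\chi_{\xi,\nu}=\chi_{\Lambda_{\xi}+\nu}$ relating the infinitesimal character of the induced representation to those of $\xi$ and $\nu$; once this is in hand the discreteness-plus-compactness argument is essentially immediate.
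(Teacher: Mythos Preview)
Your proof is correct and follows essentially the same approach as the paper: the key step in both is the identification $\chi_{\xi,\nu}=\chi_{\Lambda_{\xi}+\nu}$, where the paper takes $\Lambda_{\xi}=\lambda_{\xi}+\rho_{M}$ (highest weight plus half-sum) as an explicit representative. The paper's proof is a single sentence and leaves the compactness argument implicit; you have simply spelled out the discreteness-of-weights and finite-union-of-compact-slices reasoning that the paper's ``This implies the lemma'' elides.
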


\begin{proof}
Fix a system of positive roots for $(M^{0},T)$ ($M^{0}$ the identity component
of $M$). If $\lambda_{\xi}$ is the highest weight of $\xi$ relative to this
system of positive roots and if $\rho_{M}$ is the half sum of these positive
roots then $\chi_{\xi},_{\nu}=\chi_{\Lambda}$ with $\Lambda=\lambda_{\xi}%
+\rho_{M}+\nu$. This implies the lemma.
\end{proof}

\begin{lemma}
\label{VanNort-Finite}Let $(\pi,V)$ be an analytic family of admissible
$(\mathfrak{g},K)$ modules based on $Z$. Assume that $z_{0}\in Z$ is such that
$(\pi_{z_{o}},V)$ is finitely generated. If $T$ is an element of
$Z(\mathfrak{g})$ there exist analytic functions $a_{0},...,a_{n-1}$ on $Z$
such that if $z\in Z$ and $\mu$ is an eigenvalue of $\pi_{z}(T)$ then $\mu$ is
a root in $x$ of
\[
f(z,x)=x^{n}+\sum_{j=0}^{n-1}a_{j}(z)x^{j}.
\]

\end{lemma}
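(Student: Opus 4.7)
The plan is to produce $f(z,x)$ as the characteristic polynomial of $\pi_z(T)$ on a carefully chosen finite-dimensional $K$-stable subspace $E\subset V$, and then use Cayley--Hamilton, the centrality of $T$, and analytic continuation on the connected manifold $Z$ to show that $f(z,\pi_z(T))$ annihilates all of $V$ uniformly in $z$. Using the finite generation hypothesis at $z_o$, I would pick a finite $F\subset\hat K$ such that $E:=\bigoplus_{\gamma\in F}V(\gamma)$ generates $V$ as a $\pi_{z_o}(U(\mathfrak{g}))$-module. Admissibility makes $E$ finite-dimensional, and since $T\in Z(\mathfrak{g})$ commutes with $K$ the operator $\pi_z(T)$ preserves every $K$-isotypic component and hence $E$, for every $z$. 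Set $n:=\dim E$ and
\[
f(z,x):=\det\bigl(xI-\pi_z(T)|_E\bigr)=x^{n}+\sum_{j=0}^{n-1}a_j(z)\,x^{j}.
\]
The coefficients $a_j(z)$ are analytic on $Z$ because for each $v\in V(\gamma)$ the map $z\mapsto\pi_z(T)v$ is analytic into the finite-dimensional space $V(\gamma)$, so $z\mapsto\pi_z(T)|_E$ is analytic into $\mathrm{End}(E)$.

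The key step is to prove $f(z,\pi_z(T))=0$ on all of $V$, for every $z$; once that is shown, every eigenvalue of $\pi_z(T)$ is necessarily a root of $f(z,\cdot)$. Cayley--Hamilton applied to the finite-dimensional endomorphism $\pi_z(T)|_E$ gives $f(z,\pi_z(T))e=0$ for all $e\in E$ and all $z\in Z$. Centrality of $T$ means $\pi_z(T)$ commutes with every $\pi_z(u)$, $u\in U(\mathfrak{g})$, so whenever $v=\sum_i\pi_z(u_i)e_i$ with $e_i\in E$,
\[
f(z,\pi_z(T))v=\sum_i\pi_z(u_i)\,f(z,\pi_z(T))e_i=0.
\]
Thus $f(z,\pi_z(T))$ annihilates $\pi_z(U(\mathfrak{g}))E$. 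For each $\gamma\in\hat K$, finite-dimensionality of $V(\gamma)$ together with the finite generation at $z_o$ gives some $k_\gamma$ with $V(\gamma)\subset\pi_{z_o}(U^{k_\gamma}(\mathfrak{g}))E$; equivalently, the $K$-equivariant linear map
\[
\Phi_z:\bigl(U^{k_\gamma}(\mathfrak{g})\otimes E\bigr)(\gamma)\longrightarrow V(\gamma),\qquad u\otimes e\mapsto\mathrm{proj}_{V(\gamma)}\pi_z(u)e,
\]
is surjective at $z=z_o$. Both source and target are finite-dimensional and $\Phi_z$ depends analytically on $z$, so surjectivity is an open condition, and $V(\gamma)\subset\pi_z(U(\mathfrak{g}))E$ persists on an open neighborhood $U_\gamma$ of $z_o$. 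Consequently the analytic map $z\mapsto f(z,\pi_z(T))|_{V(\gamma)}$, which takes values in the finite-dimensional space $\mathrm{End}(V(\gamma))$, vanishes on $U_\gamma$, and hence vanishes on all of $Z$ by the identity theorem for analytic maps on a connected manifold. Letting $\gamma$ range over $\hat K$ yields $f(z,\pi_z(T))=0$ on $V$ for every $z\in Z$, which is what was wanted.

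The main obstacle is the passage from the pointwise Cayley--Hamilton identity at $z_o$ to a global one, and it is resolved by the two mild but essential observations that: (i) restricting to a single $K$-isotypic component confines everything to finite-dimensional vector spaces so that openness of surjectivity becomes routine linear algebra, and (ii) analytic propagation along the connected $Z$ upgrades vanishing on a nonempty open set to vanishing everywhere. No additional structure theory of Harish--Chandra modules is needed at this point beyond admissibility and the centrality of $T$.
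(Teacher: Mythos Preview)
Your proof is correct and follows essentially the same approach as the paper: take the characteristic polynomial of $\pi_z(T)$ on a finite sum of $K$-isotypic components generating $V$ at $z_o$, use Cayley--Hamilton and centrality to kill $\pi_z(U(\mathfrak{g}))E$, then propagate to all of $V$ one $K$-type at a time via openness of surjectivity and analytic continuation on the connected $Z$. The only cosmetic difference is that where you invoke ``surjectivity of $\Phi_z$ is an open condition,'' the paper phrases the same step as nonvanishing of the wedge $(P_\gamma\pi_z(x_1)v_1)\wedge\cdots\wedge(P_\gamma\pi_z(x_r)v_r)$ in the top exterior power $\wedge^r V(\gamma)$.
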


\begin{proof}
Let $F$ be a finite number of elements of $\hat{K}$ such that $\pi_{z_{0}%
}(U(\mathfrak{g}))\sum_{\gamma\in F}V(\gamma)=V$. Let $L=$ $\sum_{\gamma\in
F}V(\gamma)$. Then we define the $a_{j}$ the by the formula
\[
f(z,x)=\det\left(  xI-\pi_{z}(T\right)  _{|L})=x^{n}+\sum_{j=0}^{n-1}%
a_{j}(z)x^{j}.
\]
The Cayley-Hamilton theorem implies that $h(z)=T^{n}+\sum_{j=0}^{n-1}%
a_{j}(z)T^{j}\in Z(\mathfrak{g})$ vanishes on $L$. Let $\gamma\in\hat{K}$ then
there exist $x_{1},...,x_{r}\in U(\mathfrak{g})$ and $v_{1},...,v_{r}\in L$
such that $\{\pi_{z_{0}}(x_{i})v_{i}\}_{i=1}^{r}$ is a basis of $V(\gamma)$.
Let $P_{\gamma}$ be the projection onto the $\gamma$--isotypic component of
$V$. Thus%
\[
(P_{\gamma}\pi_{z}(x_{1})v_{1})\wedge(P_{\gamma}\pi_{z}(x_{2})v_{2}%
)\wedge\cdots\wedge(P_{\gamma}\pi_{z}(x_{r})v_{r})\in\wedge^{r}V(\gamma)
\]
(a one dimensional space) is non--zero for $z=z_{0}$. This implies that there
exists an open neighborhood, $U$, of $z_{0}$ in $\Omega$ such that
\[
P_{\gamma}\pi_{z}(x_{1})v_{1},P_{\gamma}\pi_{z}(x_{2})v_{2},...,P_{\gamma}%
\pi_{z}(x_{r})v_{r}%
\]
is a basis of $V(\gamma)$ for $z\in U$. That
\[
h(z)P_{\gamma}\pi_{z}(x_{i})v_{i}=P_{\gamma}\pi_{z}(x_{i})h(z)v_{i}=0
\]
implies that $h(z)V(\gamma)=0$ for $z\in U$. The connectedness of $Z$ implies
that $h(z)V(\gamma)=0$ for $z\in Z$. Thus $h(z)=0$ for all $z\in Z$. This
proves the Lemma.
\end{proof}

If $V$ is a $(\mathfrak{g},K)$--module then set $ch(V)$ equal to the set of
$\Lambda\in\mathfrak{h}^{\ast}$ such that there exists $v\in V$ with
$Tv=\chi_{\Lambda}(T)v$ for all $T\in Z(\mathfrak{g})$.

\begin{corollary}
\label{compactness}Keep the notation and assumptions of the previous lemma, If
$\omega\subset Z$ is compact then there exists a compact subset $C$ of
$\mathfrak{h}^{\ast}$ such that $ch(\pi_{z},V)\subset C$ for all $z\in\omega$.
\end{corollary}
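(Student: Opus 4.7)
The plan is to combine Lemma \ref{VanNort-Finite} with the Chevalley/Harish-Chandra description of $Z(\mathfrak{g})$ as a polynomial algebra of rank $l = \dim \mathfrak{h}$. First I choose $T_{1},\dots,T_{l}\in Z(\mathfrak{g})$ whose images under the Harish-Chandra isomorphism $Z(\mathfrak{g})\xrightarrow{\sim} S(\mathfrak{h})^{W}$ are algebraically independent generators. Then the induced morphism
\[
\Phi\colon \mathfrak{h}^{\ast}/W\longrightarrow \mathbb{C}^{l},\qquad [\Lambda]\longmapsto \bigl(\chi_{\Lambda}(T_{1}),\dots,\chi_{\Lambda}(T_{l})\bigr),
\]
is an isomorphism of affine varieties, so the composition $\mathfrak{h}^{\ast}\to \mathfrak{h}^{\ast}/W\xrightarrow{\Phi}\mathbb{C}^{l}$ is finite, hence proper. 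It therefore suffices to show that the image of $\bigcup_{z\in\omega} ch(\pi_{z},V)$ under this composition is bounded in $\mathbb{C}^{l}$.

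Fix $i$ and apply Lemma \ref{VanNort-Finite} to $T_{i}$ to obtain analytic functions $a_{0}^{(i)},\dots,a_{n_{i}-1}^{(i)}$ on $Z$ such that every eigenvalue of $\pi_{z}(T_{i})$ is a root of
\[
f_{i}(z,x)=x^{n_{i}}+\sum_{j=0}^{n_{i}-1} a_{j}^{(i)}(z)\,x^{j}.
\]
If $\Lambda\in ch(\pi_{z},V)$, then by definition there is $v\in V$ with $T_{i}v=\chi_{\Lambda}(T_{i})v$, so $\chi_{\Lambda}(T_{i})$ is such an eigenvalue and hence a root of $f_{i}(z,\cdot)$. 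Since each $a_{j}^{(i)}$ is continuous and $\omega$ is compact, the coefficients are uniformly bounded on $\omega$; the standard bound on roots of a monic polynomial (any root is at most $1+\max_{j}|a_{j}^{(i)}(z)|$ in modulus) then gives a constant $M_{i}$ with $|\chi_{\Lambda}(T_{i})|\le M_{i}$ for every $z\in\omega$ and every $\Lambda\in ch(\pi_{z},V)$.

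Consequently the $W$-orbits $[\Lambda]$ with $\Lambda\in\bigcup_{z\in\omega} ch(\pi_{z},V)$ all land in the compact polydisc $\prod_{i=1}^{l}\overline{D(0,M_{i})}\subset \mathbb{C}^{l}$. Let $C$ be the preimage of this polydisc under the proper map $\mathfrak{h}^{\ast}\to\mathbb{C}^{l}$; then $C$ is compact and contains $ch(\pi_{z},V)$ for every $z\in\omega$, as required. There is no real obstacle here beyond setting up the Chevalley coordinates; the only step requiring a moment's care is the uniform root bound, which is immediate from continuity of the $a_{j}^{(i)}$ on the compactum $\omega$.
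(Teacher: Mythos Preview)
Your proof is correct and follows essentially the same approach as the paper: apply Lemma \ref{VanNort-Finite} to a set of generators of $Z(\mathfrak{g})$, use the elementary bound $|\mu|\le 1+\max_{j}|a_{j}|$ on roots of a monic polynomial, and invoke compactness of $\omega$ to make this bound uniform. The only difference is cosmetic: the paper takes an arbitrary generating set $T_{1},\dots,T_{m}$ and simply asserts that the resulting uniform bound on $|\chi_{\Lambda}(T_{j})|$ implies the corollary, whereas you choose algebraically independent generators and spell out the final step via properness of the Chevalley map $\mathfrak{h}^{\ast}\to\mathfrak{h}^{\ast}/W\cong\mathbb{C}^{l}$.
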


\begin{proof}
Let $T_{1},...,T_{m}$ be a generating set for $Z(\mathfrak{g})$ and let
$f_{j}(z,x)$ be the function in the previous lemma corresponding to $T_{j}$.
Then
\[
f_{j}(z,x)=x^{n_{j}}+\sum_{i=0}^{n_{j}-1}a_{j,i}(z)x^{j}%
\]
with $a_{j,i}$ analytic in on $Z$. If $\chi_{\Lambda}\in ch(\pi_{z},V)$ then%
\[
\left\vert \chi_{\Lambda}(Z_{j})\right\vert \leq\max_{0\leq i<n_{j}}%
|a_{j,i}(z)|+1
\]
(c.f. \cite{RRG1-11},7.A.1.3). If $C\subset Z$ is compact then there exists a
constant $r<\infty$ such that $|a_{j,i}(z)|\leq r$ for all $i,j$ and $z\in C$.
This implies the corollary.
\end{proof}

\begin{theorem}
\label{loc-finite}Let $(\pi,V)$ be an analytic family of admissible
$(\mathfrak{g},K)$ modules based on $Z$. Assume that there exists $z_{0}\in Z$
such that $(\pi_{z_{0}},V)$ is finitely generated. If $\omega$ is a compact
subset of $Z$ then there exists $S_{\omega}\subset\hat{K}$ a finite subset
such that if $y\in\omega$ then%
\[
\pi_{y}(U(\mathfrak{g))}\left(  \sum_{\gamma\in S_{\omega}}V(\gamma)\right)
=V\text{.}%
\]

\end{theorem}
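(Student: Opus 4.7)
The strategy is a two-step reduction: first prove a \emph{local uniform} statement, then apply compactness of $\omega$ to finish. The local uniform statement says that for every $y\in Z$ there exists an open neighborhood $U_y$ of $y$ and a finite $F_y\subset\hat K$ such that $\pi_z(U(\mathfrak g))L_{F_y}=V$ for all $z\in U_y$, where $L_F:=\sum_{\gamma\in F}V(\gamma)$. Once this is known, a finite subcover $U_{y_1},\dots,U_{y_N}$ of $\omega$ and $S_\omega:=F_{y_1}\cup\cdots\cup F_{y_N}$ give the theorem.

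To prove the local uniform statement I would first show that $(\pi_y,V)$ is finitely generated for every $y\in Z$. This uses analytic continuation from $z_0$ combined with Corollary \ref{compactness}: along any compact arc in $Z$ joining $z_0$ to $y$, the infinitesimal characters $ch(\pi_z,V)$ lie in a fixed compact $C\subset\mathfrak h^\ast$. By the Cayley-Hamilton argument of Lemma \ref{VanNort-Finite}, applied to a generating set of $Z(\mathfrak g)$, every element of $Z(\mathfrak g)$ satisfies a monic polynomial identity on $V$ with analytic coefficients bounded along the arc. Via the PBW-type decomposition in Theorem \ref{decompositions}, this converts generation from a finite set of $K$-types at $z_0$ into generation from some (possibly larger) finite set at $y$, with a filtration degree uniformly bounded along the arc.

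Given finite generation at $y$, say $\pi_y(U(\mathfrak g))L_{F_y}=V$ with $V(\gamma)\subset\pi_y(U^{j_\gamma}(\mathfrak g))L_{F_y}$, the local uniform claim in a neighborhood of $y$ follows from an openness argument: the map $u\otimes v\mapsto P_\gamma(\pi_z(u)v)$ from the finite-dimensional space $U^{j_\gamma}(\mathfrak g)\otimes L_{F_y}$ to the finite-dimensional $V(\gamma)$ (admissibility) depends analytically on $z$, so its surjectivity at $y$ persists on an open neighborhood. Uniformity across the (a priori infinite) collection of $\gamma$'s with $V(\gamma)\neq 0$ is again handled by Corollary \ref{compactness}, which bounds the relevant filtration degrees $j_\gamma$ locally uniformly.

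The hard part will be controlling the analytic continuation of finite generation from $z_0$ to an arbitrary $y$. Pointwise openness of the generation condition is immediate, but pointwise closedness is not, and the rank of the relevant finite-dimensional analytic family of linear maps can drop on proper analytic subsets of $Z$. The whole force of Corollary \ref{compactness}, and more precisely of the Cayley-Hamilton device in Lemma \ref{VanNort-Finite}, is to prevent the requisite filtration degree from blowing up and to allow one to bound the enlargement of $F_0$ needed at each step by data depending only on the compact infinitesimal-character set $C$. This is the technical core and parallels the covering argument in the proof of Proposition \ref{indfinite}, now applied to an abstract analytic parameter rather than to the parameter $\nu$ of a parabolic induction.
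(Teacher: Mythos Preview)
Your overall scaffolding---prove a local uniform version and then cover $\omega$---is reasonable, but the local step as you outline it has a real gap.

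The claim that Corollary~\ref{compactness} ``bounds the relevant filtration degrees $j_\gamma$ locally uniformly'' cannot hold. For any finite $F\subset\hat K$ and any $N$, the space $\pi_z(U^N(\mathfrak g))L_F$ is finite-dimensional, while $V$ is not; hence the degrees $j_\gamma$ necessarily tend to infinity as $\gamma$ varies. Corollary~\ref{compactness} only confines the infinitesimal characters to a compact subset of $\mathfrak h^\ast$; it says nothing about filtration depth. So your openness argument gives, for each $\gamma$, a neighborhood $U_\gamma$ of $y$ on which $P_\gamma$-surjectivity persists, but nothing prevents $\bigcap_\gamma U_\gamma=\{y\}$. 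Your last paragraph essentially concedes this, and the analogy with Proposition~\ref{indfinite} is not an argument: that proposition is proved by representation-theoretic means (intertwining operators and tensoring with finite-dimensional representations), not by an abstract openness-plus-compactness scheme.

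The paper avoids the difficulty by never arguing $K$-type by $K$-type and never invoking openness in $z$. Given $\omega$, Corollary~\ref{compactness} furnishes a compact $C\subset\mathfrak h^\ast$ containing $ch(\pi_y,V)$ for all $y\in\omega$. The set $X=\{(\xi,\nu)\in\hat M\times\mathfrak a_{\mathbb C}^\ast:\chi_{\xi,\nu}=\chi_\Lambda,\ \Lambda\in C\}$ is compact, hence $X=\bigcup_{j=1}^r\{\xi_j\}\times D_j$ with each $D_j$ compact. Proposition~\ref{indfinite} supplies, for each $j$, a finite $S_j\subset\hat K$ generating $H^{\xi_j,\nu}$ for all $\nu\in D_j$; put $S_\omega=\bigcup_j S_j$. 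Now fix $y\in\omega$. Exhausting $V_y$ by finitely generated submodules yields a (possibly countably infinite) composition series; by the dual subrepresentation theorem each irreducible factor is a quotient of some $H^{\xi_i,\nu_i}$ with $(\xi_i,\nu_i)\in X$, and so contains a $K$-type from $S_\omega$. It follows that every nonzero quotient of $V_y$ has nonzero $S_\omega$-isotypic part, whence $V_y\big/\pi_y(U(\mathfrak g))\sum_{\gamma\in S_\omega}V(\gamma)=0$. No continuity in $y$ enters this final step; the uniformity over $\omega$ is already encoded in $S_\omega$ via the principal-series input.
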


\begin{proof}
Let $C$ as in the above corollary for $\omega$. Let
\[
X=\{(\xi,\nu)\in\hat{M}\times\mathfrak{a}_{\mathbb{C}}^{\ast}|\chi_{\xi}%
,_{\nu}=\chi_{\Lambda},\Lambda\in C\}.
\]
$X$ is compact so there exist $\xi_{1},...,\xi_{r}\in\hat{M}$ and
$D_{1},...,D_{r}$, compact subsets of $\mathfrak{a}_{\mathbb{C}}^{\ast}$, such
that $X=\cup_{j}\xi_{j}\times D_{j}$. Let $S_{j}\subset\hat{K}$ be the finite
set corresponding to $\xi_{j}\times D_{j}$ in Proposition \ref{indfinite}. Set
$S_{\omega}=\cup S_{j}$. Let $L_{1}\subset L_{2}\subset...\subset L_{j}%
\subset...$ be an exhaustion of the $K$--types of $V$ with each $L_{j}$ finite.

We will use the notation \thinspace$V_{y}$ for the $(\mathfrak{g},K)$--module
$(\pi_{y},V)$. Let $y\in C$. Set $W_{j}=\pi_{y}(U(\mathfrak{g))}\left(
\sum_{\gamma\in L_{j}}V(\gamma)\right)  $ then $W_{j}\subset W_{j+1}$ and
$\cup W_{j}=V$. Each $W_{j}$ is finitely generated and admissible, hence of
finite length. Therefore $V_{y}$ has a finite composition series
\[
0=V_{y}^{0}\subset V_{y}^{1}\subset...\subset V_{y}^{N}%
\]
or a countably infinite composition series%
\[
0=V_{y}^{0}\subset V_{y}^{1}\subset...\subset V_{y}^{n}\subset V_{y}%
^{n+1}\subset...
\]
with $V_{y}^{i}/V_{y}^{i-1}$ irreducible. Thus by the dual form of the
subrepresentation theorem there exists for each $i,\xi_{i}\in\hat{M}$ and
$\nu_{i}\in\mathfrak{a}_{\mathbb{C}}^{\ast}$ so that $V_{y}^{i}/V_{y}^{i}$ is
a quotient of $(\pi_{\xi_{i},\nu_{i}},H^{\xi_{i},\nu_{i}})$. Observe that
$(\xi_{i},\nu_{i})\in X$. Thus $V_{y}^{i}/V_{y}^{i-1}(\gamma_{i})\neq0$ for
some $\gamma_{i}\in S_{\omega}$. Let $M$ be a quotient module of $V_{y}$. Then
$M=V_{y}/U$ with $U$ a submodule of $V_{y}$. There must be an $i$ such that
$V_{y}^{i}/\left(  V_{y}^{i-1}\cap U\right)  \neq0$. Let $i$ be minimal
subject to this condition. Then $V_{y}^{i-1}\subset U$. Thus $V_{y}^{i}%
/V_{y}^{i-1}$ is a submodule of $M$. Hence $M(\gamma)\neq0$ for some
$\gamma\in S_{\omega}$. This implies that
\[
\pi_{y}(U(\mathfrak{g))}\left(  \sum_{\gamma\in S_{\omega}}V(\gamma)\right)
=V\text{.}%
\]
Indeed,
\[
\left(  V_{y}/\pi_{y}(U(\mathfrak{g))}\left(  \sum_{\gamma\in S_{\omega}%
}V(\gamma)\right)  \right)  (\gamma)=0,\gamma\in S_{\omega}.
\]

\end{proof}

\begin{corollary}
\label{finitegen}(To the proof) Let $(\pi,V)$ be an analytic family of
finitely generated admissible $(\mathfrak{g},K)$ modules based on $Z$ . Let
$\omega$ be open in $Z$ with compact closure. Let for each $z\in\omega$,
$U_{z}$ be a $(\mathfrak{g},K)$--submodule of $V_{z}$. Then there exists a
finite subset $F_{\omega}\subset\hat{K}$ such that
\[
\pi_{z}(U(\mathfrak{g}))\left(  \sum_{\gamma\in F_{\omega}}U_{z}%
(\gamma)\right)  =U_{z}.
\]

\end{corollary}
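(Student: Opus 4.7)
The plan is to replay the proof of Theorem \ref{loc-finite} verbatim, but applied uniformly to the family of submodules $U_{z}$ rather than the ambient $V_{z}$. The crucial observation is that the compact set of infinitesimal characters from which $F_{\omega}$ is extracted is determined by the ambient family $V$ alone and therefore controls every submodule simultaneously.

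First I would apply Corollary \ref{compactness} to $(\pi,V)$ and $\bar{\omega}$ to obtain a compact $C\subset\mathfrak{h}^{\ast}$ with $ch(V_{z})\subset C$ for every $z\in\bar{\omega}$. Since $U_{z}\subset V_{z}$ as $(\mathfrak{g},K)$-modules, every element of $ch(U_{z})$ already lies in $C$. Following the proof of Theorem \ref{loc-finite}, form
\[
X=\{(\xi,\nu)\in\hat{M}\times\mathfrak{a}_{\mathbb{C}}^{\ast}\mid\chi_{\xi,\nu}=\chi_{\Lambda}\text{ for some }\Lambda\in C\},
\]
decompose this compact set as $\cup_{j}\xi_{j}\times D_{j}$ with each $D_{j}\subset\mathfrak{a}_{\mathbb{C}}^{\ast}$ compact, and apply Proposition \ref{indfinite} to each pair to build the desired finite set $F_{\omega}\subset\hat{K}$.

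Next I would verify $F_{\omega}$ works uniformly in $z\in\omega$ and in the choice of $U_{z}$. Although $U_{z}$ is not assumed finitely generated, it is admissible as a submodule of the admissible $V_{z}$, so exhausting $\hat{K}$ by finite subsets $L_{1}\subset L_{2}\subset\cdots$ and setting $W_{j}=\pi_{z}(U(\mathfrak{g}))\bigl(\sum_{\gamma\in L_{j}}U_{z}(\gamma)\bigr)$ presents $U_{z}$ as the union of finitely generated admissible, hence finite-length, submodules. Consequently $U_{z}$ admits a (possibly countably infinite) composition series whose irreducible subquotients are irreducible subquotients of $V_{z}$; by the dual subrepresentation theorem each such subquotient is a quotient of $H^{\xi,\nu}$ for some $(\xi,\nu)\in X$, and by construction of $F_{\omega}$ it then has a nonzero $\gamma$-isotypic component for some $\gamma\in F_{\omega}$.

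The final step is the minimal-index argument from the end of the proof of Theorem \ref{loc-finite}: any nonzero quotient of $U_{z}$ contains some irreducible subquotient of $U_{z}$ as a submodule, and hence has a nonzero $\gamma$-isotypic component for some $\gamma\in F_{\omega}$. Applied to the specific quotient $U_{z}/\pi_{z}(U(\mathfrak{g}))\bigl(\sum_{\gamma\in F_{\omega}}U_{z}(\gamma)\bigr)$, which has vanishing $\gamma$-isotypic components for every $\gamma\in F_{\omega}$, this forces that quotient to be zero, which is the required conclusion. The principal point requiring care is transferring the composition series argument of Theorem \ref{loc-finite}, phrased there for finitely generated $V$, to the possibly non-finitely-generated $U_{z}$; this is exactly what the $W_{j}$-exhaustion above supplies.
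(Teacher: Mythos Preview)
Your proposal is correct and is precisely the paper's approach. The paper's proof is the single sentence ``all that was used was that the set of possible infinitesimal characters is compact,'' and you have faithfully unpacked this: the compact set $C$ comes from the ambient family $V$ via Corollary~\ref{compactness}, so $ch(U_{z})\subset ch(V_{z})\subset C$, and then the entire argument of Theorem~\ref{loc-finite} (including the $W_{j}$--exhaustion and the minimal-index step) runs verbatim with $U_{z}$ in place of $V_{y}$.
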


\begin{proof}
In the proof of the theorem above all that was used was that the set of
possible infinitesimal characters is compact.
\end{proof}

\section{Imbeddings of families of $J$--modules}

Let $X$ be a connected real or complex analytic manifold and let $(\mu,R)$ be
an analytic family of objects in $W(K,\mathbf{D})$ based on $X.$ The purpose
of this section is to prove

\begin{theorem}
\label{Imbedding}Let the representation of $Q$, $\sigma_{k,x}$, on
\[
W_{k}=U(\mathfrak{n)/}\mathfrak{n}^{k+1}U(\mathfrak{n})\otimes E\otimes
H\otimes R_{|M}%
\]
be as in Theorem \ref{dependence} and let $T_{k}(x)$ be the analytic family as
in Theorem \ref{J-morph}. If $\omega$ is a compact subset of $X$ then there
exists $k_{\omega}$ such that if $x\in\omega$ then $T_{k}(x)$ is injective.
\end{theorem}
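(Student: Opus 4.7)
The plan is to reduce the injectivity of $T_k(x)$, which is defined on the infinite-dimensional $V=\mathcal{H}\otimes E\otimes R$, to injectivity on a fixed finite-dimensional sum of $K$--isotypic components that works uniformly in $x\in\omega$, and then to conclude by lower-semicontinuity of rank and compactness of $\omega$.

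Step 1. Pick an open $\omega'\subset X$ with compact closure containing $\omega$, and apply Corollary \ref{finitegen} to the analytic family $(\lambda,V)$ of Theorem \ref{J-family} (which realizes $x\mapsto J(R_x)$). This yields a finite subset $F=F_{\omega'}\subset\hat{K}$ such that, for every $x\in\omega'$ and every $(\mathfrak{g},K)$--submodule $U_x$ of $(\lambda_x,V)$, one has $\lambda_x(U(\mathfrak{g}))\,U_x(F)=U_x$. Because $T_k(x)$ is a morphism of $(\mathfrak{g},K)$--modules by Theorem \ref{J-morph}, its kernel is a submodule, and therefore $T_k(x)$ is injective if and only if $\ker T_k(x)\cap V(F)=0$.

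Step 2. Let $P_k(x)=p_k\circ T_x\colon V\to W_k=J(R_x)/\mathfrak{n}^{k+1}J(R_x)$. Evaluating $S_{k,R_x}$ at $1\in K$ shows $\ker T_k(x)\subset\ker P_k(x)$, so it suffices to exhibit $k_\omega$ with $P_k(x)|_{V(F)}$ injective for all $x\in\omega$ and $k\geq k_\omega$. By Theorem \ref{decompositions}(2), $J(R_x)$ is a free $U(\mathfrak{n})$--module on the finite-dimensional space $\mathrm{symm}(E)\,\mathrm{symm}(H)\otimes R$, so $\bigcap_{k}\mathfrak{n}^{k+1}J(R_x)=0$ and consequently $\bigcap_k\ker P_k(x)=0$. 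Since $\mathcal{H}$ contains each irreducible $K$--representation with finite multiplicity and $E$, $R$ are finite-dimensional, $V(F)$ is finite-dimensional; the decreasing sequence $\ker P_k(x)\cap V(F)\subset V(F)$ therefore stabilizes at $0$, and there exists $k(x)$ with $P_{k(x)}(x)|_{V(F)}$ injective.

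Step 3. Using Corollary \ref{covar}, trivialize the target: as a vector space, $W_k$ identifies with the $x$--independent $U(\mathfrak{n})/\mathfrak{n}^{k+1}U(\mathfrak{n})\otimes E\otimes H\otimes R$. In a fixed basis the matrix of $P_{k(x)}(y)|_{V(F)}$ has entries analytic in $y$ (the same PBW computation used in the proofs of Theorems \ref{dependence} and \ref{J-family}), and at $y=x$ its rank equals $\dim V(F)$. Lower-semicontinuity of rank provides an open neighborhood $U_x$ of $x$ in $\omega'$ on which $P_{k(x)}(y)|_{V(F)}$ remains injective; since $\mathfrak{n}^{k'+1}\subset\mathfrak{n}^{k+1}$ for $k'\geq k(x)$, the kernels $\ker P_{k'}(y)$ decrease in $k'$, so injectivity persists for all $k'\geq k(x)$ throughout $U_x$. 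Covering the compact set $\omega$ by finitely many such $U_{x_1},\dots,U_{x_r}$ and setting $k_\omega=\max_i k(x_i)$ completes the argument: for $k\geq k_\omega$ and $x\in\omega$, $P_k(x)|_{V(F)}$ is injective, hence so is $T_k(x)|_{V(F)}$, and by Step 1 so is $T_k(x)$.

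The only conceptual obstacle is making the reduction to finitely many detecting $K$--types uniform over the parameter; this is precisely the content of Corollary \ref{finitegen}, which rests on the van der Noort--type compactness of infinitesimal characters. Everything else is structure theory from Theorem \ref{decompositions} (guaranteeing that each $J(R_x)$ is a free $U(\mathfrak{n})$--module on a fixed finite-dimensional generating space) together with elementary semicontinuity of the rank of an analytic matrix.
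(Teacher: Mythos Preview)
Your argument is correct, but it follows a genuinely different route from the paper's. The paper proceeds by contradiction through the \emph{exponents} of the induced modules: using the exact sequence
\[
0\rightarrow I(\eta_{k,x})\rightarrow I(\sigma_{k+1,x})\rightarrow I(\sigma_{k,x})\rightarrow 0
\]
it observes that the $\mathfrak{a}$--weights occurring in the kernel piece are of the form $\nu_i+\beta_k$ with $\beta_k$ a sum of $k$ positive restricted roots. If for every $j$ there were $x\in\omega$ and $k\geq j$ with $\ker T_k(x)\neq 0$ but $\ker T_{k+1}(x)=0$ (the pointwise injectivity for large $k$ being the [HOW] result), then $\ker T_k(x)$ would embed into $I(\eta_{k,x})$, forcing an infinitesimal character of $J(R_x)$ to equal some $\Lambda+\nu_i+\beta_k$; this contradicts the compactness of $\bigcup_{x\in\omega}ch(J(R_x))$ from Corollary~\ref{compactness} because the $\beta_k$ are unbounded.

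Your approach instead invokes Corollary~\ref{finitegen} to find a fixed finite set $F\subset\hat K$ detecting all submodules uniformly on $\omega$, reduces to the injectivity of the finite matrix $P_k(x)|_{V(F)}$ via evaluation of $S_{k,R_x}$ at $e\in K$, obtains pointwise injectivity from the freeness of $J(R_x)$ over $U(\mathfrak{n})$ (so $\bigcap_k\mathfrak{n}^{k+1}J(R_x)=0$), and then propagates by lower semicontinuity of rank and a finite cover. Both arguments ultimately rest on the van der Noort compactness of infinitesimal characters, but the paper uses it directly on the exponents while you feed it through Corollary~\ref{finitegen}. Your proof avoids the analysis of composition factors of $I(\eta_{k,x})$ and the appeal to the pointwise [HOW] result, replacing them with an elementary rank argument; the paper's proof, on the other hand, makes the obstruction (growth of exponents versus bounded infinitesimal characters) more transparent. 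One small remark: your use of Corollary~\ref{finitegen} for \emph{all} submodules at once is legitimate because, as the paper notes in its proof, the set $F_\omega$ depends only on the compact set of possible infinitesimal characters, not on the particular submodules chosen.
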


This is a slight extension of a result in \cite{HOW}. Given $k$ then
$(\sigma_{k,x},W_{k})$ as a composition series $W_{k,x}=W_{k,x}^{1}\supset
W_{k,x}^{2}\supset...\supset W_{k,x}^{r}\supset W_{k,x}^{r+1}=\{0\}$ and each
$W_{k,x}^{i}/W_{k,x}^{i+1}$ is isomorphic with the representation
$(\lambda_{j,\nu_{j}},H_{\lambda_{j}})$ with $(\lambda_{j},H_{j})$ an
irreducible representation of $M$ and $\nu_{j}\in\mathfrak{a}_{\mathbb{C}%
}^{\ast}$ and $\lambda_{j,\nu}(man)=a^{\nu+\rho}\lambda_{j}(m)$ with $m\in
M,a\in A$ and $n\in N$. Also note that there is a natural $Q$--module exact
sequence%
\[
0\rightarrow\mathfrak{n}^{k+2}U(\mathfrak{n)/n}^{k+1}U(\mathfrak{n})\otimes
E\otimes H\otimes R_{|M}\rightarrow W_{k+1,x}\rightarrow W_{k,x}\rightarrow0.
\]
We may assume that the composition series is consistent with this exact
sequence. This implies that the $\nu_{j}$ that appear in $W_{k}/W_{k+1}$ are
of the form $\mu+\alpha_{1}+...+\alpha_{k}$ with $\alpha_{i}$ a restricted
positive root (i.e. a weight of $\mathfrak{a}$ on $\mathfrak{n}$).

Now consider the corresponding exact sequence of $(\mathfrak{g},K)$--modules.%
\[
\overset{}{(\ast)}0\rightarrow I(\eta_{k,x})\rightarrow I(\sigma
_{k+1,x})\rightarrow I(\sigma_{k,x})\rightarrow0.
\]
The $(\mathfrak{g},K)$--modules $I(\sigma_{\nu})$ with $\sigma$ an irreducible
representation of $M$ with Harish-Chandra parameter $\Lambda_{\sigma}$ (for
$Lie(M)_{\mathbb{C}}$) and $\nu\in\mathfrak{a}_{\mathbb{C}}^{\ast}$ have
infinitesimal character with Harish-Chandra parameter $\Lambda_{\sigma}+\nu$.
We are finally ready to prove the theorem.

Let $C_{\omega}$ be the compact set $\cup_{x\in\omega}ch(J(R_{x}))$. Let
$C_{\omega}=\cup_{j=1}^{k_{\omega}}\Lambda_{i}+D_{i}$ with $D_{i}$ compact in
$\mathfrak{a}_{\mathbb{C}}^{\ast}$. \ Assume that the result is false for
$\omega$. Then for each $j$ there exists $k\geq j$ and $x$ such that $\ker
T_{k}(x)\neq0$ but $\ker T_{k+1}=0$. Label the Harish -Chandra parameters that
appear in $I(\sigma_{o,x})$, $\Lambda_{1}+\nu_{1},...,\Lambda_{s}+\nu_{s}$
with $\Lambda_{i}\in Lie(T)^{\ast}$ and $\nu_{i}\in\mathfrak{a}_{\mathbb{C}%
}^{\ast}$ (recall that we have fixed a maximal torus of $M$). The above
observations imply that $ch(J(R_{x})$ contains an element of the form
$\Lambda+\nu_{i_{k}}+\beta_{k}$ with $\beta_{k}$ a sum of $k$ positive roots,
$\Lambda\in Lie(T)^{\ast}$ and $1\leq i_{k}\leq s$. We now have our
contradiction $\nu_{i_{k}}+\beta_{k}\in\cup D_{i}$ which is compact. But the
set of $\nu_{i_{k}}+\beta_{k} $ is unbounded.

\section{Families of Hilbert and Fr\'{e}chet representations}

\begin{definition}
Let $X$ be metric space. A continuous family of Hilbert representations based
on $X$ of $G$ is a pair $(\pi,H)$ of a Hilbert space $H$ and $\pi:X\times
G\rightarrow H$ strongly continuous such that if $\pi_{x}(g)=\pi(x,g)$ then
$(\pi_{x},H)$ is a strongly continuous representation of $G$. The family will
be called admissible if $\pi_{x|K}$ is independent of $x\in X$ and $\dim
H(\gamma)<\infty$ for each $\gamma\in\hat{K}$.
\end{definition}

\begin{lemma}
Let $(\pi,H)$ be a continuous family of admissible Hilbert representations of
$G$ based on the connected real or complex analytic manifold $X$ and denote by
$d\pi_{x}$the action of $\mathfrak{g}$ on $H_{K}^{\infty}$ (the $K$--finite
$C^{\infty}$--vectors). Then $(d\pi,H_{K})$ is a continuous family of
admissible $(\mathfrak{g},K)$--modules based on $X$.
\end{lemma}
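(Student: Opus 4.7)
The definition requires two items: admissibility of each $(d\pi_x, H_K)$, and continuity of $x \mapsto d\pi_x(y)v$ for every $y \in U(\mathfrak{g})$ and $v \in H_K$. Admissibility I would dispose of immediately: since $\pi_{x|K}$ is independent of $x$, the $K$-isotypic subspaces $H(\gamma)$ and their (finite) dimensions are $x$-independent, and $K$-finite vectors in a strongly continuous Hilbert $G$-representation are automatically smooth, so each $(d\pi_x, H_K)$ is a Harish-Chandra module.

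For continuity, my plan is to reduce by induction on the filtration degree of $y$ to the case $y = Y \in \mathfrak{g}$. The reduction is legitimate because, for fixed $y$ and $v$, the vector $d\pi_x(y)v$ lies in a finite-dimensional $K$-invariant subspace of $H$ determined only by the degree of $y$ and the $K$-types of $v$, not by $x$; continuity of the composition $d\pi_x(Y)d\pi_x(y')v$ then follows from continuity of each factor on the intermediate finite-dimensional spaces. Decomposing $Y = Y_\mathfrak{k} + Y_\mathfrak{p}$ disposes of the $\mathfrak{k}$-part, since $d\pi_x(Y_\mathfrak{k})v = d\pi(Y_\mathfrak{k})v$ is $x$-independent. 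So I am left with $Y \in \mathfrak{p}$ and, by isotypic decomposition, $v \in H(\gamma)\cap H_K$.

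Let $F \subset \hat{K}$ be the $K$-types appearing in $\gamma \otimes \operatorname{Ad}_\mathfrak{p}$, set $W = \bigoplus_{\gamma' \in F} H(\gamma')$, and let $P_W$ be the projection onto $W$; then $W$ is finite-dimensional and $x$-independent, and $d\pi_x(Y)v \in W$. It then suffices to prove that $\Phi(x,t) := P_W\pi_x(\exp tY)v$, a $W$-valued function which is jointly continuous in $(x,t)$ and smooth in $t$ for each fixed $x$, is in fact jointly $C^1$ on a neighborhood of $X \times \{0\}$, giving continuity of $x \mapsto \partial_t\Phi(x,0) = d\pi_x(Y)v$. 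Since $\partial_t^2\Phi(x,t) = P_W\pi_x(\exp tY)\,d\pi_x(Y^2)v$, joint $C^1$-regularity will follow from a locally uniform-in-$x$ bound on $\partial_t^2\Phi$ over a compact neighborhood $\omega \times [-\epsilon,\epsilon]$, combined with the fundamental theorem of calculus to upgrade pointwise differentiability at $t = 0$ to a locally uniform-in-$x$ Taylor remainder.

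The hard part will be producing this local uniform bound on $\|d\pi_x(Y^2)v\|_H$. I plan to combine three ingredients: (a) the continuous-family analog of Corollary \ref{compactness}, whose Cayley-Hamilton proof adapts readily to the continuous setting to show that the generalized infinitesimal characters carried by $v$ under $d\pi_x$ stay in a compact subset of $\mathfrak{h}^*$ as $x$ ranges over $\omega$; (b) the uniform boundedness principle applied to the strongly continuous family $\{\pi_x(\exp tY) : x \in \omega,\, |t| \leq 1\}$ on $H$, giving a uniform operator norm bound; and (c) standard Harish-Chandra-type estimates bounding $\|d\pi_x(u)v\|$ for $u \in U^n(\mathfrak{g})$ acting on a $K$-isotypic vector in terms of the infinitesimal character data and the $K$-type. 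Granted this bound, the induction closes the general $U(\mathfrak{g})$ case.
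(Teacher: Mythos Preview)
Your reduction to $Y\in\mathfrak{g}$ and the projection to a fixed finite-dimensional $W$ are fine, but the plan breaks down at the uniform bound on $\|d\pi_x(Y^2)v\|_H$. Ingredient (c) does not exist in the form you need: the Hilbert norm on $H$ is an arbitrary $K$-invariant inner product with no a~priori relation to the $(\mathfrak g,K)$-module data, so there is no ``Harish-Chandra-type estimate'' controlling $\|d\pi_x(u)v\|_H$ purely in terms of the infinitesimal character and the $K$-type. (Rescale each $H(\gamma)$ independently and the module is unchanged while such norms move arbitrarily.) Ingredient (a) is also problematic: the Cayley--Hamilton argument behind Corollary~\ref{compactness} needs continuity of $x\mapsto d\pi_x(T)|_L$ for $T\in Z(\mathfrak g)$ on a finite sum of $K$-types, which is a special case of exactly what you are trying to prove; and it requires finite generation at some base point, which the lemma does not assume.

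The paper bypasses all of this with the integrated representation. For $f\in C_c^\infty(G)$ and $v\in H$ one has the identity
\[
d\pi_x(Y)\,\pi_x(f)v=\pi_x(Yf)v,
\]
with $Yf\in C_c^\infty(G)$; strong continuity of the family plus the local uniform operator bound (Lemma~\ref{loc-umod-growth}) make $x\mapsto\pi_x(h)v$ continuous for every $h\in C_c^\infty(G)$, so the differentiation in $\mathfrak g$ is transferred to the test function and no second-derivative bound is ever needed. Since $H(\gamma)=\pi_x(C_c^\infty(\gamma;G))H$ and the $K$-action is $x$-independent, this gives continuity of $x\mapsto d\pi_x(y)v$ for $v\in H_K$ directly. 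I would recast your argument along these lines rather than trying to manufacture uniform $C^2$ control on $t\mapsto\pi_x(\exp tY)v$.
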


\begin{proof}
If $\gamma\in\hat{K}$ then $C_{c}^{\infty}(\gamma;G)$ denotes the space of all
$f\in C_{c}^{\infty}(G)$ such that
\[
\int_{K}\chi_{\gamma}(k)f(k^{-1}g)dk=f(g),g\in G
\]
with $\chi_{\gamma}$ the character of $\gamma$. Then
\[
H(\gamma)=\pi_{x}(C_{c}^{\infty}(\gamma;G))H.
\]
We also note that if $Y\in\mathfrak{g},f\in C_{c}^{\infty}(\gamma;G)$ and
$v\in H$ then
\[
d\pi_{x}(Y)\pi_{x}(f)v=\pi_{x}(Yf)v
\]
with $Yf$ the usual action of $Y\in\mathfrak{g}$ on $C^{\infty}(G)$ as a left
invariant vector field. Thus, if $v\in H_{K}$ and $y\in U(\mathfrak{g}%
_{\mathbb{C}})$ then the map%
\[
x\longmapsto d\pi_{x}(y)v
\]
is continuous.
\end{proof}

The following lemma is Lemma 1.1.3 in \cite{RRG1-11} taking into account
dependence on parameters. The proof is essentially the same taking into
account the dependence on parameters and using the local compactness of $X.$

\begin{lemma}
Let $X$ be a locally compact metric space and let $H$ be a Hilbert space.
Assume that for each $x\in X$, $\pi_{x}:G\rightarrow GL(H)$ (bounded
invertible operators such that

1) If $\omega\subset X$ and $\Omega\subset G$ are compact subsets of $X$ and
of $G$ respectively then there exists a constant $C_{\omega,\Omega}$ such that
$\left\Vert \pi_{x}(g)\right\Vert \leq C_{\omega,\Omega}$ for $x\in\omega
,g\in\Omega.$

2) The map $x,g\rightarrow\left\langle \pi_{x}(g)v,w\right\rangle $ is
continuous for all $v,w\in H$.

Then $(\pi,H)$ is a continuous family of representations of $G$ based on $X$
and conversely if $(\pi,H)$ is a continuous family of Hilbert representations
then 1) and 2) \ are satisfied.
\end{lemma}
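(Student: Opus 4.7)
The converse direction is immediate. Assuming $(\pi,H)$ is a continuous family, for any $v,w\in H$ the map $(x,g)\mapsto\langle\pi_x(g)v,w\rangle$ is the composition of a strongly continuous map with the continuous inner product, giving (2). For (1), each function $(x,g)\mapsto\|\pi_x(g)v\|$ is continuous on the compact set $\omega\times\Omega$ and hence bounded there, so the Banach--Steinhaus theorem produces the uniform operator-norm bound $C_{\omega,\Omega}$.

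For the forward direction, I would adapt the proof of the unparametrized Lemma 1.1.3 of \cite{RRG1-11}, controlling the parameter dependence by local compactness. The first observation is that applying that lemma to each $\pi_x$ separately (using (1) and (2) restricted to $\{x\}\times G$) yields that each $\pi_x$ is a strongly continuous representation of $G$. The remaining task is to upgrade this to joint norm continuity of $(x,g)\mapsto\pi_x(g)v$. Fix $(x_0,g_0)\in X\times G$ and $v\in H$; by local compactness choose compact neighborhoods $\omega\ni x_0$ and $\Omega\ni g_0$ with uniform operator bound $C=C_{\omega,\Omega}$ from (1).

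The central device is convolution smoothing. For $\phi\in C_c(G)$, define $\pi_x(\phi)w=\int_G\phi(g)\pi_x(g)w\,dg$ as a Pettis integral, well-defined by (2) and absolutely bounded by (1), satisfying $\|\pi_x(\phi)w\|\leq C\|\phi\|_1\|w\|$ for $x\in\omega$ and $\mathrm{supp}(\phi)\subset\Omega$. The identity $\pi_x(g)\pi_x(\phi)=\pi_x(L_g\phi)$, with $L_g\phi(h)=\phi(g^{-1}h)$, verified by testing against matrix coefficients, together with norm continuity of $g\mapsto L_g\phi$ in $L^1(G)$ yields norm continuity of $g\mapsto\pi_x(g)\pi_x(\phi)w$ uniformly for $x\in\omega$. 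Dominated convergence applied inside the Pettis integral (invoking (1) and (2)) also shows that $x\mapsto\pi_x(\phi)w$ is weakly continuous, which combined with an approximate identity argument upgrades to norm continuity on the dense subspace of smoothed vectors.

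A three-epsilon argument then closes the proof. Given $\epsilon>0$, approximate $v\approx\pi_{x_0}(\phi_n)v$ in norm (using strong continuity of $\pi_{x_0}$ applied to an approximate identity $\phi_n$ at $e$), use the uniform bound $C$ to control $\|\pi_x(g)(v-\pi_{x_0}(\phi_n)v)\|$ on $\omega\times\Omega$, and handle the smoothed piece $\pi_x(g)\pi_{x_0}(\phi_n)v-\pi_{x_0}(g_0)\pi_{x_0}(\phi_n)v$ via the uniform $g$-continuity above together with the weak-to-strong upgrade in $x$. The main technical obstacle is precisely this last step: controlling the mixed-parameter expression requires feeding the convolution identity back through a second $\phi$ and synchronizing the two approximate identities so that the error terms compose correctly. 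Local compactness of $X$ is essential throughout, providing the uniform bound $C$ on $\omega\times\Omega$ without which the three-epsilon splitting cannot be closed.
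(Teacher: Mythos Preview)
Your proposal is correct and follows essentially the same approach as the paper. The paper does not give a detailed argument at all: it simply states that the proof is the same as that of Lemma 1.1.3 in \cite{RRG1-11}, adjusted to track the parameter $x$ and invoking local compactness of $X$ for the uniform bounds---which is exactly the route you outline, with the convolution smoothing and three-epsilon splitting being the standard ingredients of that proof.
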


An immediate corollary is

\begin{corollary}
\label{Conj-dual}Let $(\pi,H)$ be an admissible, continuous family of Hilbert
representations of $G$ based on the locally compact metric space $X$. Set for
each $x\in X$, $\hat{\pi}_{x}(g)=\pi_{x}(g^{-1})^{\ast}$ then $(\hat{\pi},H)$
is a continuous, admissible family of Hilbert representations of of $G$ based
on $X$.
\end{corollary}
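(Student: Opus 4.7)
The plan is to invoke the preceding characterization lemma: it suffices to verify that $(\hat{\pi},H)$ satisfies the local boundedness condition (1) and the matrix-coefficient continuity condition (2), and then to check separately that the two admissibility requirements hold. Note first that for each fixed $x$ the map $\hat{\pi}_x$ is indeed a homomorphism into the bounded invertible operators on $H$, since
\[
\hat{\pi}_x(gh)=\pi_x((gh)^{-1})^{\ast}=\bigl(\pi_x(h^{-1})\pi_x(g^{-1})\bigr)^{\ast}=\pi_x(g^{-1})^{\ast}\pi_x(h^{-1})^{\ast}=\hat{\pi}_x(g)\hat{\pi}_x(h).
\]

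For condition (1), observe that if $\omega\subset X$ and $\Omega\subset G$ are compact then $\Omega^{-1}$ is also compact, and
\[
\|\hat{\pi}_x(g)\|=\|\pi_x(g^{-1})^{\ast}\|=\|\pi_x(g^{-1})\|\le C_{\omega,\Omega^{-1}}
\]
for all $(x,g)\in\omega\times\Omega$, using the hypothesis that $\pi$ satisfies (1). For condition (2), given $v,w\in H$, rewrite
\[
\langle\hat{\pi}_x(g)v,w\rangle=\langle\pi_x(g^{-1})^{\ast}v,w\rangle=\overline{\langle\pi_x(g^{-1})w,v\rangle}.
\]
The map $(x,g)\mapsto(x,g^{-1})$ on $X\times G$ is continuous, the matrix coefficient $(x,g)\mapsto\langle\pi_x(g)w,v\rangle$ is continuous by hypothesis, and complex conjugation is continuous, so the composite is continuous. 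The characterization lemma then produces the continuous family structure on $(\hat{\pi},H)$.

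For admissibility, $\hat{\pi}_x|_K(k)=\pi_x(k^{-1})^{\ast}$ depends on $x$ only through $\pi_x|_K$, which is independent of $x$ by hypothesis; hence $\hat{\pi}_x|_K$ is likewise independent of $x$. The $K$-isotypic decomposition of $H$ under $\hat{\pi}_x|_K$ is the isotypic decomposition under the (conjugate-dual of the) fixed $K$-representation $\pi|_K$, so for each $\gamma\in\hat{K}$ the multiplicity space has the same (finite) dimension as the multiplicity of the contragredient $K$-type in $\pi|_K$.

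There is no real obstacle here; the proof is a mechanical verification of the hypotheses of the previous lemma, which is exactly why the author labelled the statement an \emph{immediate} corollary. The only point worth flagging is that one must replace $\Omega$ by $\Omega^{-1}$ in the bound of (1) and insert the map $g\mapsto g^{-1}$ in the continuity check of (2); both are trivially continuous on the locally compact group $G$.
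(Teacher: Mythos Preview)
Your proof is correct and is exactly the verification the paper has in mind: the author gives no explicit argument and simply labels the statement an ``immediate corollary'' of the preceding characterization lemma, so checking conditions (1) and (2) for $\hat{\pi}$ together with the admissibility conditions, as you do, is precisely the intended route.
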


Let $\left\Vert g\right\Vert $ be a norm on $G,$ that is a continuous function
from \thinspace$G$ to $\mathbb{R}_{>0}$ (the positive real numbers) such that

1. $\left\Vert k_{1}gk_{2}\right\Vert =\left\Vert g\right\Vert ,k_{1},k_{2}\in
K,g\in G$,

2. $\left\Vert xy\right\Vert \leq\left\Vert x\right\Vert \left\Vert
y\right\Vert ,x,y\in G$,

3. The sets $\left\Vert g\right\Vert \leq r<\infty$ are compact.

4. If $X\in\mathfrak{p}$ then if $t\geq0$ then $\log\left\Vert \exp
tX\right\Vert =t\log\left\Vert \exp X\right\Vert .$

If $(\sigma,V)$ is a finite dimensional representation of $G$ with compact
kernel and if $\left\langle ...,...\right\rangle $ is an inner product on $V$
that is $K$--invariant then if $\left\Vert \sigma(g)\right\Vert $is the
operator norm of $\sigma(g)$ then $\left\Vert g\right\Vert =\left\Vert
\sigma(g)\right\Vert $ is a norm on $G$. Taking the representation on $V\oplus
V$ given by%
\[
\left[
\begin{array}
[c]{cc}%
\sigma(g) & \\
& \sigma(g^{-1})^{\ast}%
\end{array}
\right]
\]
then we may (and will) assume in addition

5. $\left\Vert g\right\Vert =\left\Vert g^{-1}\right\Vert $.

Note that 5. implies that $\left\Vert g\right\Vert \geq1$.

Using the same proof as Lemma 2.A.2.1 in \cite{RRG1-11}(which we give for the
sake of completeness) one can prove

\begin{lemma}
\label{loc-umod-growth}If $(\pi,H)$ is a continuous family of Hilbert
representations modeled on $X$ and if $\omega$ is a compact subset of $x$ then
there exists constants $C_{\omega},r_{\omega}$ such that%
\[
\left\Vert \pi_{x}(g)\right\Vert \leq C_{\omega}\left\Vert g\right\Vert
^{r_{\omega}}.
\]

\end{lemma}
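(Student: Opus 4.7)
The plan is to imitate the argument of Lemma~2.A.2.1 in \cite{RRG1-11}, inserting a parameter and using the joint bound in condition~1) of the previous characterization lemma to obtain uniformity in $x\in\omega$.

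First I would use the characterization lemma just stated: since $\omega\subset X$ is compact and the set $\Omega_r=\{g\in G\,|\,\|g\|\leq e^{r}\}$ is compact in $G$ for every $r>0$ (by property~3 of the norm), condition~1) provides a constant $C=C_{\omega,r}$ such that
\[
\|\pi_x(g)\|\leq C\qquad\text{for all }x\in\omega,\ g\in\Omega_r.
\]
In particular, since $K\subset\Omega_0$ (norms are $\geq 1$ by property~5), the same $C$ bounds $\|\pi_x(k)\|$ uniformly for $x\in\omega$ and $k\in K$.

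Next I would use the Cartan decomposition $G=K(\exp\mathfrak{p})K$: any $g\in G$ can be written $g=k_1(\exp X)k_2$ with $k_i\in K$ and $X\in\mathfrak{p}$. By property~1 of the norm, $\|g\|=\|\exp X\|$; set $s=\log\|\exp X\|\geq 0$ (so $\|g\|=e^{s}$). Pick the least positive integer $n$ with $n\geq s/r$, so $n\leq s/r+1$. Property~4 gives $\|\exp(X/n)\|=e^{s/n}\leq e^{r}$, hence $\exp(X/n)\in\Omega_r$. Since $\exp X=(\exp(X/n))^{n}$,
\[
\|\pi_x(\exp X)\|\leq\|\pi_x(\exp(X/n))\|^{n}\leq C^{n}\leq C^{1+s/r}=C\,\|g\|^{(\log C)/r}.
\]
Combining with the $K$-bound yields
\[
\|\pi_x(g)\|\leq\|\pi_x(k_1)\|\,\|\pi_x(\exp X)\|\,\|\pi_x(k_2)\|\leq C^{3}\|g\|^{(\log C)/r},
\]
so $C_\omega=C^{3}$ and $r_\omega=(\log C)/r$ do the job.

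The whole argument is a straightforward transcription of the classical proof, so there is no serious obstacle. The one place where the parameter matters is the initial uniform bound on $\Omega_r$: this is exactly where we use the joint strong-continuity of $(x,g)\mapsto\pi_x(g)$ packaged in condition~1) of the previous lemma, together with compactness of $\omega$ and of $\Omega_r$. Once that uniform $C$ is in hand, the rest of the proof is purely about the norm structure on $G$ and does not see $x$ at all, so the constants $C_\omega,r_\omega$ depend only on $\omega$ as required.
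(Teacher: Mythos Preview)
Your proof is correct and follows essentially the same route as the paper's: obtain a uniform operator-norm bound on a compact set $\omega\times\Omega$ in $X\times G$, then use property~4 of the norm to write $\exp X$ as a power of an element in $\Omega$, and finish via the $K$-decomposition. The only cosmetic differences are that you invoke condition~1) of the preceding lemma for the initial uniform bound (the paper reproves it via strong continuity and the uniform boundedness principle), and you use $g=k_1(\exp X)k_2$ where the paper uses the polar form $g=k\exp X$; neither changes the substance of the argument.
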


\begin{proof}
Let $B_{1}=\{g\in G|\left\Vert g\right\Vert \leq1\}$. Then if $v\in H$ and
$(x,g)\in\omega\times B_{1}$ then $\sup\left\Vert \pi_{x}(g)v\right\Vert
<\infty$ by strong continuity. The principle of uniform boundedness (c.f.
\cite{ReedSimon},III.9,p.81)\ implies that there exists a constant, $R$, such
that $\left\Vert \pi_{x}(g)\right\Vert \leq R$ for $(x,g)\in\omega\times
B_{1}$. Let $r=\log R$. In particular if $k\in K$ then $\left\Vert \pi
_{x}(kg)\right\Vert \leq\left\Vert \pi_{x}(k)\right\Vert \left\Vert \pi
_{x}(g)\right\Vert \leq R\left\Vert \pi_{x}(g)\right\Vert $. Also,
\[
\left\Vert \pi_{x}(g)\right\Vert =\left\Vert \pi_{x}(k^{-1})\pi_{x}%
(kg)\right\Vert \leq R\left\Vert \pi_{x}(kg)\right\Vert .
\]
Thus for all $k\in K,g\in G$%
\[
R^{-1}\left\Vert \pi_{x}(g)\right\Vert \leq\left\Vert \pi_{x}(kg)\right\Vert
\leq R\left\Vert \pi_{x}(g)\right\Vert .
\]
Let $X\in\mathfrak{p}$, $X\neq0$ and let $j$ be such that
\[
j<\log\left\Vert \exp X\right\Vert \leq j+1
\]
then%
\[
\log\left\Vert \pi_{x}(\exp X)\right\Vert \leq\log\left\Vert \pi_{x}%
(\exp(\frac{X}{j+1})\right\Vert ^{j+1}\leq r(j+1)\leq r+r\log\left\Vert \exp
X\right\Vert .
\]
Thus%
\[
\left\Vert \pi_{x}\left(  \exp X\right)  \right\Vert \leq R\left\Vert \exp
X\right\Vert ^{r},X\in\mathfrak{p.}%
\]
If $g\in G$ then $g=k\exp X$ with $k\in K$ and $X\in\mathfrak{p}$ so%
\[
\left\Vert \pi_{x}(g)\right\Vert =\left\Vert \pi_{x}(k\exp X)\right\Vert \leq
R^{2}\left\Vert \exp X\right\Vert ^{r}=R^{2}\left\Vert g\right\Vert ^{r}.
\]
Take $C_{\omega}=R^{2}$ and $r_{\omega}=r$.
\end{proof}

We define $\mathcal{S}(G)$ to be the space of all $f\in C^{\infty}(G)$ such
that of $x\in U(\mathfrak{g}_{\mathbb{C}})$ (thought of as a left invariant
differential operator) and $r>0$ then%
\[
p_{r,x}(f)=\sup_{g\in G}\left\vert xf(g)\right\vert \left\Vert g\right\Vert
^{r}<\infty.
\]
$\mathcal{S}(G)$ is a Fr\'{e}chet (using the semi-norms $p_{r,x}$) algebra
(under convolution) of functions on $G.$

Lemma 2.A.2.4 in \cite{RRG1-11} implies that there exists $d>0$ such that%
\[
\int_{G}\frac{dg}{\left\Vert g\right\Vert ^{d}}<\infty.
\]
This implies that $\mathcal{S}(G)$ acts on any Banach representation,
$(\pi,V)$ of $G$ via%
\[
\pi(f)=\int_{G}f(g)\pi(g)dg.
\]

Recall that a pair $(\pi,V)$ of a Fr\'{e}chet space, $V$, and a representation
of $G$, $\pi$, on $V$ is called a smooth Fr\'{e}chet representation of
moderate growth if the map $g\longmapsto\pi(g)v$ is $C^{\infty}$ and if $p$ is
a continuous seminorm on $V$ then there exists a continuous seminorm $q$ on
$V$ and $r$ such that%
\[
p(\pi(g)v)\leq\left\Vert g\right\Vert ^{r}q(v)\text{.}%
\]
This implies that a smooth Fr\'{e}chet representation of moderate growth is an
$\mathcal{S}(G)$--module. A smooth Fr\'{e}chet representation of moderate
growth is defined to be admissible if the $(\mathfrak{g},K)$--module $V_{K}$
is admissible. It is said to be of Harish-Chandra class if $V_{K}$ is
admissible and finitely generated. Let $\mathcal{HF}(G)$ be the category of
smooth Fr\'{e}chet representations of moderate growth in the Harish-Chandra class.

The CW theorem

\begin{theorem}
The functor $V\rightarrow V_{F}$ from $\mathcal{HF}(G)$ to $H\mathcal{(}%
\mathfrak{g},K)$ is an isomorphism of categories.
\end{theorem}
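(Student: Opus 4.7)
The plan is to construct an inverse functor $W\mapsto W^{\infty}$ from $H(\mathfrak{g},K)$ to $\mathcal{HF}(G)$ by exploiting the embeddings of J--modules into smoothly induced representations. For $R\in W(K,\mathbf{D})$, Theorem \ref{Imbedding} applied to a single point yields an integer $k$ and an injective $(\mathfrak{g},K)$--morphism $T_{k}\colon J(R)\hookrightarrow I(\sigma_{k,R}|_{M})$. The smooth induction $I^{\infty}(\sigma_{k,R}|_{M})$ is a smooth Fr\'{e}chet representation of moderate growth with underlying Harish--Chandra module $I(\sigma_{k,R}|_{M})$. I define $J(R)^{\infty}$ to be the closure of $T_{k}(J(R))$ in $I^{\infty}(\sigma_{k,R}|_{M})$; as a closed $G$--invariant subspace it is smooth Fr\'{e}chet of moderate growth, and by admissibility its $K$--finite vectors are exactly $T_{k}(J(R))\cong J(R)$.

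For a general $V\in H(\mathfrak{g},K)$, I take a J--resolution $J(R_{1})\to J(R_{0})\to V\to 0$ as supplied by the proposition following Theorem \ref{decompositions}. The connecting $(\mathfrak{g},K)$--morphism will extend to a continuous $G$--equivariant map $\Phi\colon J(R_{1})^{\infty}\to J(R_{0})^{\infty}$, and I set $V^{\infty}=J(R_{0})^{\infty}/\overline{\Phi(J(R_{1})^{\infty})}$. Exactness of the $K$--finite vector functor on admissible Fr\'{e}chet representations then gives $(V^{\infty})_{K}\cong V$, so $(\cdot)_{K}$ is essentially surjective. Full faithfulness splits into two parts: injectivity of $\mathrm{Hom}_{\mathcal{HF}(G)}(V,W)\to\mathrm{Hom}_{H(\mathfrak{g},K)}(V_{K},W_{K})$ follows from density of $K$--finite vectors in every smooth Fr\'{e}chet globalization; surjectivity requires lifting a $(\mathfrak{g},K)$--morphism $\varphi\colon V_{K}\to W_{K}$ to the J--resolutions and extending each level continuously.

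The main obstacle is the automatic continuity statement underlying both the J--module extension and the well--definedness of $V^{\infty}$: any $(\mathfrak{g},K)$--morphism between two J--modules extends uniquely to a continuous $G$--equivariant map between their Fr\'{e}chet globalizations. The strategy is to embed source and target into smoothly induced representations via Theorem \ref{Imbedding}, then use the finite length and admissibility of induced representations combined with Corollary \ref{finitegen} to bound the set of $K$--types required for uniform generation. This reduces the extension problem to a finite--dimensional statement about $M$--intertwiners via Frobenius reciprocity, from which one deduces that every algebraic $(\mathfrak{g},K)$--map is the restriction of a continuous $G$--equivariant map. Applying the same extension principle to two different choices of J--resolution and embedding shows that $V^{\infty}$ is independent of all choices, completing the verification that $(\cdot)_{K}$ is an equivalence of categories.
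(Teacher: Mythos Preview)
Your proposal attempts something the paper does not: an independent proof of the Casselman--Wallach theorem via $J$--modules. In the paper this theorem is \emph{not} proved from the $J$--module machinery at all; it is deduced in a few lines from the standard formulation of the CW theorem, quoted from \cite{RRG1-11}, Theorem~11.6.7(2), which asserts that any $(\mathfrak{g},K)$--morphism between the $K$--finite parts of two objects of $\mathcal{HF}(G)$ extends to a continuous $G$--morphism with closed, topologically complemented image. From this the paper observes that two globalizations with the same underlying Harish--Chandra module sit inside $\prod_{\gamma}V(\gamma)$ and are identified by the identity map, hence coincide; essential surjectivity is obtained by embedding $V$ into the $K$--finite vectors of a Hilbert representation and taking $C^{\infty}$ vectors. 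The $J$--module technology is reserved for the later, parameter--dependent results (Theorems~\ref{globalize-J}, \ref{hilb-main}), not for this statement.

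Beyond the mismatch of scope, your sketch has a genuine gap at the step you yourself flag as the main obstacle. The assertion that ``every algebraic $(\mathfrak{g},K)$--map is the restriction of a continuous $G$--equivariant map'' between the Fr\'{e}chet globalizations of $J$--modules is precisely the content of the CW theorem for this class, and your proposed reduction does not establish it. Frobenius reciprocity controls $(\mathfrak{g},K)$--maps \emph{into} a full induced representation $I^{\infty}(\sigma)$, but your $J(R)^{\infty}$ is only the closure of a submodule of such a space. A $(\mathfrak{g},K)$--map $J(R_1)\to J(R_0)$ gives, via the embedding of the target, a map $J(R_1)\to I(\sigma_{k,R_0}|_{M})$, which by Frobenius reciprocity does extend continuously to $I^{\infty}(\sigma_{k,R_1}|_{M})\to I^{\infty}(\sigma_{k,R_0}|_{M})$; but you must then show that this extension carries the \emph{closure} of $T_{k}(J(R_1))$ into the \emph{closure} of $T_{k}(J(R_0))$, and that the resulting map is continuous for the subspace topologies. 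Nothing in Corollary~\ref{finitegen} or the finite--length structure of induced representations gives this; it is exactly the closed--range/topological--summand statement that the paper imports from \cite{RRG1-11}. Without it, neither the well--definedness of $V^{\infty}$ (independence of resolution and embedding) nor the lifting of morphisms goes through.
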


We will prove this as a consequence of the usual statement of the theorem is
(see \cite{RRG1-11} Theorem 11.6.7 (2))

\begin{theorem}
If $(\pi_{i},V_{i})\in\mathcal{HF}(G)$, for $i=1,2$ and if $T\in
\mathrm{Hom}_{H\mathcal{(}\mathfrak{g},K)}(\left(  V_{1}\right)  _{K},\left(
V_{2}\right)  _{K})$ then $T$ extends to a continuous element of
$\mathrm{Hom}_{\mathcal{FH(}G)}(V_{1},V_{2})$ with closed image that is a
topological summand.
\end{theorem}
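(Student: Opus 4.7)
The plan is to follow the classical Casselman--Wallach strategy: reduce to principal series via the paper's J--module machinery, extend $T$ using automatic continuity for smooth moderate growth representations, and deduce the topological summand property from the finite length of Harish--Chandra modules together with Harish--Chandra's asymptotic analysis of matrix coefficients.

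First I would realize both $V_1$ and $V_2$ concretely. Applied at a single point, Theorem \ref{Imbedding} together with Theorem \ref{J-morph} produces for each $V_i$ a surjection from a J--module onto $(V_i)_K$ whose image embeds into a principal series $I(\sigma_{k_i})$ for $k_i$ large; the associated smooth globalization is then the corresponding smooth subquotient of $I^{\infty}(\sigma_{k_i})$. This reduces the problem to the case where $V_1,V_2$ are finite length subquotients of explicit smooth induced representations.

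Second, I would extend $T$ using matrix coefficients paired against the conjugate dual family of Corollary \ref{Conj-dual}. For $v\in(V_1)_K$ and $\tilde w\in(\widetilde{V_2})_K$ the function $g\mapsto\langle\pi_2(g)Tv,\tilde w\rangle$ is a $K$--finite smooth function on $G$ of moderate growth by Lemma \ref{loc-umod-growth}. Convolving with an approximate identity from $\mathcal{S}(G)$ shows that the assignment $v\mapsto Tv$ is continuous from the Fréchet topology on $(V_1)_K$ (inherited from $V_1$) to the corresponding topology on $(V_2)_K$, so $T$ extends uniquely to a continuous $G$--map $\bar T:V_1\to V_2$; this is the standard automatic continuity step. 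The image $\bar T(V_1)$ is closed because $\bar T$ factors through $V_1/\ker\bar T$, which again lies in $\mathcal{HF}(G)$ since the category is stable under subquotients (finite length follows from Corollary \ref{compactness} and Theorem \ref{loc-finite}).

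For the topological summand property, I would exhibit a continuous $(\mathfrak{g},K)$--equivariant retraction. Choose an algebraic $(\mathfrak{g},K)$--splitting on K--finite vectors of the short exact sequence $0\to\ker T\to (V_1)_K\to T(V_1)_K\to 0$ compatible with the composition series of $V_1$, and use Harish--Chandra's leading exponent decomposition of matrix coefficients along $A$ to show that the resulting algebraic projector extends to a continuous $G$--equivariant projector on $V_1$; this projects onto a prescribed set of leading exponents and so is automatically moderate growth. The main obstacle is precisely this splitting step: automatic continuity and closedness of the image follow from fairly general principles, but producing a \emph{continuous} complement to $\ker\bar T$ requires the full asymptotic expansion apparatus of \cite{RRG1-11}, Chapter 11 — analyzing how K--finite matrix coefficients decay along the walls of $A^+$ and bundling these asymptotics into canonical projectors. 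This is precisely why the paper cites rather than re--derives the statement at this point.
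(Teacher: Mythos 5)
The paper does not prove this theorem: it is quoted verbatim from \cite{RRG1-11}, Theorem 11.6.7 (2), and serves only as the input from which the category isomorphism is derived in the next paragraph. You correctly anticipated this at the end of your proposal, so there is nothing to compare at the level of the paper's argument. But since you did sketch a proof, the substance of the sketch deserves comment, and there is a genuine error in it.

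Your treatment of the ``topological summand'' clause is misconceived. You propose to ``choose an algebraic $(\mathfrak{g},K)$--splitting on $K$--finite vectors of the short exact sequence $0\to\ker T\to(V_1)_K\to T(V_1)_K\to 0$'' and then promote it to a continuous $G$--equivariant projector. Two things go wrong. First, such an algebraic $(\mathfrak{g},K)$--splitting generally does not exist: the category $H(\mathfrak{g},K)$ is not semisimple, and a non--split extension of two irreducible Harish-Chandra modules admits no $(\mathfrak{g},K)$--equivariant retraction, no matter how carefully one arranges leading exponents. Second, even when a splitting exists on the algebraic side, the clause in the theorem is not asking for a $G$--equivariant complement; ``topological summand'' here means that $\bar T(V_1)$ has a closed \emph{topological vector space} complement in $V_2$, i.e.\ a continuous linear (not $G$--intertwining) projection $V_2\to\bar T(V_1)$. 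If an equivariant complement did exist, the globalization functor would split every short exact sequence, which is false. The actual proof of the summand property in \cite{RRG1-11} goes through the nuclear Fréchet structure of the spaces (every quotient map of such spaces admits a continuous linear right inverse once exactness of the globalization functor is in hand, which is itself established by the asymptotic analysis); it is not obtained by selecting leading-exponent projectors equivariantly. Your automatic-continuity and closed-image steps are in the right spirit but elide the actual content of Chapter 11 of \cite{RRG1-11}: proving that a $(\mathfrak{g},K)$--map between $K$--finite vectors is automatically continuous for the Fréchet topologies is the deep part, and ``convolving with an approximate identity'' is the corollary of that theorem rather than its proof.
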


If $V_{1},V_{2}\in\mathcal{HF}(G)$ have the property that $\left(
V_{1}\right)  _{K}=\left(  V_{2}\right)  _{K}=V$ then one has
\[
V_{i}\subset\prod_{\gamma\in\hat{K}}V(\gamma),i=1,2.
\]
As the formal sums that converge relative to the continuous seminorms endowing
the topology on $V_{1}$ and $V_{2}$ respectively. The identity map on $V$
induces an isomorphism of $V_{1}$ and $V_{2}$. But this is given by the
identity map on $\prod_{\gamma\in\hat{K}}V(\gamma)$. Hence $V_{1}=V_{2}$. This
implies the isomorphism of categories.

The inverse functor can be seen as follows. Let $V\in$ $H\mathcal{(}%
\mathfrak{g},K)$ and let $(\pi,H)$ be a Hilbert representation of $G$ such
that $(d\pi,H_{K})$ is equivalent to $V$. Let $T\in\mathrm{Hom}_{H\mathcal{(}%
\mathfrak{g},K)}(V,H_{K})$ give the isomorphism. Let $\left\langle
...,...\right\rangle $ the Hilbert space structure on $H$ and let $\left(
v,w\right)  =\left\langle Tv,Tw\right\rangle $. If $x\in U(\mathfrak{g)}$ set
$p_{x}(v)=\sqrt{(xv,xv)}$and
\[
\overline{V}=\{\{v_{\gamma}\}\in\prod_{\gamma\in\hat{K}}V(\gamma)|\sum
_{\gamma\in\hat{K}}p_{x}(v_{\gamma})^{2}<\infty\}.
\]
Then $T$ extends to an isomorphism of $\bar{V}$ onto $H^{\infty}$. Thus
defining $\mu(g)=T^{-1}\pi(g)T$ on $\bar{V}$ we have $(\mu,\bar{V}%
)\in\mathcal{HF}(G)$ and $\bar{V}_{K}=V$. The uniqueness implies that
$V\rightarrow\bar{V}$ defines the inverse functor.

Another corollary of the CW theorem is (see \cite{HOW} Theorem 11.8.2)

\begin{theorem}
If $(\pi,V)\in\mathcal{HF}(G)$ and if $v\in V$ then $\pi(\mathcal{S}(G))v$ is
closed in $V$ and a topological summand.
\end{theorem}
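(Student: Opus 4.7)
The plan is to realize $\pi(\mathcal{S}(G))v$ as the CW globalization of a suitable finitely generated $(\mathfrak{g},K)$-submodule $N\subseteq V_K$, so that the $\mathrm{Hom}$-extension version of the CW theorem stated above supplies closedness and the topological-summand property. Let $(\sigma,H)$ be an admissible Hilbert model with $V=H^\infty$ and $H_K=V_K$. For each $\gamma\in\hat K$ let $v_\gamma\in V(\gamma)$ denote the $\gamma$-isotypic projection of $v$; smoothness of $v$ gives $v=\sum_\gamma v_\gamma$ in $V$. Let $N\subseteq V_K$ be the $(\mathfrak{g},K)$-submodule generated by $\{v_\gamma:\gamma\in\hat K\}$. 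Since $V_K$ is admissible and finitely generated it has finite length, so $N$ itself lies in $H(\mathfrak{g},K)$; by the CW theorem the inclusion $N\hookrightarrow V_K$ extends to a continuous $G$-equivariant embedding $\bar N\hookrightarrow V$ with closed image, a topological summand of $V$, which I identify with $\bar N$.

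The first step is the inclusion $N\subseteq\pi(\mathcal{S}(G))v$. Choose an approximate identity $\eta_\epsilon\in C_c^\infty(G)$, and let $\chi_\gamma\in C(K)$ be the character of $\gamma$, viewed as a compactly supported distribution on $G$. Then $\chi_\gamma*\eta_\epsilon\in C_c^\infty(G)\subseteq\mathcal{S}(G)$ and
\[
\pi(\chi_\gamma*\eta_\epsilon)v=E_\gamma\pi(\eta_\epsilon)v\longrightarrow v_\gamma\quad\text{in }V,
\]
with all terms lying in the linear subspace $\pi(\mathcal{S}(G))v\cap V(\gamma)$ of the finite-dimensional $V(\gamma)$, which is automatically closed in $V(\gamma)$; hence $v_\gamma\in\pi(\mathcal{S}(G))v$. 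Using $\pi(k)\pi(f)=\pi(L_{k^{-1}}f)$ and $\pi(X)\pi(f)=-\pi(\widetilde{X}f)$ (with $\widetilde X$ the right-invariant vector field associated with $X\in\mathfrak{g}$), one checks that $\pi(\mathcal{S}(G))v$ is $(\mathfrak{g},K)$-invariant, so it contains the whole $N$. In particular $v=\sum_\gamma v_\gamma\in\bar N$, hence $\pi(\mathcal{S}(G))v\subseteq\bar N$. The same isotypic-projection argument identifies $N$ with $\pi(\mathcal{S}(G)_K)v$, where $\mathcal{S}(G)_K$ denotes the space of left-$K$-finite Schwartz functions.

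The reverse inclusion $\bar N\subseteq\pi(\mathcal{S}(G))v$ is the technical heart of the argument and the main obstacle. It rests on a refinement of the Dixmier--Malliavin theorem valid in $\mathcal{HF}(G)$: every vector of a module $W\in\mathcal{HF}(G)$ is a finite sum $\sum_i\pi(f_i)u_i$ with $f_i\in\mathcal{S}(G)$ and $u_i\in W_K$. Granting this and applied to $W=\bar N$, together with $\bar N_K=N=\pi(\mathcal{S}(G)_K)v$, any $w\in\bar N$ may be written as $\sum_i\pi(f_i)\pi(g_i)v=\pi\bigl(\sum_i f_i*g_i\bigr)v\in\pi(\mathcal{S}(G))v$ (using $\mathcal{S}(G)*\mathcal{S}(G)_K\subseteq\mathcal{S}(G)$). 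The refinement needed goes beyond the classical Dixmier--Malliavin factorization by demanding $K$-finite right factors; it is established by combining the classical factorization with the finite multiplicity of $K$-types in $\bar N$ and elliptic regularity for a Casimir-type element on $\bar N$, and it is this refined factorization that carries the real content of the theorem. Once it is in hand, $\pi(\mathcal{S}(G))v=\bar N$, and the closedness and summand properties follow from CW applied to $N\hookrightarrow V_K$.
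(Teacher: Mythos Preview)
The paper does not prove this statement; it records it as a corollary of the CW $\mathrm{Hom}$-extension theorem and points to Theorem 11.8.2 of \cite{RRG1-11} (the printed citation to \cite{HOW} is a misprint). So there is no in-paper argument to compare against, only the assertion that it follows from CW.

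Your reduction through $N\subseteq W:=\pi(\mathcal{S}(G))v\subseteq\bar N$ and $W_K=N=\pi(\mathcal{S}(G)_K)v$ is correct. The gap is the last step. The ``refined Dixmier--Malliavin'' you invoke --- that every $w\in\bar N$ is a finite sum $\sum_i\pi(f_i)u_i$ with $u_i\in\bar N_K$ --- is essentially the content of the Corollary that the paper states \emph{after} the Theorem as a consequence of it; appealing to it here is circular unless you prove it independently. Your sketch does not: classical Dixmier--Malliavin gives $w=\sum_i\pi(\phi_i)\pi(\psi_i)w$ with $\pi(\psi_i)w\in\bar N$ but not $K$-finite, and expanding each $\pi(\psi_i)w$ into $K$-types and then rewriting each $K$-isotypic piece as $\pi(g_{i,\gamma})v$ produces an infinite sum $\sum_\gamma\phi_i*g_{i,\gamma}$ with no evident convergence in $\mathcal{S}(G)$. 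Finite multiplicity and elliptic regularity do not by themselves close this gap.

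The device that makes the result a genuine corollary of CW is to equip $W$ with the \emph{quotient} topology from the surjection $\mathcal{S}(G)\twoheadrightarrow W$, $f\mapsto\pi(f)v$. Left translation makes $\mathcal{S}(G)$ a smooth Fr\'echet representation of moderate growth; the kernel is closed and $G$-invariant, so the quotient $W$ is again smooth Fr\'echet of moderate growth, with $K$-finite part $W\cap V_K=N\in H(\mathfrak{g},K)$. Thus $W\in\mathcal{HF}(G)$. The tautological inclusion $W\hookrightarrow V$ is then a continuous $G$-map extending $N\hookrightarrow V_K$, and the CW $\mathrm{Hom}$-extension theorem applied to this map gives directly that its image $W$ is closed in $V$ and a topological summand. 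No factorization input beyond CW itself is required.
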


\begin{corollary}
If $(\pi,H)$ is a Hilbert representation of $G$ such that $H_{K}^{\infty}\in
H\mathcal{(}\mathfrak{g},K)$ and if $H_{K}$ is generated by the subspace $U$
then $\pi(\mathcal{S}(G))U=H^{\infty}$.
\end{corollary}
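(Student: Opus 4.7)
The plan is to reduce the statement to the preceding theorem (which handles a single cyclic vector) by choosing finitely many generators from $U$ and then gluing them together with a direct-sum argument. First I would observe that $(\pi,H^{\infty})\in\mathcal{HF}(G)$: admissibility of $(\pi,H)$ makes every $K$-finite vector smooth, so $H^{\infty}_{K}=H_{K}$, which is finitely generated by hypothesis; Lemma \ref{loc-umod-growth} applied to the differentiated action supplies the required moderate-growth estimate on the continuous seminorms of $H^{\infty}$. In particular $\pi(\mathcal{S}(G))$ preserves $H^{\infty}$.

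Because $H_{K}$ is finitely generated and $U$ generates it as a $(\mathfrak{g},K)$-module, I pick finitely many $u_{1},\dots,u_{m}\in U$ that by themselves generate $H_{K}$: expand each element of any fixed finite generating set of $H_{K}$ as a $U(\mathfrak{g}_{\mathbb{C}})$-combination of finitely many vectors of $U$ and take the union. By the theorem immediately preceding the corollary, each $M_{i}:=\pi(\mathcal{S}(G))u_{i}$ is closed in $H^{\infty}$ and a topological summand, so $(\pi,M_{i})\in\mathcal{HF}(G)$. The vector $u_{i}$ itself lies in $M_{i}$: indeed $u_{i}$ is smooth (being $K$-finite), so for any approximate identity $\{\varphi_{j}\}\subset\mathcal{S}(G)$ one has $\pi(\varphi_{j})u_{i}\to u_{i}$ in the Fr\'echet topology of $H^{\infty}$, and $M_{i}$ is closed there.

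Now form the continuous $G$-equivariant summation map
\[
\Psi:M_{1}\oplus\cdots\oplus M_{m}\longrightarrow H^{\infty},\qquad(v_{1},\dots,v_{m})\longmapsto v_{1}+\cdots+v_{m},
\]
which is a morphism in $\mathcal{HF}(G)$. The CW theorem applied to $\Psi_{K}$ implies that $\Psi$ has closed image that is a topological summand of $H^{\infty}$. This image contains each $u_{i}$, hence its space of $K$-finite vectors is a $(\mathfrak{g},K)$-submodule of $H_{K}$ containing the generating set $u_{1},\dots,u_{m}$, and therefore coincides with $H_{K}$. Since $\mathrm{Im}\,\Psi$ is closed in $H^{\infty}$ and has $K$-finite part equal to $H^{\infty}_{K}$, the two must agree, so $\mathrm{Im}\,\Psi=H^{\infty}$. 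Finally
\[
H^{\infty}=\mathrm{Im}\,\Psi=\sum_{i=1}^{m}\pi(\mathcal{S}(G))u_{i}\subset\pi(\mathcal{S}(G))U\subset H^{\infty},
\]
yielding $\pi(\mathcal{S}(G))U=H^{\infty}$. The only mildly delicate step is the verification $u_{i}\in M_{i}$, where one needs the approximate identity to converge in the Fr\'echet topology of $H^{\infty}$ rather than just in the Hilbert norm; everything else is a direct application of the CW theorem and the cyclic-vector theorem that immediately precedes.
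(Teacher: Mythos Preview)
Your argument is correct and is precisely the natural elaboration of what the paper leaves implicit: the corollary is stated without proof, and the intended deduction is exactly your reduction to finitely many generators $u_{1},\dots,u_{m}\in U$ (possible since $H_{K}\in H(\mathfrak{g},K)$ is finitely generated), application of the preceding cyclic-vector theorem to each $u_{i}$, and use of the CW theorem to see that the resulting sum is closed and hence all of $H^{\infty}$. The approximate-identity verification that $u_{i}\in\pi(\mathcal{S}(G))u_{i}$ is the right way to handle that point.
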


\begin{definition}
A continuous family of objects in $\mathcal{HF}(G)$ based on the metric space
$X$ is a pair $(\pi,V)$ of a Fr\'{e}chet space $V$ and a continuous map
\[
\pi:X\times G\rightarrow\mathrm{End}(V)
\]
(here $\mathrm{End}(V)$ is the algebra of continuous operators on $V$ with the
strong topology) such that such that for each $x\in X$, if $\pi_{x}%
(g)=\pi(x,g)$ then $(\pi_{x},V)\in\mathcal{HF}(G)$. We will say that the
family has local uniform moderate growth if for each $\omega$ a compact subset
of $X$ and each continuous seminorm on $V,p,$there exists a continuous
seminorm $q_{\omega}$ on $V$ and $r_{\omega}$ such that if $v\in V $ then
\[
p(\pi_{x}(g)v)\leq q_{\omega}(v)\left\Vert g\right\Vert ^{r_{\omega}}.
\]

\end{definition}

\begin{definition}
\label{holomorphicfam}A holomorphic family of objects in $\mathcal{HF}(G)$
based on the complex manifold $X$ is a continuous family $(\pi,V)$ such that
the map $x\longmapsto\pi_{x}(g)v$ is holomorphic from $X$ to $V$ for all $g\in
G$,$v\in V$.
\end{definition}

\begin{lemma}
\label{Cont-diff}If $(\pi,H)$ is a continuous family of Hilbert
representations based on the metric space $X$ such that the representations
$(d\pi_{x},H_{K}^{\infty})\in H\mathcal{(}\mathfrak{g},K)$ and the
$K-C^{\infty}$vectors are the $G-C^{\infty}$ vectors then $(\pi,H^{\infty})$
is a continuous family of of objects in $\mathcal{HF}(G)$ based on the metric
space $X$ that is of local uniform moderate growth.
\end{lemma}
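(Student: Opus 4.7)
The plan is to identify $V:=H^{\infty}$ as a Fr\'echet space that does not depend on $x$, verify that each $(\pi_{x},V)\in\mathcal{HF}(G)$, establish local uniform moderate growth via a commutation argument with $\mathrm{Ad}$, and finally deduce continuity of the family.

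First I would fix the Fr\'echet structure on $V$. Because $\pi_{x}|_{K}$ is independent of $x$ (the family is admissible), the $K$-smooth subspace $H_{K}^{\infty}$ is independent of $x$ and carries the canonical Fr\'echet topology given by the seminorms $p_{u}(v)=\|d\pi(u)v\|$, $u\in U(\mathfrak{k}_{\mathbb{C}})$, where $d\pi:=d\pi_{x}|_{\mathfrak{k}_{\mathbb{C}}}$ does not depend on $x$. The hypothesis $H_{K}^{\infty}=H^{\infty}$ identifies this setwise with the $\pi_{x}$-smooth vectors. The $G$-$C^{\infty}$ topology associated with each $\pi_{x}$ is also Fr\'echet and a priori finer, so the open mapping theorem forces the two topologies to agree for every $x$; call the common Fr\'echet space $V$. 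That $(\pi_{x},V)\in\mathcal{HF}(G)$ for each $x$ then follows from the hypothesis $V_{K}\in H(\mathfrak{g},K)$ together with Lemma~\ref{loc-umod-growth} applied pointwise, which makes each Hilbert representation of moderate growth and hence places its smooth Fr\'echet globalization in the Harish-Chandra class.

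For local uniform moderate growth, fix a seminorm $p_{u}$ with $u\in U(\mathfrak{k}_{\mathbb{C}})$. The commutation $d\pi(u)\pi_{x}(g)=\pi_{x}(g)\,d\pi_{x}(\mathrm{Ad}(g^{-1})u)$ holds since $d\pi$ and $d\pi_{x}$ coincide on $U(\mathfrak{k}_{\mathbb{C}})$. Expanding $\mathrm{Ad}(g^{-1})u=\sum_{j}c_{j}(g)u_{j}$ in a fixed basis of $U^{\deg u}(\mathfrak{g}_{\mathbb{C}})$ gives $|c_{j}(g)|\le C\|g\|^{s}$, since the $c_{j}$ are polynomial in matrix entries of $\mathrm{Ad}(g^{-1})$. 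Combined with the uniform bound $\|\pi_{x}(g)\|\le C_{\omega}\|g\|^{r_{\omega}}$ from Lemma~\ref{loc-umod-growth}, this yields
$$p_{u}(\pi_{x}(g)v)\le C'_{\omega}\|g\|^{r_{\omega}+s}\sum_{j}\|d\pi_{x}(u_{j})v\|,\qquad x\in\omega.$$
The task reduces to showing that the family $\{d\pi_{x}(u_{j}):V\to H\}_{x\in\omega}$ of continuous linear maps is equicontinuous. By Banach--Steinhaus on the Fr\'echet space $V$, this is equivalent to the pointwise bound $\sup_{x\in\omega}\|d\pi_{x}(u_{j})v\|<\infty$ for every $v\in V$. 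For $v\in V_{K}$ the bound is immediate from the first lemma of Section~7: $(d\pi,V_{K})$ is a continuous family of $(\mathfrak{g},K)$-modules, so $x\mapsto d\pi_{x}(u_{j})v$ is continuous into $H$ and bounded on the compact set $\omega$. For general $v\in V$ one extends the bound by density of $V_{K}$ in $V$ together with the continuity of each $d\pi_{x}(u_{j})$ on $V$.

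With equicontinuity in hand, joint continuity of $(x,g)\mapsto\pi_{x}(g)v$ into $V$ is routine: continuity in $g$ for fixed $x$ is built into $\mathcal{HF}(G)$, continuity in $x$ for fixed $g$ on the dense subspace $V_{K}$ comes from Section~7, and uniform moderate growth makes $\{\pi_{x}(g):x\in\omega,g\in\Omega\}$ an equicontinuous family on $V$ that propagates continuity from $V_{K}$ to all of $V$. The main obstacle is the density bootstrap that extends the pointwise Banach--Steinhaus bound from $V_{K}$ to $V$; it must be arranged to avoid the circularity that uniform control on $V$ is precisely what one is trying to establish. Everything else is a routine assembly of the commutation formula, the group-theoretic polynomial bound on $|c_{j}(g)|$, and the uniform Hilbert-space moderate-growth estimate of Lemma~\ref{loc-umod-growth}.
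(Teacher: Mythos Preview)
Your plan is sound through the commutation step, but the density bootstrap you flag at the end is a genuine gap, not a technicality to be ``arranged.'' Pointwise boundedness of $\{d\pi_{x}(u_{j})\}_{x\in\omega}$ on the dense subspace $V_{K}$ does not imply pointwise boundedness on all of $V$ without equicontinuity, and equicontinuity is precisely the conclusion you want Banach--Steinhaus to deliver. A toy model of the obstruction: on $V=H=\ell^{2}$ with $V_{0}$ the finitely supported vectors, the rank-one operators $T_{n}v=n^{2}\langle v,e_{n}\rangle e_{n}$ are each bounded and satisfy $\sup_{n}\|T_{n}v\|<\infty$ for every $v\in V_{0}$, yet $\sup_{n}\|T_{n}v\|=\infty$ for $v=\sum_{k}k^{-1}e_{k}\in\ell^{2}$. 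Nothing in your argument excludes analogous behavior for $\{d\pi_{x}(u_{j})\}_{x\in\omega}$ acting on $H^{\infty}$, so your chain of inequalities never terminates in an $x$-independent seminorm $q_{\omega}$.

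The paper bypasses the unbounded operators $d\pi_{x}(u_{j})$ altogether by working through the Schwartz algebra $\mathcal{S}(G)$. For $f\in\mathcal{S}(G)$ and $v\in H$ the identity $d\pi_{x}(u)\,\pi_{x}(f)v=\pi_{x}(uf)v$ turns every $H^{\infty}$-seminorm of $\pi_{x}(f)v$ into a Hilbert norm $\|\pi_{x}(\varphi)v\|$ for another Schwartz function $\varphi=uf$; continuity of $x\mapsto\pi_{x}(f)v$ in $H^{\infty}$ then follows directly from strong continuity of the Hilbert family and dominated convergence, with no appeal to uniform bounds on $V$. The group action is handled by $\pi_{x}(h)\pi_{x}(f)v=\pi_{x}(L(h)f)v$, and local uniform moderate growth by Lemma~\ref{loc-umod-growth} together with the polynomial control of $L(h)$ on $\mathcal{S}(G)$. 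The point is that the $x$-dependence is absorbed into the \emph{bounded} integrated operators $\pi_{x}(\varphi)$ on $H$, where uniform estimates over compacta are already available, rather than into the unbounded $d\pi_{x}(u_{j})$ on $H^{\infty}$ where your argument stalls.
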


\begin{proof}
We note that if $f\in\mathcal{S}(G)$ and $v\in H$ then the map $x\longmapsto
\pi_{x}(f)v$ is continuous from $X$ into $H^{\infty}$. Also $\pi_{x}(h)\pi
_{x}(f)v=\pi_{x}(L(h)f)v$ with $L(h)f(g)=f(h^{-1}g)$. The last assertion
follows from Lemma \ref{loc-umod-growth}.
\end{proof}

For want of a better place to put it we include the following simple Lemma in
this section.

\begin{lemma}
\label{orthhbases}Let $(\tau,V)$ be a finite dimensional continuous
representation of $K$ and let $X$ be a locally compact metric space (resp. an
analytic manifold). If $u\in X$ let $\left\langle ...,...\right\rangle _{u}$
be an inner product on $V$ such that $\tau(k)$ acts unitarily with respect to
$\left\langle ...,...\right\rangle _{u}$ for $k\in K$ and such that the map
$u\longmapsto\left\langle v,w\right\rangle _{u}$ is continuous (resp. real
analytic) for all $v,w\in V$. Then there exists, for each $u$ and an ordered
orthonormal basis of $V,$ $e_{1}(u),...,e_{n}(u)$ such that the map
$u\longmapsto e_{i}(u)$ is continuous (resp. real analytic) and the matrix of
$\tau(k)$ with respect to $e_{1}(u),...,e_{n}(u)$ is independent of $u$.
Furthermore, if $X$ is compact and contractible and $(\sigma.W)$ is a finite
dimensional continuous representation of $K$ and $u\longmapsto B(u)\in
\mathrm{Hom}_{K}(V,W)$ is continuous and surjective for $u\in X$ then
$e_{1}(u),...,e_{r}(u)$ with $r=\dim V-\dim W$ can be taken in $\ker B(u)$.
\end{lemma}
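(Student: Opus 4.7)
The plan is to reduce the construction of the orthonormal basis to Gram--Schmidt on trivial $K$-modules (the multiplicity spaces) using the isotypic decomposition of $V$, and then handle the ``furthermore'' clause by invoking the triviality of continuous (respectively real analytic) vector bundles over a compact contractible base.

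First I would decompose $V=\bigoplus_{\gamma\in\widehat{K}}V(\gamma)$; only finitely many summands are nonzero. For each such $\gamma$ fix once and for all a model $(\tau_\gamma,V_\gamma)$, a $K$-invariant inner product $\langle\cdot,\cdot\rangle_\gamma$, and an orthonormal basis $v_1^\gamma,\dots,v_{d_\gamma}^\gamma$. The canonical evaluation map
\[
\Phi_\gamma:\mathrm{Hom}_K(V_\gamma,V)\otimes V_\gamma\longrightarrow V(\gamma),\qquad T\otimes v\mapsto T(v),
\]
is a $K$-equivariant isomorphism with $K$ acting trivially on the first factor. By Schur's lemma the inner product $\langle\cdot,\cdot\rangle_u$ restricted to $V(\gamma)$ pulls back through $\Phi_\gamma$ to a tensor product inner product, and a short computation identifies the factor on $\mathrm{Hom}_K(V_\gamma,V)$ as
\[
(S,T)_u^\gamma=\frac{1}{d_\gamma}\sum_{j=1}^{d_\gamma}\langle S(v_j^\gamma),T(v_j^\gamma)\rangle_u,
\]
which inherits continuity (respectively real analyticity) in $u$.

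Second, I would fix a $u$-independent basis $S_1,\dots,S_{m_\gamma}$ of $\mathrm{Hom}_K(V_\gamma,V)$ and run Gram--Schmidt against $(\cdot,\cdot)_u^\gamma$. Since Gram--Schmidt is a rational function of the inner products and of strictly positive square roots of norms, the resulting orthonormal basis $T_1(u),\dots,T_{m_\gamma}(u)$ depends continuously (respectively real analytically) on $u$. Setting $e_{\gamma,i,j}(u):=T_i(u)(v_j^\gamma)$ yields an orthonormal basis of $V(\gamma)$ for $\langle\cdot,\cdot\rangle_u$, and concatenating over the finitely many nonzero $V(\gamma)$ produces the global basis. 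Since $\tau(k)T_i(u)(v_j^\gamma)=T_i(u)\bigl(\tau_\gamma(k)v_j^\gamma\bigr)=\sum_l\tau_\gamma(k)_{lj}e_{\gamma,i,l}(u)$, the matrix of $\tau(k)$ is block diagonal with blocks $I_{m_\gamma}\otimes\tau_\gamma(k)$, manifestly independent of $u$.

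For the final clause, $K$-equivariance of $B(u)$ induces, for each $\gamma$, a surjection
\[
B_\gamma(u):\mathrm{Hom}_K(V_\gamma,V)\to\mathrm{Hom}_K(V_\gamma,W),\qquad T\mapsto B(u)\circ T,
\]
so $\ker B_\gamma(u)$ has constant dimension $r_\gamma$ and is a continuous (respectively real analytic) subbundle of the trivial bundle $X\times\mathrm{Hom}_K(V_\gamma,V)$. Compactness and contractibility of $X$ trivialise this subbundle, so I can pick a continuously (respectively real analytically) varying basis of $\ker B_\gamma(u)$, extend it to a basis of $\mathrm{Hom}_K(V_\gamma,V)$ by fixed complementary vectors, and apply Gram--Schmidt as above; the first $r_\gamma$ outputs remain in $\ker B_\gamma(u)$, so the corresponding $e_{\gamma,i,j}(u)$ lie in $\ker B(u)$. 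Totalling $\sum_\gamma r_\gamma d_\gamma=\dim V-\dim W=r$ and reordering places the kernel vectors first. The main obstacle is precisely the production of a globally defined continuous or real analytic frame for $\{\ker B_\gamma(u)\}_{u\in X}$; compactness plus contractibility of $X$ is exactly what is needed (in the real analytic setting this appeals to a Grauert-type triviality statement for real analytic vector bundles over a contractible base).
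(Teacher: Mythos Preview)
Your argument is correct, with one small imprecision: after trivializing $\ker B_\gamma(u)$ you write ``extend it to a basis of $\mathrm{Hom}_K(V_\gamma,V)$ by fixed complementary vectors,'' but vectors that are complementary at one point need not be complementary at all $u$. You should instead take the orthogonal complement with respect to $(\cdot,\cdot)_u^\gamma$, or trivialize the quotient bundle as well; either fix is routine.

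Your route, however, is genuinely different from the paper's. The paper fixes one $K$-invariant reference inner product $(\cdot,\cdot)$, writes $\langle v,w\rangle_u=(A(u)v,w)$ with $A(u)$ positive and $K$-commuting, and sets $e_i(u)=A(u)^{-1/2}e_i$ for a fixed orthonormal basis $e_1,\dots,e_n$; intertwining of $A(u)^{-1/2}$ with $\tau$ gives the constant matrix, and continuity/real analyticity of the square root is immediate from functional calculus on a finite-dimensional space. This avoids isotypic decomposition, Schur's lemma, and Gram--Schmidt entirely. For the second assertion the paper invokes triviality of $\ker B(u)$ as a $K$-vector bundle (equivariant triviality over a contractible compact base, citing Atiyah), pulls back the inner products to the fixed fiber $Z$ and pushes them forward to $W$, and then applies the first part to $Z$ and to $W$ separately. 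Your isotypic approach is essentially the same idea after reducing $K$-equivariant bundle triviality to ordinary bundle triviality on multiplicity spaces; it makes the block structure $I_{m_\gamma}\otimes\tau_\gamma(k)$ explicit, at the cost of more bookkeeping. The paper's square-root trick is shorter and sidesteps the need to argue separately on each isotypic piece.
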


\begin{proof}
Fix an inner product, $(...,...)$, on $V$ such that $\tau$ is unitary. Then
there exists a positive definite Hermitian operator (with respect to $\left(
...,...\right)  $), $A(u)$ such that $\left\langle v,w\right\rangle
_{u}=(A(u)v,w),v,w\in V$. Then $A(u)$ is continuous (resp. real analytic) in
$u$. Now,
\[
\left\langle v,w\right\rangle _{u}=\left\langle \tau(k)v,\tau(k)w\right\rangle
_{u}=(A(u)\tau(k)v,\tau(k)w)=(\tau(k)^{-1}A(u)\tau(k),v,w\in V,k\in K.
\]
So
\[
\tau(k)^{-1}A(u)\tau(k)=A(u),u\in X,k\in K.
\]
Set $S(u)=A(u)^{\frac{1}{2}}$ then $\left\langle v,w\right\rangle
_{u}=(S(u)v,S(u)w)$. Thus if $T(u)=S(u)^{-\frac{1}{2}}$ then $\tau
(k)T(u)=T(u)\tau(k),k\in K,u\longmapsto T(u)$ is continuous (resp. real
analytic) and
\[
\left\langle T(u)v,T(u)w\right\rangle _{u}=\left(  v,w\right)  ,v,w\in V.
\]
Let $e_{1},...,e_{n}$ be an (ordered) orthonormal basis of $V$ with respect to
$(...,...)$ then $e_{1}(u)=T(u)e_{1},...,e_{n}(u)=T(u)e_{n}$ is an orthonormal
basis of $V$ with respect to $\left\langle ...,...\right\rangle _{u}$ .If
$\tau(k)e_{i}=\sum k_{ji}e_{j}$ then%
\[
\tau(k)e_{i}(u)=\tau(k)T(u)e_{i}=T(u)\tau(k)e_{i}=\sum k_{ji}T(u)e_{j}.
\]
To prove the second assertion note that $u\rightarrow\ker B(u)$ is a
$K$--vector bundle over $X$. Since $X$ compact and contractible the bundle is
a trivial $K$--vector bundle (\cite{Atiya},Lemma 1.6.4). Thus there is a
representation $(\mu,Z)$ of $K$ and $u\longmapsto L(u)\in Hom_{K}(Z,V)$
continuous such that $L(u)Z=\ker B(u)$ and $L(u)$ is injective. Notice that
$B(u):\ker B(u)^{\perp}\rightarrow W$ is a $K$--module isomorphism. Now pull
back the inner product $\left\langle _{...,...}\right\rangle _{u}$ to $Z$
using $L(u)$ getting a $K$--invariant inner product, $(...,,,,)_{u}$, on $Z$
and push the inner product to $W$ getting \ a $K$--invariant inner product
$(...,...)_{u}^{1}$ on $W$ Now apply the first part of the lemma to get an
orthonormal basis $f_{1}(u),...,f_{r}(u)$ of $Z$ with respect to
$(...,...)_{u}$ and an orthonormal basis $f_{r+1}(u),...,f_{n}(u)$ ($n=\dim
V$) with respect to $(...,...)_{u}^{1}$ such that the matrices of the action
of $K$ with respect to each of these bases is constant. Take $e_{i}%
(u)=L(u)f_{i}(u)$ for $i=1,...,r$ and $e_{i}(u)=\left(  B(u)_{|_{\ker
B(u)^{\perp}}}\right)  ^{-1}f_{i}(u)$ for $i=r+1,...,n$.
\end{proof}

\section{Continuous globalization of families of $J$--modules}

We maintain the notation of the previous sections.

Let $Z$ be a connected analytic manifold and let $(\mu,L)$ be an analytic
family of of objects in $W(K,\mathbf{D})$ based on $Z$.

\begin{theorem}
\label{globalize-J}Let $U\subset Z$ be open with compact closure. There exists
a continuous family $(\pi,H)$ of Hilbert representations of $G$ based on $U$
such that the continuous family of $(\mathfrak{g},K)$--modules $(d\pi
,H_{K}^{\infty})$ is isomorphic with the analytic family $z\mapsto J(L_{z})$
of objects in $H\mathcal{(}\mathfrak{g},K)$ based on on $U$ (thought of as a
continuous family). Furthermore, the $K-C^{\infty}$ vectors of $(\pi_{u},H)$
are the $G-C^{\infty}$ vectors for every $u\in U.$
\end{theorem}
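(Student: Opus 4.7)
The plan is to embed each $J(L_z)$ as a $(\mathfrak{g},K)$--submodule of a single Hilbert parabolically--induced representation, and then transfer the resulting family of $z$--dependent closed subrepresentations to one model Hilbert space via Lemma \ref{orthhbases}.

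Apply Theorem \ref{Imbedding} with $\omega=\overline{U}$ to obtain an integer $k$ such that $T_k(z):J(L_z)\hookrightarrow I(\sigma_{k,z}|_M)$ is injective for every $z\in\overline{U}$. The decisive observation is that $\sigma_{k,z}|_M$ does not depend on $z$: on
\[
W_k=U(\mathfrak{n})/\mathfrak{n}^{k+1}U(\mathfrak{n})\otimes E\otimes H\otimes L_{|M}
\]
the group $M$ acts by $\mathrm{Ad}$ on $U(\mathfrak{n})$, trivially on $E\otimes H$ (since $M$ centralizes $\mathfrak{a}$ and $K$ acts trivially on $E$), and by its given action on $L$, none of which depends on $z$; only the $A$--action varies with $z$ through $\mathbf{D}$. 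Denote this common $M$--module by $\mu$, fix an $M$--invariant inner product on it, and let $\mathbb{I}$ be the $L^2$--completion of $I(\mu)$. The $G$--representations $\tilde\pi_z$ on $\mathbb{I}$ induced from $\sigma_{k,z}$ then form a continuous family of admissible Hilbert representations on the fixed Hilbert space $\mathbb{I}$: continuity on $K$--finite vectors is built into the induced--representation construction, and is promoted to the Hilbert level by the uniform bound of Lemma \ref{loc-umod-growth}. For every $z$, the $K$--$C^\infty$ and $G$--$C^\infty$ vectors of $\tilde\pi_z$ both equal the fixed subspace $I^\infty(\mu)$.

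To transport back to a fixed model, let $V=\mathcal{H}\otimes E\otimes L$ be the common underlying $K$--module of the family $J(L_z)$ provided by Theorem \ref{J-family}, equipped with a chosen $K$--invariant Hermitian inner product, and let $\mathbb{H}$ denote its Hilbert completion. For each $\gamma\in\widehat K$ the injection $T_k(z)|_{V(\gamma)}$ is $K$--equivariant between finite--dimensional $K$--modules and analytic in $z$; pulling back the $L^2$--inner product of $\mathbb{I}(\gamma)$ through it gives a $K$--invariant Hermitian form $\langle\cdot,\cdot\rangle_z^\gamma$ on $V(\gamma)$ that varies analytically in $z$. Apply Lemma \ref{orthhbases} fibrewise to produce continuous $K$--equivariant operators $S^\gamma(z)\in\mathrm{End}(V(\gamma))$ with $\langle S^\gamma(z)v,S^\gamma(z)w\rangle_z^\gamma=(v,w)$ for the fixed inner product. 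Then $U_z^\gamma:=T_k(z)\circ S^\gamma(z)$ is a $K$--equivariant isometric embedding $V(\gamma)\hookrightarrow\mathbb{I}(\gamma)$; assembling over $\gamma$ yields a $K$--equivariant unitary $U_z:\mathbb{H}\to\mathbb{H}_z\subset\mathbb{I}$, where $\mathbb{H}_z$ is the Hilbert closure of $T_k(z)V$. A density and uniform--boundedness argument using $\|U_z\|=1$ shows that $z\mapsto U_zv$ is continuous into $\mathbb{I}$ for every $v\in\mathbb{H}$.

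Define $\pi_z(g):=U_z^{-1}\tilde\pi_z(g)U_z$ on $\mathbb{H}$. Because $U_z$ is $K$--equivariant, $\pi_z|_K$ equals the fixed $K$--action on $\mathbb{H}$, so the family is admissible; strong continuity of $(z,g)\mapsto\pi_z(g)v$ follows from continuity of $z\mapsto U_zw$ together with continuity of the family $\tilde\pi_z$ and the local boundedness of Lemma \ref{loc-umod-growth}. On $K$--finite vectors the computation
\[
\pi_z(Y)v=U_z^{-1}d\tilde\pi_z(Y)U_zv=S(z)^{-1}\lambda_z(Y)S(z)v\qquad(Y\in\mathfrak{g}),
\]
with $S(z)=\bigoplus_\gamma S^\gamma(z)$, shows that $S(z)$ is a continuously $z$--varying $K$--equivariant intertwiner realizing $(d\pi,\mathbb{H}_K^\infty)\cong J(L_\bullet)$ as an isomorphism of continuous families of $(\mathfrak{g},K)$--modules. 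The equality of $K$--$C^\infty$ and $G$--$C^\infty$ vectors for $(\pi_z,\mathbb{H})$ transfers through $U_z$ to the same equality for $\tilde\pi_z$ on the closed $G$--invariant subspace $\mathbb{H}_z\subset\mathbb{I}$, which is inherited from the induced representation itself. The main obstacle is that the subspaces $\mathbb{H}_z\subset\mathbb{I}$ genuinely vary with $z$ and there is no tautological identification of them with a fixed Hilbert space; Lemma \ref{orthhbases}, applied to the analytically varying family of $K$--invariant inner products on each $V(\gamma)$, is precisely what converts the $z$--dependence of the embeddings $T_k(z)$ into a continuous $K$--equivariant unitary identification of $\mathbb{H}$ with each $\mathbb{H}_z$, making the whole plan run.
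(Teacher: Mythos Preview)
Your proof is correct and follows essentially the same approach as the paper: embed via Theorem \ref{Imbedding} into a single induced Hilbert representation (using that $\sigma_{k,z}|_M$ is independent of $z$), pull back the inner product through $T_k(z)$ on each $K$--type, invoke Lemma \ref{orthhbases} to straighten the resulting family of inner products, and conjugate the induced $G$--action back to a fixed model Hilbert space. The only cosmetic difference is that the paper takes the reference inner product to be the pulled--back one at a basepoint $u_o\in U$ (so that $H=H_{u_o}$), whereas you fix an arbitrary $K$--invariant inner product on $V$ from the start; the two choices are interchangeable via the operator $T(u)$ constructed in the proof of Lemma \ref{orthhbases}.
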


\begin{proof}
Let $\gamma\in\hat{K}$ then Theorem \ref{decompositions} 2.implies
\[
\dim J(L_{z})(\gamma)=\dim E\dim\gamma\dim\mathrm{Hom}_{K}(V_{\gamma
},\mathcal{H}\otimes L).
\]
for every $z\in Z$. In particular it is independent of $z$. Theorem
\ref{Imbedding} implies that there exists $k$ and for each $u\in U$ the map
\[
T_{k,L_{u}}:J(L_{u})\rightarrow I(\sigma_{k,L_{u}})
\]
is injective. Note that the space of $K$--finite vectors in $I(\sigma
_{k,L_{u}})$ is the $K$--finite induced representation $Ind_{M}^{K}%
(\sigma_{k,L_{|M}})$ and hence independent of $u$. Let \thinspace
$(H_{1},\left\langle ...,...\right\rangle )$ be the Hilbert space completion
of $Ind_{M}^{K}(\sigma_{k,L_{|M}})$ corresponding to unitary induction from
$M$ to $K$. This gives an analytic family of Hilbert representations of $G$,
$\mu_{z}$. For each $\gamma\in\hat{K}$ the family of linear operators
$T_{k,L_{u}|J(L_{u})(\gamma)}\in\mathrm{Hom}_{K}(J(L_{u})(\gamma
),I(\sigma_{k,L_{u}})(\gamma))$ is analytic in $u\in U$ (see Theorem
\ref{J-morph}) and injective. On $\left(  E\otimes\mathcal{H\otimes}L\right)
(\gamma)$ put for each $u\in U$ the inner product $\left\langle
...,...\right\rangle _{u}=T_{k,L_{u}}^{\ast}\left\langle ...,...\right\rangle
$. Then $\pi(k)$ acts unitarily with respect to $\left\langle
...,...\right\rangle _{u}$ for $u\in U$ and $u\longmapsto\left\langle
...,...\right\rangle _{u}$ is real analytic. Let $e_{1}^{\gamma}%
(u),...,e_{r_{\gamma}}^{\gamma}(u)$ be as in Lemma \ref{orthhbases}. Then
$u\longmapsto e_{i}^{\gamma}(u)$ is analytic. Put $f_{i}^{\gamma
}(u)=T_{k,L_{u}}e_{i}^{\gamma}(u)$ and set $P(u)_{\gamma}(v)=\sum\left\langle
v,f_{i}^{\gamma}(u)\right\rangle f_{i}^{\gamma}(u)$ then $u\longmapsto
P(u)_{\gamma}$ is an analytic map of $U$ into the manifold of orthogonal
projection operators of rank $m_{\gamma}$ on $H_{1}(\gamma)$. Set
\ref{globalize-J}%
\[
P(u)=\sum_{\gamma\in\hat{K}}P(u)_{\gamma}.
\]
Then $P(u)H_{1}$ is the closure in the Hilbert space $H_{1}$ of $T_{k,L_{u}%
}(J(L_{u}))$. Observing that the $K$--finite vectors in $(\mu_{u},H_{1})$ are
contained in the analytic vectors implies that $P(u)H_{1}$ is invariant under
$\mu_{u}(G)$. Note $u\mapsto P(u)$ is continuous in the strong operator
topology from $U$ to the bounded operators on $H_{1define}$ . This is proved
by the following standard calculus style argument. Let $v\in H_{1}$ be a unit
vector and $u_{o}\in U$. We can expand $v=\sum v_{\gamma}$ in $H_{1}$ with
$v_{\gamma}\in H(\gamma)$ and let $\varepsilon>0$ be given then there exists
$F\subset\hat{K}$ a finite set such that $\left\Vert \sum_{\gamma\notin
F}v_{\gamma}\right\Vert <\frac{\varepsilon}{4}$. Also $P(u)_{F}=\sum
_{\gamma\in F}P(u)_{\gamma}$ is analytic in $u$ thus there exists a
neighborhood $U_{1}$ of $z_{o}$ in $U$ such that $\left\Vert \left(
P(u)_{F}-P(u_{o})_{F}\right)  v\right\Vert <\frac{\varepsilon}{2}$.for $u\in
U_{1}$. Noting that $\left\Vert P(u)\right\Vert =1$ we have $\left\Vert
\left(  P(u)-P(u_{o})\right)  v\right\Vert \leq\left\Vert \left(
P(u)_{F}-P(u_{o})_{F}\right)  v\right\Vert +\left\Vert \left(  P(u)-P(u_{o}%
)\right)  \sum_{\gamma\notin F}v_{\gamma}\right\Vert <$%
\[
\left\Vert \left(  P(u)_{F}-P(u_{o})_{F}\right)  v\right\Vert +\frac
{\varepsilon}{2}<\varepsilon.
\]

For each $u\in U$ put the inner product $\left\langle ...,...\right\rangle
_{\upsilon}=T_{u,L_{o}}^{\ast}\left\langle ...,...\right\rangle $ on
$E\otimes\mathcal{H\otimes}L$. Pull back the action of $G$ on $P(u)H_{1}$ to
the Hilbert space completion, $H_{u}$ of $E\otimes\mathcal{H\otimes}L$ with
respect to $\left\langle ...,...\right\rangle _{\upsilon}$ to get the
representation $\eta_{u}$ of $G$ such that $d\eta_{u}$ is equivalent with
$J(L_{u})$. Note that $\{e_{j}(u)\}$ is an orthonormal basis of $H_{u}$ for
all $u\in Z$. If $v,u\in Z$ define $T(v,u):H_{u}\rightarrow P(v)H_{1}$ by
$T(v,u)e_{j}(u)=e_{j}(v)$. Then $T(u,v)$ is a unitary $K$--isomorphism with
inverse $T(v,u)$. Fix $u_{o}\in U$, set $H=H_{u_{o}}$ and set $\pi
_{u}(g)=T(u,u_{o})\eta_{u}(g)_{|}T(u_{o},u).$ Then $(\pi,H)$ is the desired
continuous family. The last assertion follows from the fact that the
$K-C^{\infty}$ vectors of $(\mu_{u},H_{1})$ are the $G-C^{\infty}$ vectors.
\end{proof}

The technique in the proof of the Theorem above involving the bases
$\{e_{j}^{\gamma}(u)\}$ will be used several times in the next section.

\section{Continuous globalization of families of objects in $H(\mathfrak{g}%
,K)$}

\begin{theorem}
\label{hilb-main}Let $(\pi,V)$ be an analytic family of objects in
$H(\mathfrak{g},K)$ based on the analytic manifold $X$. Let $\ x_{o}\in X$
then there exists, $U$, an open neighborhood of $x_{o}$ in $X$ and a
continuous family of Hilbert representations $(\mu_{U},H_{U})$ such that the
family $(d\mu_{U},\left(  H_{U}\right)  _{K}^{\infty})$ is isomorphic with
$(\pi_{|U},V)$(as a continuous family). Furthermore, the $K-C^{\infty}$
vectors of $\mu_{U,x}$ are the $G-C^{\infty}$ vectors.
\end{theorem}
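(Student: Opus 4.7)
The plan is to reduce to the $J$--module case handled by Theorem \ref{globalize-J}, and then to realize $V_x$ as an orthogonal complement inside $H^{J}$ using a continuous family of orthogonal projections, imitating the projection construction at the end of the proof of Theorem \ref{globalize-J}. By Theorem \ref{loc-finite} I would choose an open, relatively compact, contractible neighborhood $U$ of $x_o$ and a finite set $F\subset\hat{K}$ such that $\pi_x(U(\mathfrak{g}_{\mathbb{C}}))\sum_{\gamma\in F}V(\gamma)=V$ for all $x\in\overline{U}$. Set $R=\sum_{\gamma\in F}V(\gamma)$; since each $V(\gamma)$ is $\mathbf{D}$--stable and the $\mathbf{D}$--action varies analytically in $x$, the pair $(\mu,R)$ with $\mu_x=\pi_x|_{\mathbf{D}}$ is an analytic family in $W(K,\mathbf{D})$. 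The evaluation $J(R_x)\twoheadrightarrow V_x$, $g\otimes r\mapsto\pi_x(g)r$, is a surjective homomorphism of analytic families in $H(\mathfrak{g},K)$. Applying Theorem \ref{globalize-J} to $(\mu,R)$ on $U$ yields a continuous family of Hilbert representations $(\pi^{J},H^{J})$ whose $K$--$C^{\infty}$ and $G$--$C^{\infty}$ vectors coincide and whose $K$--finite part realizes $J(R)$.

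Let $W_x\subset(H^{J})_K$ be the kernel of the above surjection. On each $K$--isotype $H^{J}(\gamma)$ (a fixed finite dimensional $K$--module) $W_x(\gamma)$ has dimension $\dim H^{J}(\gamma)-\dim V(\gamma)$, independent of $x$, and varies continuously (in fact analytically) in $x$. The Hilbert closure $\overline{W}_x\subset H^{J}$ is $G$--invariant, and by admissibility its $K$--isotypic components coincide with the $W_x(\gamma)$. Over the compact contractible set $\overline{U}$ I apply the second assertion of Lemma \ref{orthhbases} to the continuous $K$--equivariant surjections $H^{J}(\gamma)\to V(\gamma)$ to obtain continuous orthonormal bases of $H^{J}(\gamma)$ whose first $\dim W_x(\gamma)$ vectors span $W_x(\gamma)$. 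The corresponding orthogonal projections $P(x)_{\gamma}$ sum to a $K$--equivariant orthogonal projection $P(x)$ onto $\overline{W}_x^{\perp}$, strongly continuous in $x$ by the same $\varepsilon$--argument as at the end of the proof of Theorem \ref{globalize-J}.

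Define $H_x=P(x)H^{J}$ and $\rho_x(g)v=P(x)\pi^{J}_x(g)v$ for $v\in H_x$. The decomposition $\pi^{J}_x(g_2)v=\rho_x(g_2)v+w$ with $w\in\overline{W}_x$, combined with the $G$--invariance of $\overline{W}_x$, gives $P(x)\pi^{J}_x(g_1)w=0$, so $\rho_x$ is a genuine representation of $G$ on $H_x$. The orthonormal bases provide continuous unitary $K$--isomorphisms $T(x,x_o)\colon H_{x_o}\to H_x$; setting $\mu_x(g)=T(x_o,x)\rho_x(g)T(x,x_o)$ produces the desired continuous family $(\mu_U,H_{x_o})$ on $U$. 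By construction the induced $(\mathfrak{g},K)$--module on the $K$--finite part is isomorphic to $(\pi|_U,V)$. For the coincidence of $K$--$C^{\infty}$ and $G$--$C^{\infty}$ vectors in $H_x$: since $H_x\hookrightarrow H^{J}$ is an isometric $K$--embedding (and $H_x$ is $K$--invariant), $v\in H_x$ is $K$--smooth in $H_x$ iff $v$ is $K$--smooth in $H^{J}$; by Theorem \ref{globalize-J} this is equivalent to $G$--smoothness in $H^{J}$, and post--composition with the bounded operator $P(x)$ transfers this to $G$--smoothness of $\rho_x(\cdot)v$ in $H_x$.

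The principal obstacle is the continuous construction of the family of orthogonal projections $P(x)$ onto $\overline{W}_x^{\perp}$. This is resolved by combining the constancy of the $K$--isotypic dimensions (guaranteed by the uniform finite generation of Theorem \ref{loc-finite}, applied either directly or via Corollary \ref{finitegen} to the submodule $W_x$) with the continuous trivialization of the kernel bundle supplied by the second assertion of Lemma \ref{orthhbases} over a contractible base. All remaining steps follow formally from Theorems \ref{J-family}, \ref{globalize-J}, and the projection technique already used to prove Theorem \ref{globalize-J}.
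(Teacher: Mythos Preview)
Your proposal is correct and follows essentially the same strategy as the paper: reduce to the $J$--module case via Theorem~\ref{loc-finite}, globalize by Theorem~\ref{globalize-J}, use Lemma~\ref{orthhbases} over a contractible neighborhood to trivialize the kernel bundle $K$--type by $K$--type, and transport everything to a fixed Hilbert space by the continuous unitary change-of-basis maps. The only cosmetic difference is that the paper passes to the quotient $H^{0}/H^{0}_{x}$ and then identifies it with a completion of $V$ under a pushed-forward inner product, whereas you work directly with the orthogonal complement $\overline{W}_x^{\perp}$ and the compressed action $\rho_x(g)=P(x)\pi^{J}_x(g)$; these are unitarily the same representation, so the two arguments are interchangeable.
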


\begin{proof}
Let $U_{1}$ be an open neighborhood of $x_{o}$ in $X$ with compact closure.
Then Theorem \ref{loc-finite} implies that there exists $F_{U_{1}}^{0}%
\subset\hat{K}$ a finite subset such that $\pi_{x}(U(\mathfrak{g}_{\mathbb{C}%
}))\sum_{\gamma\in F_{U}^{0}}V(\gamma)=V.$Let $R^{0}=\sum_{\gamma\in F_{U}%
}V(\gamma)$. $R^{0}$ is invariant under the action $\pi_{x}(\mathbf{D)}$ for
all $x\in X$. This implies that $(\left(  \pi_{|U}\right)  |_{\mathbf{D}%
},R^{0})$ defines an analytic family of objects in $W(K,\mathbf{D})$ based on
$U_{1}$. Let $J(R^{0})$ be the corresponding $J$--family. Then we have the
surjective analytic homomorphism of families%
\[%
\begin{array}
[c]{ccccc}
& T_{0} &  &  & \\
J(R^{0}) & \rightarrow & V_{|U} & \rightarrow & 0
\end{array}
\]
with $T_{0}(x)$ mapping $J(R_{x}^{0})$ onto $V$ for all $x\in U_{1}$. Let
$\mu_{x}^{0}$ be the action of $U(\mathfrak{g}_{\mathbb{C}})$ on the space
$J(R^{0})$ (which we regard to be the fixed $K$--representation $\mathcal{H}%
\otimes E\otimes R^{0}$) the Corollary \ref{finitegen} implies that there
exists a finite subset $F_{U}^{1}\subset\hat{K}$ such that
\[
\ker T_{0}(x)=\mu_{x}(U(\mathfrak{g}_{\mathbb{C}}))\sum_{\gamma\in F_{U}^{1}%
}\ker T_{0}(x)|_{J(R^{0})(\gamma)}.
\]
If $\gamma\in\hat{K}$ then
\[
\dim\ker T_{0}(x)|_{J(R^{0})(\gamma)}=\dim J^{0}(R)(\gamma)-\dim V(\gamma)
\]
for $x\in U_{1}.$ Let $(\sigma,(H^{0},(...,...)))$ be the continuous family of
Hilbert representations based on $U_{1}$ corresponding to $J(R^{0})$ as in
Theorem \ref{globalize-J}. Let $U$ be an open neighborhood of $x_{o}$
contained in $U$ such that $\overline{U}$ is contractible. Let $\gamma\in
\hat{K}$ if $x\in\overline{U}$ let $e_{1}^{\gamma}(x),...,e_{n_{\gamma}}%
^{y}(x)$ and orthonormal basis of $\left(  \mathcal{H}\otimes E\otimes
R^{0}\right)  (\gamma)$ with respect to the pull back of $(...,...)$ to
$\left(  \mathcal{H}\otimes E\otimes R^{0}\right)  (\gamma)$ such that
$x\mapsto e_{i}^{\gamma}(x)$ is continuous on $U$ and $e_{1}^{\gamma
}(x),...,e_{r_{\gamma}}^{\gamma}(x)$ is an orthonormal basis of $\ker
T_{1}(x)|_{\left(  \mathcal{H}\otimes E\otimes R^{0}\right)  (\gamma)}$ and
the matrix of $k\in K$ withe respect to the $e_{i}^{\gamma}(x)$ is independent
of $x$ (see the second part of Lemma \ref{orthhbases}) for $x\in U$. Define
\[
f_{i}^{\gamma}(x)=T_{0}(x)(e_{r_{\gamma}+i}^{\gamma}(x)),i=1,...,\dim
V(\gamma)\text{.}%
\]
Let if $x\in U$ let $\left\langle ...,...\right\rangle _{x}$ be the inner
product on $V$ such that $\{f_{i}^{\gamma}(x)\}_{\gamma,i}$ is an orthonormal
basis of $V$. Define $H_{x}$ to be the Hilbert space completion of $V$ with
respect to $\left\langle ...,...\right\rangle _{x}$. Let $H_{x}^{0}$ be the
closure of $\ker T_{0}(x)$ in $H$. then since $\ker T_{x}(x)$ is a
$(\mathfrak{g},K)$ invariant subspace of the analytic vectors $H_{x}$ is
$\sigma(G)$--invariant (c.f. \cite{RRG1-11} Proposition 1.6.6). The argument
using the $e_{i}^{\gamma}$ in the proof of Theorem \ref{globalize-J} one
proves that $(\sigma_{x|H_{x}^{0}},H_{x}^{0})$ defines a continuous family of
Hilbert representations based on $U$. Also the space of $K$--finite vectors of
$H^{0}/H_{x}^{0}$ is isomorphic with $(\pi_{x},V)$. Let $\mu_{x}$ be the
quotient representation on $H^{0}/H_{x}^{0}$. Note that $\,$the quotient map
on $J(R^{0})/\ker T_{0}(x)$ corresponding to $T_{0}(x),S(x),$ extends to a
unitary map of $H^{0}/H_{x}^{0}$ onto $H_{x}$ by the definition of
$\left\langle ...,...\right\rangle _{x}$. Set $\eta_{x}(g)=S(x)\mu
_{x}(g)S(x)^{-1}$ for $x\in U$. Finally, if $x,y\in U$ then define
$L(x,y):H_{y}\rightarrow H_{x}$ by
\[
L(x,y)f_{i}^{\gamma}(y)=f_{i}^{\gamma}(x)
\]
for all $i,\gamma$. Then $x,y\rightarrow L(x,y)$ is unitary, strongly
continuos and $L(x,y)^{-1}=L(y,x)$. Set $H=H_{x_{o}}$ and $\nu_{x}%
(g)=L(x,x_{o})\eta_{x}(g)L(x_{o},x$) \ then $(\nu,H)$ is the desired
continuous family of Hilbert representations based on $U$.
\end{proof}

We include the following corollary however as noted at the end of the section
it is not necessary to prove the main results that follow.

\begin{corollary}
Let $X$ be a connected analytic manifold and let $(\pi,V)$ be an analytic
family of objects in $H(\mathfrak{g},K)$ based on $X$ such that $\dim
V=\infty$ then there exists a continuous family of Hilbert representations
$(\lambda,H)$ such that the family $(d\lambda,\left(  H\right)  _{K}^{\infty
})$ is isomorphic with $(\pi,V)$(as a continuous family). Furthermore, the
$K-C^{\infty}$ vectors of $\mu_{U,x}$ are the $G-C^{\infty}$ vectors.
\end{corollary}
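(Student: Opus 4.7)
The plan is to bootstrap the local statement of Theorem \ref{hilb-main} to a global one via a gluing argument whose crucial step uses the hypothesis $\dim V = \infty$.

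First, I would cover $X$ by open sets $\{U_\alpha\}_{\alpha \in I}$ on each of which Theorem \ref{hilb-main} produces a continuous family $(\nu_\alpha, H^\alpha)$ of Hilbert representations realizing $(\pi|_{U_\alpha}, V)$, together with the equality of $K$-smooth and $G$-smooth vectors. The $K$-type multiplicities of $H^\alpha$ coincide with those of $V$, hence are independent of both $\alpha$ and $x$; since $\dim V = \infty$, each $H^\alpha$ is separable and infinite-dimensional, and all these Hilbert spaces are abstractly isomorphic as $K$-representations.

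Next, on each overlap $U_\alpha \cap U_\beta$ I would construct a continuous cocycle of $K$-equivariant unitary transitions $U_{\alpha\beta}(x) \colon H^\alpha \to H^\beta$. Existence of some $K$-equivariant topological isomorphism of $(H^\alpha)^\infty$ with $(H^\beta)^\infty$ restricting to the identity on the common $K$-finite part $V$ follows from uniqueness in the CW theorem. Continuity in $x$ is obtained by applying the orthonormal basis technique of Lemma \ref{orthhbases} to each finite-dimensional $K$-isotypic piece, and unitarization is achieved by polar decomposition on those pieces. These data assemble into a $K$-equivariant Hilbert bundle $\mathcal{H} \to X$ carrying a strongly continuous fiberwise $G$-action that extends the fixed $K$-action.

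Finally, I would trivialize $\mathcal{H}$: since $X$ is a paracompact manifold and the fibers are separable and infinite-dimensional, Kuiper's theorem supplies a global Hilbert-bundle trivialization, and the remaining freedom in the local construction---specifically the choice of orthonormal bases in Lemma \ref{orthhbases}---is used to modify the cocycle $U_{\alpha\beta}$ within its equivalence class so that the trivialization can be taken compatibly with the $K$-action. Transport through this trivialization $\Phi \colon \mathcal{H} \to X \times H$ delivers the continuous family $(\lambda, H)$ on a single Hilbert space, and the equality of $K$-smooth and $G$-smooth vectors is a local property inherited directly from Theorem \ref{hilb-main}.

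The hardest part will be the last step, the $K$-equivariant trivialization. Plain Kuiper trivializes the underlying Hilbert bundle, but the transported $K$-action need not be constant, and the group $U_K(H) = \prod_\gamma U(\dim V(\gamma))$ of $K$-equivariant unitaries is not contractible, so an equivariant Kuiper statement is not automatic. What makes the argument go through is the infinite multiplicity of $K$-types forced by $\dim V = \infty$, which allows the finite-rank isotypic subbundles to be absorbed into an infinite-dimensional trivial bundle and the cocycle to be rewritten as a coboundary; this absorption is exactly the place where the hypothesis $\dim V = \infty$ is essential.
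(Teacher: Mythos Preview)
Your overall architecture---local Hilbert families from Theorem \ref{hilb-main}, transition unitaries on overlaps, then Kuiper to trivialize the resulting Hilbert bundle---is exactly the paper's approach. The paper's proof is in fact terser than yours: it simply notes that the local constructions assemble into a Hilbert bundle with infinite-dimensional fibers and invokes Kuiper, without discussing $K$-equivariance of the trivialization at all.

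Your final paragraph, however, contains a genuine error. You correctly observe that plain Kuiper does not yield a $K$-equivariant trivialization and that the structure group of $K$-equivariant unitaries is $\prod_{\gamma}U(\dim V(\gamma))$. But your proposed repair, ``the infinite multiplicity of $K$-types forced by $\dim V=\infty$,'' is false: $V$ is admissible, so every $\dim V(\gamma)$ is \emph{finite}. The hypothesis $\dim V=\infty$ forces infinitely many nonzero isotypic components, not any infinite-dimensional one; and an infinite product of finite-dimensional unitary groups is still not contractible, so no Kuiper-style argument applies to that structure group. Your absorption claim (``finite-rank isotypic subbundles absorbed into an infinite-dimensional trivial bundle'') would require each individual isotypic subbundle to be trivial, which is a separate statement not implied by $\dim V=\infty$.

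So the situation is: you and the paper agree on the strategy; you have spotted a subtlety about $K$-equivariance that the paper's short proof does not address; but your proposed mechanism for resolving it does not work as written. Note, incidentally, that the paper itself flags this corollary as ``not necessary to prove the main results that follow,'' and Corollary \ref{main2} is derived instead from the purely local Theorem \ref{hilb-main}, sidestepping the global gluing entirely.
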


\begin{proof}
The previous theorem implies that there exists an open covering $\{U_{\alpha
}\}$ of $X$ \ such that for each $\alpha$ there exists a continuous family of
Hilbert representations $(\mu_{U_{\alpha}},H_{U_{\alpha}})$ such that
$(d\mu_{U_{\alpha}},(H_{U_{\alpha}})_{K}^{\infty})$ is continuously isomorphic
with $(\pi|_{U},V)$. For each $\alpha,\beta$ the definition of the Hilbert
spaces $H_{U_{\alpha}}$ implies that one has $g_{U_{\beta},U_{\alpha}%
}(x):H_{U_{\alpha}}\rightarrow H_{U_{\beta}}$ a unitary isomorphism depending
strongly continuously on $x\in U_{\alpha}\cap U_{\beta}$. \ This defines a
Hilbert vector bundle over $X$. Kuiper's Theorem implies that all Hilbert
bundles with infinite dimensional fibers are trivial (\cite{Boss-Bleek},
p.67). Thus there exists a fixed Hilbert space, $H$, and for each $\alpha$ and
each $x\in U_{\alpha},h_{\alpha}(x):H_{U_{\alpha}}\rightarrow H$ a unitary
isomorphism that depends strongly continuously on $x$ such that $g_{U_{\beta
},U_{\alpha}}(x)=h_{\beta}(x)^{-1}h_{\alpha}(x)$. Define $\lambda_{x}%
(g)=h_{U}(x)\mu_{x}(g)h_{U}(x)^{-1}$.
\end{proof}

This and Lemma \ref{Cont-diff} imply our main results

\begin{theorem}
\label{main}There exists a continuous family $(\lambda,Z)$ of objects in
$\mathcal{HF}(G)$ based on $X$ of local uniform moderate growth that
globalizes the family $(\pi,V)$.
\end{theorem}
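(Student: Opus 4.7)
The plan is to combine the immediately preceding corollary (which upgrades the local Hilbert globalization of Theorem \ref{hilb-main} to a global one via Kuiper's theorem on triviality of Hilbert bundles) with Lemma \ref{Cont-diff} (which passes from a continuous Hilbert family to a smooth Fr\'echet family of local uniform moderate growth). The theorem is essentially a formal consequence of these two preceding results, so the work is in assembling them correctly rather than proving anything new.

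First I would invoke the previous corollary to produce a continuous family $(\lambda, H)$ of admissible Hilbert representations of $G$ based on $X$ such that the associated continuous family of Harish--Chandra modules $(d\lambda, H_K)$ is isomorphic to $(\pi, V)$, and such that for every $x \in X$ the $K$--$C^{\infty}$ vectors of $\lambda_x$ coincide with the $G$--$C^{\infty}$ vectors. (If $X$ is disconnected or if $\dim V < \infty$, which are the cases not directly covered by the corollary as stated, one argues component by component, and in the finite dimensional case one simply takes $H = V$ with its unique Hilbert structure making $K$ unitary.)

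Next I would apply Lemma \ref{Cont-diff} directly to $(\lambda, H)$. The hypotheses are met: $(d\lambda_x, H_K^{\infty}) \cong (\pi_x, V)$ is in $H(\mathfrak{g}, K)$ for every $x$, and the $K$--$C^{\infty}$ vectors agree with the $G$--$C^{\infty}$ vectors. The lemma then yields that $(\lambda, H^{\infty})$ is a continuous family of objects in $\mathcal{HF}(G)$ based on $X$ which is of local uniform moderate growth.

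Setting $Z = H^{\infty}$, the pair $(\lambda, Z)$ is the desired family. That it globalizes $(\pi, V)$ amounts to the statement $Z_K = (H^{\infty})_K = H_K \cong V$ as continuous families of $(\mathfrak{g},K)$--modules, which is built into the construction. There is no real obstacle here; the hard work is already contained in Theorem \ref{globalize-J}, Theorem \ref{hilb-main}, the use of Kuiper's theorem in the preceding corollary, and the uniform bound of Lemma \ref{loc-umod-growth} that underlies Lemma \ref{Cont-diff}. The only subtlety worth double-checking in writing the proof is that the isomorphism $(d\lambda, H_K) \cong (\pi, V)$ provided by the corollary is an isomorphism of continuous families, so the induced isomorphism $Z_K \cong V$ is continuous in the parameter, giving the globalization in the sense required.
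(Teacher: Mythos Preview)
Your proposal is correct and follows exactly the route the paper indicates: the sentence ``This and Lemma \ref{Cont-diff} imply our main results'' preceding Theorem \ref{main} is the entire proof in the paper, and you have spelled out precisely that deduction (corollary via Kuiper for the global Hilbert family, then Lemma \ref{Cont-diff} to pass to $H^{\infty}$ with local uniform moderate growth). Your added remarks about the finite-dimensional and disconnected cases are reasonable housekeeping not addressed in the paper.
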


This can be interpreted in the following way:

\begin{corollary}
\label{main2}Let $T$ be the inverse functor to the $K$--finite functor
$\mathcal{HF}(G)\rightarrow H(\mathfrak{g},K)$ and let $(\pi,V)$ is an
analytic family of objects in $H(\mathfrak{g},K)$ based on the connected
analytic manifold $X$ such that $\dim V=\infty$. If $T((\pi_{x},V))=(\lambda
_{x},\overline{V_{x}})$ then

1. For all $x,y\in X,\overline{V_{x}}=\overline{V_{y}}$ as subspaces of
$\prod_{\gamma\in\hat{K}}V(\gamma)$. Set $\overline{V}$ equal to the common value.

2. The map $x,g,v\longmapsto\lambda_{x}(g)v$ is continuous from $X\times
G\times\overline{V}$ to $\bar{V}$, linear in $v$ and $C^{\infty}$ in $g$.
\end{corollary}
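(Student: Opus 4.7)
The plan is to read off both claims from Theorem \ref{main} using the uniqueness built into the CW theorem for part (1) and the local uniform moderate growth bound for part (2).

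First I apply Theorem \ref{main} to the given analytic family $(\pi, V)$ to obtain a continuous family $(\lambda, Z)$ of objects in $\mathcal{HF}(G)$ based on $X$, of local uniform moderate growth, globalizing $(\pi, V)$. Thus for every $x \in X$ the pair $(\lambda_x, Z)$ is a smooth Fréchet representation of moderate growth in the Harish--Chandra class whose underlying $(\mathfrak{g},K)$-module of $K$-finite vectors is $(\pi_x, V)$.

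For part (1), I invoke the uniqueness statement recalled in the paper right after the formulation of the CW theorem: if $W_1, W_2 \in \mathcal{HF}(G)$ both satisfy $(W_j)_K = V$, then each embeds into $\prod_{\gamma \in \hat K} V(\gamma)$ as the set of formal $K$-isotypic sums that converge under its Fréchet seminorms, and the identity on $V$ forces $W_1 = W_2$ as subspaces of $\prod_{\gamma} V(\gamma)$. Applying this with $W_1 = Z$ and $W_2 = \overline{V_x} := T(\pi_x, V)$ for each $x$ gives $\overline{V_x} = Z$ inside $\prod_\gamma V(\gamma)$, independently of $x$. Setting $\overline{V} = Z$ establishes part (1).

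For part (2), under the identification $\overline{V_x} = \overline{V}$ the map $(x, g, v) \mapsto \lambda_x(g) v$ is precisely the continuous-family structure furnished by Theorem \ref{main}. By definition $(x,g) \mapsto \lambda_x(g) v$ is continuous in the strong topology for each $v$, and $g \mapsto \lambda_x(g) v$ is $C^\infty$ since each $(\lambda_x, \overline{V}) \in \mathcal{HF}(G)$. To promote separate to joint continuity in $(x,g,v)$, fix $(x_0, g_0, v_0) \in X \times G \times \overline{V}$ and a continuous seminorm $p$ on $\overline{V}$; choose a relatively compact neighborhood $\omega \ni x_0$ and a compact neighborhood $\Omega \ni g_0$. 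Local uniform moderate growth supplies a continuous seminorm $q$ and $r \geq 0$ with $p(\lambda_x(g) w) \leq q(w) \|g\|^r$ for all $(x,g,w) \in \omega \times \Omega \times \overline{V}$. Setting $M = \sup_{g \in \Omega} \|g\|^r$, the triangle inequality $p(\lambda_x(g) v - \lambda_{x_0}(g_0) v_0) \leq q(v - v_0) M + p((\lambda_x(g) - \lambda_{x_0}(g_0)) v_0)$ shows the left side tends to $0$ as $(x,g,v) \to (x_0, g_0, v_0)$, the second term vanishing by strong continuity and the first by continuity of $q$. The only conceptual step beyond Theorem \ref{main} is the uniform identification of $Z$ with each abstract CW globalization $\overline{V_x}$ as a subspace of $\prod_\gamma V(\gamma)$; everything else is a standard equicontinuity argument, so I do not anticipate a real obstacle.
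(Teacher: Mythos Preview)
Your proof is correct and follows the same line as the paper: obtain a concrete Fr\'echet globalization from the Hilbert family, then invoke the CW uniqueness argument (identifying each $\overline{V_x}$ with the fixed space $Z$ inside $\prod_\gamma V(\gamma)$) for part~(1), and read off continuity from the family structure and local uniform moderate growth for part~(2). The only difference is that the paper remarks one can get away with the \emph{local} Hilbert globalization (Theorem~\ref{hilb-main}) rather than the global Theorem~\ref{main}, so that Kuiper's theorem is not actually needed---connectedness of $X$ then patches the local identifications---whereas you invoke Theorem~\ref{main} directly; this is harmless but slightly less economical.
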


With this interpretation it is clear that this result follows from the local
version of the Hilbert globalization (that is the first theorem in this section).

\section{The dual functor}

We now consider a dual functor. Let $(\pi,V)\in\mathcal{HF}(G)$ let
$\lambda\in V^{\prime}$ (the continuous dual). If $v\in V_{K}$ then the
following assertions are true

1. There exists $f_{\lambda,v}$ a real analytic function on $G$ such that
$f_{\lambda,v}(1)=\lambda(v)$.

2. If $R_{g}$ denotes the right regular action of $g\in G$ on $C(G)$ then
$R_{k}f_{\lambda,v}=f_{\lambda,kv}$ for $k\in K$. If $x\in U(\mathfrak{g})$ is
thought of as a left invariant differential operator then $xf_{\lambda
,v}=f_{\lambda,xv}$.

3. There exists $d$ depending only on $\lambda$ and $C_{v}>0$ such that
$\left\vert f_{\lambda,v}(g)\right\vert \leq C_{v}\left\Vert g\right\Vert ^{d}
$

We note that conditions 1. and 2 uniquely specify $f_{\lambda,v}=\lambda
(\pi(g)v)$ which satisfies 3.

If $Z$ is an object of $H(\mathfrak{g},K)$ then denote by
$Z_{\operatorname{mod}}^{\ast}$ set of $\lambda$ in the algebraic dual,
$Z^{\ast}$, \ of $Z$ such that if $v\in Z$ then there exists a real analytic
function $f_{\lambda,v}$ satisfying 1.,2. and 3.

A variant of the CW theorem proved in \cite{RRG1-11}Theorem 11.6.6, Corollary
11.6.3 is

\begin{theorem}
\label{CW-dual}Let $Z\in H(\mathfrak{g},K)$ then if $V\in\mathcal{HF}(G)$ and
if $V_{K}=Z$ then $V_{|Z}^{\prime}=Z_{\operatorname{mod}}^{\ast}.$
\end{theorem}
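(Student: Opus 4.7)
I prove $V'|_Z = Z^{\ast}_{\mathrm{mod}}$ as two inclusions. For the forward inclusion, given $\lambda \in V'$ and $v \in Z$, the matrix coefficient $f_{\lambda,v}(g) := \lambda(\pi(g)v)$ satisfies condition 3 by the moderate growth of $\pi$ combined with continuity of $\lambda$, and satisfies 1 and 2 formally. For real analyticity I use that $Z \in H(\mathfrak{g},K)$ is admissible and finitely generated, so every $K$-finite $v \in Z$ is also $Z(\mathfrak{g})$-finite, hence an analytic vector of $\pi$; thus $g \mapsto \pi(g)v$ is real analytic and so is $f_{\lambda,v}$.

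For the reverse inclusion, given $\mu \in Z^{\ast}_{\mathrm{mod}}$ I build $\lambda \in V'$ with $\lambda|_Z = \mu$. Choose generators $w_1, \ldots, w_k$ of the finitely generated $(\mathfrak{g}, K)$-module $Z$. By the quoted corollary that $\pi(\mathcal{S}(G)) U = V$ when $U$ generates $V_K$, the evaluation map
\[
\Phi : \mathcal{S}(G)^k \to V, \qquad \Phi(f_1, \ldots, f_k) = \sum_i \pi(f_i) w_i,
\]
is a continuous Fr\'{e}chet surjection, hence open, and so $V \cong \mathcal{S}(G)^k / \ker \Phi$ topologically. Define
\[
L(f_1,\ldots,f_k) := \sum_i \int_G f_i(g)\, f_{\mu, w_i}(g)\, dg;
\]
absolute convergence and continuity on $\mathcal{S}(G)^k$ follow from rapid decay of $\mathcal{S}(G)$ combined with condition 3 on each $f_{\mu, w_i}$. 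If $L|_{\ker \Phi} = 0$, then $L$ descends to a continuous $\lambda \in V'$, and an approximate-identity argument $\pi(f^{\varepsilon}) w_i \to w_i$ yields $\lambda(w_i) = f_{\mu, w_i}(1) = \mu(w_i)$, so $\lambda|_Z = \mu$ by $(\mathfrak{g}, K)$-equivariance on both sides.

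The main obstacle is showing $L|_{\ker \Phi} = 0$. My strategy is a $K$-type decomposition of $\mu$: for each $\gamma \in \hat{K}$, let $\mu_\gamma \in Z^{\ast}_{\mathrm{mod}}$ be $\mu$ on the isotypic component $Z(\gamma)$ and zero on the others (one checks conditions 1--3 for $\mu_\gamma$ from those for $\mu$ and the $K$-invariance of $Z(\gamma)$). Each $\mu_\gamma$ is $K$-finite on $Z$, hence is the restriction to $Z$ of some $\lambda_\gamma \in V'$ by the standard $K$-finite Casselman-Wallach duality $V'_K \cong Z^{\ast}_K$. For these genuine extensions, Fubini gives
\[
L_\gamma(f_1,\ldots,f_k) := \sum_i \int_G f_i(g) f_{\mu_\gamma, w_i}(g)\, dg = \lambda_\gamma(\Phi(f_1,\ldots,f_k)),
\]
so each $L_\gamma$ vanishes on $\ker \Phi$. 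The remaining task is to identify $L = \sum_\gamma L_\gamma$ on $\mathcal{S}(G)^k$, equivalently the analytic identity $f_{\mu, w_i} = \sum_\gamma f_{\mu_\gamma, w_i}$ with permissible interchange of sum and integration. The identity itself follows from the uniqueness statement for matrix coefficients (conditions 1 and 2 determine them) via matching Taylor series at the identity, since only finitely many $\gamma$ appear in $X w_i$ for each $X \in U(\mathfrak{g})$; the interchange with integration is where the uniform moderate-growth bound of condition 3 is essential and where the hypothesis $\mu \in Z^{\ast}_{\mathrm{mod}}$ (as opposed to an arbitrary linear functional on $Z$) genuinely bites.
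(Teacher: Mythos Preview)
The paper does not actually prove Theorem~\ref{CW-dual}: it is quoted as a known variant of the Casselman--Wallach theorem, with a citation to \cite{RRG1-11}, Theorem~11.6.6 and Corollary~11.6.3. The argument the paper itself carries out is the parametrized version, Theorem~\ref{holextension}; the method there (an auxiliary weighted Hilbert norm $\langle\cdot,\cdot\rangle_{\nu,s}$ built from matrix coefficients against a fixed generating set, together with Schur orthogonality for the left $K$--action to prove $\sum_\gamma\|\tau_\gamma(\lambda)\|_{\nu,s}^2<\infty$) is essentially the RRG proof specialized and made uniform in the parameter.

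Your forward inclusion is fine. Your skeleton for the reverse inclusion (the surjection $\Phi:\mathcal{S}(G)^k\to V$, the pairing $L$, and the $K$--type decomposition $\mu=\sum_\gamma\mu_\gamma$ with each $\mu_\gamma$ extending to a genuine $\lambda_\gamma\in V'$) is the right shape, and it matches the strategy in RRG. But the step you flag as ``the remaining task'' is a genuine gap, and your proposed justification does not close it. You argue that $f_{\mu,w_i}=\sum_\gamma f_{\mu_\gamma,w_i}$ ``follows from uniqueness via matching Taylor series at the identity''. Uniqueness applies to a single real analytic function satisfying 1.~and 2.; to invoke it for the infinite sum you would first have to know that $\sum_\gamma f_{\mu_\gamma,w_i}$ converges to a real analytic function on $G$, and that is exactly what is at stake. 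Matching all derivatives at $e$ (which is indeed automatic because each $Xw_i$ has only finitely many $K$--types) tells you nothing about convergence of the series away from $e$. Likewise, condition~3 on $\mu$ gives a single bound $|f_{\mu,w_i}(g)|\le C_{w_i}\|g\|^d$, and the induced bound on each component is $|f_{\mu_\gamma,w_i}(g)|\le d(\gamma)\,C_{w_i}\|g\|^d$; this blows up with $d(\gamma)$ and does not by itself justify interchanging $\sum_\gamma$ with $\int_G$.

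What is missing is the orthogonality mechanism. Observe that $f_{\mu_\gamma,w_i}(g)=d(\gamma)\int_K\overline{\chi_\gamma(k)}\,f_{\mu,w_i}(kg)\,dk$ is precisely the $\gamma$--isotypic projection of $f_{\mu,w_i}$ under the \emph{left} regular $K$--action. Since $\|g\|$ is $K$--bi-invariant, Peter--Weyl gives, for $s$ large enough that $\int_G|f_{\mu,w_i}(g)|^2\|g\|^{-2s-d_0}\,dg<\infty$, the Parseval identity
\[
\int_G|f_{\mu,w_i}(g)|^2\|g\|^{-2s-d_0}\,dg \;=\;\sum_{\gamma}\int_G|f_{\mu_\gamma,w_i}(g)|^{2}\|g\|^{-2s-d_0}\,dg,
\]
so the partial sums of $\sum_\gamma f_{\mu_\gamma,w_i}$ converge to $f_{\mu,w_i}$ in the weighted $L^2$. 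Any $f\in\mathcal{S}(G)$ lies in the dual weighted $L^2$, and that is what legitimizes $L=\sum_\gamma L_\gamma$ and hence $L|_{\ker\Phi}=0$. This is exactly the device the paper uses (for the parametrized statement) in the proof of Theorem~\ref{holextension}; without it your argument is incomplete.
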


The purpose of this section is to prove a version of this theorem depending on
parameters for parabolic induced representations. Before we state our result
we will need some definitions and a lemma that will be critical to the proof
of the main result of the section.

Let $P=MAN$ be a real parabolic subgroup with given $K$ standard Langlands
decomposition. Set $M_{K}=M\cap K$. Let $(\sigma,H_{\sigma})$ be an
admissible, finitely generated Hilbert representation of $M$ with Hilbertspace
structure $\left\langle ...,...\right\rangle _{\sigma}$. Let $A=A_{1}A_{2}$
set $M_{1}=MA_{1}$ and let $\mu$ be a unitary character of $A_{1}.$ Then
$\sigma_{1}(ma)=\mu(a)\sigma(m)$ defines a unitary representation of $M_{1}$
on $H_{\sigma}$. Let $I_{\sigma}^{\infty}$ be the $C^{\infty}$ induced
representation of $\left(  \sigma_{|K_{M}},H_{\sigma}^{\infty}\right)  $
\ from $K_{M}$ to $K$. Let $\mathfrak{a}_{2}=Lie(A_{2})$ and let $\nu
\in\left(  \mathfrak{a}_{2}\right)  _{\mathbb{C}}^{\ast}$. If $f\in I_{\sigma
}^{\infty}$ then set%
\[
f_{\nu}(na_{2}m_{1}k)=a_{2}^{\rho}\sigma_{1}(m)f(k),a_{2}\in A_{2},n\in N,m\in
M_{1},k\in K.
\]
since the ambiguity in this expression is in $M_{K}$ this defines a smooth
function from $G$ to $H_{\sigma}$. As usual, define%
\[
\left(  \pi_{\nu}(g)f\right)  (k)=f_{\nu}(kg)\text{.}%
\]
Then $(\pi_{\nu},I_{\sigma}^{\infty})$ defines a family of smooth Fr\'{e}chet
representations of moderate growth. Set $I_{\sigma}=\left(  I_{\sigma}%
^{\infty}\right)  _{K}.$

If $f_{1},f_{2}\in I_{\sigma}^{\infty}$ define
\[
\left\langle f_{1},f_{2}\right\rangle =\int_{K}\left\langle f_{1}%
(k),f_{2}(k)\right\rangle _{\sigma}dk.
\]
Let $\left\Vert ...\right\Vert $ be the corresponding norm on $I_{\sigma
}^{\infty}$ \ and let $H$ be the Hilbert space completion of $I_{\sigma
}^{\infty}$. In ??? se showed that if $X$ is a compact subset of $\left(
\mathfrak{a}_{2}\right)  _{\mathbb{C}}^{\ast}$ Then there exist $C_{X}$ and
$r_{X}$ such that$\left\Vert \pi_{x}(g)\right\Vert \leq C_{X}\left\Vert
g\right\Vert ^{r_{X}}.$

Let $d_{o}$ be such that%
\[
\int_{G}\left\Vert g\right\Vert ^{-d_{o}}dg<\infty.
\]

Fix $U$ open in $\left(  \mathfrak{a}_{2}\right)  _{\mathbb{C}}^{\ast}$ with
compact closure and set $r_{U}=r_{\bar{U}}$.

Let $W$ be a finite dimenional, $K$ and $Z(\mathfrak{g}$\thinspace$)$
invariant subspace such that
\[
d\pi_{\nu}(U(\mathfrak{g}))W=I_{\sigma}%
\]
for all $\nu\in\bar{U}$. Let $f_{1},...,f_{d}$ be an orthonormal basis of $W$
with resepect to $\left\langle ...,...\right\rangle $ (Theorem
\ref{loc-finite}).

If $\nu\in\bar{U}$ define a new inner product on $H$ by
\[
\left\langle u,w\right\rangle _{\nu,s}=\sum_{i}\int_{G}\left\langle \pi_{\nu
}(g)f_{i},w\right\rangle \left\langle u,\pi_{\nu}(g)f_{i}\right\rangle
\left\Vert g\right\Vert ^{-2s-d_{o}}dg.
\]

\begin{lemma}
\label{keylemma}Fix $\nu_{o}$ in $\bar{U}$ then there exists $\xi>0$ such that
if $\nu\in\bar{U}$ and $s>r_{U}+\xi$ then there exists $L>0$ such that if
$u\in I_{\sigma}^{\infty}$%
\[
\left\Vert u\right\Vert _{\nu_{o},s}\leq L\left\Vert u\right\Vert _{\nu,s-\xi
}.
\]

\end{lemma}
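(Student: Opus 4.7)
The key is the pointwise identity
\[
(\pi_{\nu_o}(g) f)(k) \;=\; a_2(kg)^{\nu_o - \nu}\,(\pi_\nu(g) f)(k), \qquad f \in I_\sigma^\infty,\ k \in K,\ g \in G,
\]
which follows immediately from the defining formulas for $f_\nu$ and $f_{\nu_o}$ in the $G = N M_1 A_2 K$ decomposition. Since $\bar U$ is compact, the shift $\nu_o - \nu$ stays bounded in $(\mathfrak{a}_2)^\ast_{\mathbb{C}}$, and combined with the standard inequality $\|a_2(kg)\|^{\pm 1} \le \|g\|$ coming from properties 1--5 of the norm, this gives a uniform polynomial bound
\[
|a_2(kg)^{\nu_o - \nu}| \;\le\; \|g\|^\xi\qquad\text{for all } \nu\in\bar U,\ k\in K,\ g\in G,
\]
with $\xi = \xi_U > 0$ depending only on the diameter of $\bar U$. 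This is the $\xi$ appearing in the statement.

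Translating the pointwise identity into an operator statement, the multiplication operator $M_g u(k) := a_2(kg)^{\nu_o-\nu}u(k)$ preserves $I_\sigma^\infty$ (because $M_K$ normalizes the $N$- and $A_2$-components of the Iwasawa decomposition of $kg$), and one gets
\[
\langle \pi_{\nu_o}(g) f_i, u\rangle \;=\; \langle \pi_\nu(g) f_i, M_g^\ast u\rangle.
\]
Cauchy--Schwarz on $K$ combined with the uniform bound on $M_g$ yields
\[
|\langle \pi_{\nu_o}(g) f_i, u\rangle|^2 \;\le\; \|g\|^{2\xi}\,\int_K \|(\pi_\nu(g) f_i)(k)\|_\sigma^{2}\,\|u(k)\|_\sigma^2\, dk.
\]
Summing over $i$ and integrating against $\|g\|^{-2s-d_o}\,dg$ absorbs $\|g\|^{2\xi}$ into the weight, producing $\|g\|^{-2(s-\xi)-d_o}$; the hypothesis $s > r_U + \xi$ together with the locally uniform moderate-growth bound $\|\pi_\nu(g)\| \le C_U \|g\|^{r_U}$ provides the convergence of the $G$-integral. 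If the $L^2(K)$-factor on the right can be replaced by a quantity controlled by $\|u\|_{\nu,s-\xi}^2$, the result follows.

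The main obstacle is exactly this last passage: the naive Cauchy--Schwarz bound only controls the right-hand side by the ambient Hilbert norm $\|u\|^2$, whereas $\|u\|_{\nu,s-\xi}^2 \le C\|u\|^2$ is the \emph{weaker} (smaller) norm, so some sharpening is required. The way around this is to decompose $u$ according to $K$-types and use the uniform cyclic generation of $I_\sigma$ by $W$ for all $\nu \in \bar U$ (Theorem \ref{loc-finite}, or Corollary \ref{finitegen}): for each $\gamma \in \hat K$, the $\gamma$-component of $u$ is reconstructed from the matrix coefficients $\langle \pi_\nu(g) f_j, u\rangle$ against the finite family $\{f_j\} \subset W$, with a loss at most polynomial in $\|g\|$ that is absorbed by the factor $\|g\|^\xi$ produced in the estimate above. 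Carrying out this $K$-type-by-$K$-type comparison and bookkeeping the polynomial factors yields the desired $\|u\|_{\nu_o,s} \le L \|u\|_{\nu,s-\xi}$, with the constant $L$ depending on $\nu, s, U$ but not on $u$.
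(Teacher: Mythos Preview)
Your opening is exactly the paper's: the pointwise identity $(f_i)_{\nu_o}(kg)=a(kg)^{\nu_o-\nu}(f_i)_\nu(kg)$ together with the compactness of $\bar U$ giving a uniform bound of the form $\int_K a(kg)^{2\operatorname{Re}(\nu_o-\nu)}dk\le L\|g\|^{2\xi}$. The divergence is in what you do next.

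The paper does \emph{not} pass through the ambient norm $\|u\|$ at all. It applies Cauchy--Schwarz in the $k$-variable so as to compare the two matrix coefficients directly:
\[
\bigl|\langle \pi_{\nu_o}(g)f_i,u\rangle\bigr|^2
\;\le\;\Bigl(\int_K a(kg)^{2\operatorname{Re}(\nu-\nu_o)}\,dk\Bigr)\,
\bigl|\langle \pi_\nu(g)f_i,u\rangle\bigr|^2 .
\]
With this pointwise-in-$g$ comparison in hand, you simply sum over $i$ and integrate against $\|g\|^{-2s-d_o}dg$; the factor $L\|g\|^{2\xi}$ shifts $s$ to $s-\xi$ and you read off $\|u\|_{\nu_o,s}^2\le L\|u\|_{\nu,s-\xi}^2$ straight from the definition of the norms. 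No $K$-type decomposition, no appeal to cyclic generation, no reconstruction of $u$ is needed.

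Your version of Cauchy--Schwarz throws away exactly the structure that makes this work: by bounding with $\int_K\|(\pi_\nu(g)f_i)(k)\|_\sigma^2\|u(k)\|_\sigma^2\,dk$ you lose the pairing $\langle \pi_\nu(g)f_i,u\rangle$ that defines $\|u\|_{\nu,s-\xi}$, and you are then forced to invent a recovery mechanism. The ``$K$-type-by-$K$-type'' fix you sketch is vague --- there is no obvious way to reconstruct $\|u\|_{\nu_o,s}$ from the data $\langle \pi_\nu(g)f_j,u\rangle$ with only a polynomial loss in $\|g\|$ uniform over $\hat K$ --- and in any case it is not the argument the paper has in mind. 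The missing idea is simply to keep the integrand in the form (weight)$\times\langle(\pi_\nu(g)f_i)(k),u(k)\rangle_\sigma$ and estimate so that the quantity $\langle\pi_\nu(g)f_i,u\rangle$ survives intact on the right-hand side.
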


\begin{proof}
First note that
\[
\left\Vert u\right\Vert _{\nu_{o},s}^{2}=\sum_{i}\int_{G}\left\vert
\left\langle \pi_{\nu}(g)f_{i},u\right\rangle \right\vert ^{2}\left\Vert
g\right\Vert ^{-2s-d_{o}}dg.
\]
Now%
\[
\left\langle \pi_{\nu_{o}}(g)f_{i},u\right\rangle =\int_{K}\left\langle
\left(  f_{i}\right)  _{\nu_{o}}(kg),u(k)\right\rangle _{\sigma}dk.
\]%
\[
=\int_{K}a(kg)^{\nu-\nu_{o}}\left\langle \left(  f_{i}\right)  _{\nu
}(kg),u(k)\right\rangle _{\sigma}dk.
\]
Thus%
\[
\left\vert \left\langle \pi_{\nu_{o}}(g)f_{i},u\right\rangle \right\vert
^{2}\leq\left(  \int_{K}a(kg)^{2\operatorname{Re}(\nu-\nu_{o})}dk\right)
\left\vert \left\langle \pi_{\nu}(g)f_{i},u\right\rangle \right\vert ^{2}.
\]
Since $\bar{U}$ is compact there exists a constant $\xi$ and $L>0$ such that
\[
\int_{K}a(kg)^{2\operatorname{Re}(\nu-\nu_{o})}dk\leq L\left\Vert g\right\Vert
^{2\xi},\nu\in\bar{U}.
\]
Thus%
\[
\left\Vert u\right\Vert _{\nu_{o},s}^{2}\leq L\sum_{i}\int_{G}\left\vert
\left\langle \pi_{\nu}(g)f_{i},u\right\rangle \right\vert ^{2}\left\Vert
g\right\Vert ^{-2s+2\xi-d_{o}}dg=L\left\Vert u\right\Vert _{\nu,s-\xi}^{2}.
\]

\end{proof}

If $(\mu,Z)$ is a holomorphic family of objects in $H(\mathfrak{g},K)$ based
on the complex manifold $X$ then a correspondence $x\longmapsto\lambda_{x}\in
Z^{\ast}$ will be called Holomorphic if $x\mapsto\lambda_{x}(v)$ is
holomorphic for all $v\in Z$. A holomorphic correspondence $x\mapsto
\lambda_{x}$ with $\lambda_{x}\in\left(  Z_{x}\right)  _{\operatorname{mod}%
}^{\ast}$ is said to be of local uniform moderate growth if for each compact
subset $\omega\subset X$ there exists $d_{\omega}$ such that if $x\in\omega$
and $v\in Z$ then%
\[
\left\vert f_{\lambda_{x},v}(g)\right\vert \leq C_{v}\left\Vert g\right\Vert
^{d_{\omega}}%
\]
for $v\in Z,x\in X,g\in G$.

\begin{theorem}
\label{holextension}Keep the notation above. Let for $\nu\rightarrow
\lambda_{\nu}\in\left(  I_{\sigma}\right)  _{\operatorname{mod}}^{\ast}$ be
holomorphic on an open subset $W\subset$ $\left(  \mathfrak{a}_{2}\right)
_{\mathbb{C}}^{\ast}$ \ and of local uniform moderate growth. Then for each
$\nu$, $\lambda_{\nu}$ extends to an element of $\left(  I_{\sigma}^{\infty
}\right)  ^{\prime}$ and the map%
\[
\left(  \mathfrak{a}_{2}\right)  _{\mathbb{C}}^{\ast}\rightarrow\left(
I_{\sigma}^{\infty}\right)  ^{\prime}%
\]
given by $\nu$ maps to the continuous extension of $\lambda_{\nu}$ is weakly holomorphic.
\end{theorem}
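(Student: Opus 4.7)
The first assertion is immediate from Theorem \ref{CW-dual}: the hypothesis places $\lambda_\nu$ in $(I_\sigma)^{\ast}_{\mathrm{mod}}$ for each $\nu\in W$, and so it extends uniquely to a continuous functional $\tilde\lambda_\nu \in (I_\sigma^\infty)'$. The substance of the theorem is the weak holomorphy of $\nu \mapsto \tilde\lambda_\nu$.

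Fix $\nu_0\in W$ and a relatively compact open neighborhood $U$ of $\nu_0$ in $W$; set $\omega=\bar U$ and let $d_\omega$ be the uniform moderate-growth exponent supplied by the hypothesis. Apply Theorem \ref{loc-finite} to the analytic family $(\pi_\nu, I_\sigma)$ to obtain a finite $F\subset \hat K$ such that $\pi_\nu(U(\mathfrak{g}))\sum_{\gamma\in F}I_\sigma(\gamma)=I_\sigma$ for every $\nu\in\omega$, and pick $f_1,\dots,f_d$ an orthonormal basis of $\sum_{\gamma\in F}I_\sigma(\gamma)$ with respect to $\langle\cdot,\cdot\rangle$. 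These are precisely the data entering the seminorms $\|\cdot\|_{\nu,s}$ of Lemma \ref{keylemma}. The plan is to combine a uniform continuity estimate for the family $\{\tilde\lambda_\nu\}_{\nu\in\omega}$ with density of $K$-finite vectors in $I_\sigma^\infty$ and the fact that a uniform limit of holomorphic functions is holomorphic.

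The crux is to produce $s_0$ and a constant $C$ such that
\[
|\tilde\lambda_\nu(u)| \;\le\; C\,\|u\|_{\nu_0, s_0}\qquad\text{for every } \nu\in\omega,\ u\in I_\sigma^\infty.
\]
To prove this I would first adapt the argument behind Theorem \ref{CW-dual} to obtain, uniformly for $\nu\in\omega$, an estimate of the form
\[
|\tilde\lambda_\nu(u)| \;\le\; C_1\,\|u\|_{\nu, s_1}
\]
for some $s_1$ large relative to $r_U + d_\omega$. The key point here is that $\tilde\lambda_\nu$ is represented via Cauchy--Schwarz pairing against a vector $h_\nu$ in the Hilbert completion of $I_\sigma$ under $\|\cdot\|_{\nu, s_1}$, and the local uniform moderate-growth hypothesis delivers a uniform bound on $\|h_\nu\|_{\nu, s_1}$ as $\nu$ varies over $\omega$. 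Lemma \ref{keylemma}, applied with the roles of $\nu_0$ and $\nu$ interchanged (the argument given there is manifestly symmetric in the two points of $\omega$), then yields $\|u\|_{\nu, s_1}\le L\,\|u\|_{\nu_0, s_1+\xi}$; taking $s_0=s_1+\xi$ and $C=C_1 L$ completes the uniform estimate.

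Granted this estimate, the proof concludes by a routine density-plus-uniform-convergence argument. Choose $v_n\in I_\sigma$ with $\|u-v_n\|_{\nu_0,s_0}\to 0$, using that $I_\sigma$ is dense in $I_\sigma^\infty$ and that $\|\cdot\|_{\nu_0,s_0}$ is a continuous seminorm on $I_\sigma^\infty$. By hypothesis each $\nu\mapsto\lambda_\nu(v_n)$ is holomorphic on $W$, and the uniform estimate applied to $u-v_n$ gives
\[
\sup_{\nu\in\omega}\,|\tilde\lambda_\nu(u)-\lambda_\nu(v_n)| \;\le\; C\,\|u-v_n\|_{\nu_0,s_0}\;\longrightarrow\;0,
\]
so $\nu\mapsto\tilde\lambda_\nu(u)$ is a uniform limit of holomorphic functions on $\omega$, hence holomorphic on $U$; since $\nu_0$ was arbitrary, on all of $W$. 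The real obstacle is producing the uniform Cauchy--Schwarz-type estimate in terms of $\|\cdot\|_{\nu,s_1}$: Lemma \ref{keylemma} is the mechanism that converts the pointwise-in-$\nu$ continuity of $\tilde\lambda_\nu$ furnished by Theorem \ref{CW-dual} into a bound by a single Fréchet seminorm valid uniformly across $\omega$.
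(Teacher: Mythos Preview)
Your overall architecture is right and close to the paper's: set up the square-integrable seminorms $\|\cdot\|_{\nu,s}$, represent $\lambda_\nu$ by a formal vector $h_\nu=\sum_\gamma\tau_\gamma(\lambda_\nu)$, use the local uniform moderate-growth hypothesis to bound $\|h_\nu\|_{\nu,s}$ uniformly over $\omega$, and then invoke Lemma~\ref{keylemma} to transfer norms between different values of $\nu$. The final step you propose---approximating $u\in I_\sigma^\infty$ by $K$-finite vectors and passing to a locally uniform limit of holomorphic functions---is a perfectly good alternative to the paper's device, which instead first establishes weak \emph{continuity} of $\nu\mapsto\tilde\lambda_\nu(u)$ and then forces holomorphy by a polydisc Cauchy integral.

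There is, however, a genuine gap in your ``crux'' estimate. You assert that $|\tilde\lambda_\nu(u)|\le C\,\|u\|_{\nu_0,s_0}$, arguing that $\tilde\lambda_\nu$ is represented by Cauchy--Schwarz pairing against some $h_\nu$ in the Hilbert completion $H_{\nu,s_1}$. But the representation one actually has is $\lambda_\nu(u)=\langle u,h_\nu\rangle$ in the \emph{original} inner product, with $h_\nu$ merely lying in $H_{\nu,s_1}$; this is not the same as $\lambda_\nu(u)=\langle u,h_\nu\rangle_{\nu,s_1}$, and Cauchy--Schwarz in $\langle\cdot,\cdot\rangle_{\nu,s_1}$ does not apply. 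Since $\|\cdot\|_{\nu_0,s_0}\le M_{s_0}\|\cdot\|$ is \emph{weaker} than the original norm, your estimate would force each $\tilde\lambda_\nu$ to extend continuously to the strictly larger Hilbert space $H_{\nu_0,s_0}$, which is not in general true and is not what Theorem~\ref{CW-dual} delivers.

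What the paper actually proves is an estimate against a \emph{stronger} seminorm: one shows (steps II and III there) that $\|v\|\le M_U\,\|(1+C_K)^{k_U}v\|_{\nu,s-\xi}$ uniformly for $\nu\in\omega$, whence $\|(1+C_K)^{-k_U}\tau_\gamma(\lambda_\nu)\|\le M_U\,\|\tau_\gamma(\lambda_\nu)\|_{\nu,s-\xi}$ and therefore $(1+C_K)^{-k_U}h_\nu$ lies in $H$ with uniformly bounded norm. This yields
\[
|\tilde\lambda_\nu(u)|\;=\;|\langle (1+C_K)^{k_U}u,\,(1+C_K)^{-k_U}h_\nu\rangle|\;\le\;C\,\|(1+C_K)^{k_U}u\|,
\]
a bound by a Sobolev-type seminorm that \emph{is} continuous on $I_\sigma^\infty$. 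With this corrected seminorm in place of $\|\cdot\|_{\nu_0,s_0}$, your density-plus-uniform-limit argument goes through unchanged. So the fix is not conceptual but a matter of tracking which inner product governs the pairing and recognizing that Lemma~\ref{keylemma} feeds into a Sobolev embedding (the comparison $\|\cdot\|\lesssim\|(1+C_K)^k\cdot\|_{\nu,s}$), rather than directly into a bound by $\|\cdot\|_{\nu,s}$ itself.
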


The proof follows the method of the proof of Proposition 11.6.2 in
\cite{RRG1-11} to prove a continuous version of the theorem. The holomorphic
version will be derived from the continuous version. Let $\nu_{o}\in\left(
\mathfrak{a}_{2}\right)  _{\mathbb{C}}^{\ast}$ $\ $and let $U\subset W$ be an
open neighborhood of $\nu_{o}$ with compact closure in $\left(  \mathfrak{a}%
_{2}\right)  _{\mathbb{C}}^{\ast}$ then as above there exists $B_{U},s_{U}$
such that%
\[
\left\Vert \pi_{\nu}(g)\right\Vert \leq B_{U}\left\Vert g\right\Vert ^{r_{U}%
}.
\]
Also since the family $\lambda_{\nu}$ is of local uniform moderate growth
there exists for each $u\in A_{u}$ such that
\[
\overset{}{(\ast)}\left\vert f_{\lambda_{\nu},u}(g)\right\vert \leq
A_{u}\left\Vert g\right\Vert ^{m}%
\]
for $x\in U$. Set $s\geq\max\left\{  r_{U},m\right\}  .$As above, let
$f_{1},...,f_{n}$ be an orthonormal basis of a $K$ and $Z(\mathfrak{g}%
)$--invariant subspace of $W$ in $H_{K}$ such that $I_{\sigma}=d\pi_{\nu
}\left(  U(\mathfrak{g})\right)  W$ for $\nu\in U$ (Theorem \ref{loc-finite}).
If $v,w\in H$ we have as above
\[
\left\langle v,w\right\rangle _{\nu,s}=\sum_{i=1}^{n}\int_{G}\left\langle
\pi_{\nu}(g)f_{i},w\right\rangle \left\langle v,\pi_{\nu}(g)f_{i}\right\rangle
\left\Vert g\right\Vert ^{-2s-d_{o}}dg.
\]
This integral converges uniformly in $x\in U$ since there exists $C>0$ such
that
\[
\left\vert \left\langle \mu_{x}(g)v_{i},w\right\rangle \left\langle v,\mu
_{x}(g)v_{i}\right\rangle \right\vert \leq L_{s}\left\Vert g\right\Vert
^{2s}\left\Vert v\right\Vert \left\Vert w\right\Vert
\]
which also implies%
\[
\left\Vert v\right\Vert _{\nu,s}^{2}\leq M_{s}\left\Vert v\right\Vert ^{2}.
\]
Set $H_{\nu,s}$ equal to the Hilbert space completion of $H$ with respect to
$\left\langle ...,...\right\rangle _{\nu,s}$ for $x\in U$. Noting that
relative to $\left\langle ...,...\right\rangle _{\nu,s}$ the action of $K$ is
unitary, so the action of $K$ on $H$ extends to $H_{\nu,s}$. Let $\hat{\pi
}_{\nu}(g)=\pi_{\nu}(g^{-1})^{\ast}$ with respect to $\left\langle
...,...\right\rangle $. If $\sigma$ is unitary and if $Lie(A_{1})\subset
\ker\rho$ then $\hat{\pi}_{\nu}=\pi_{-\bar{\nu}}$. Also note that%
\[
\left\Vert \hat{\pi}_{\nu}(h)u\right\Vert _{\nu,s}^{2}=\sum_{i=1}^{n}\int%
_{G}\left\vert \left\langle \pi_{\nu}(g)f_{i},\hat{\pi}_{\nu}(h)u\right\rangle
\right\vert ^{2}\left\Vert g\right\Vert ^{-2s-d_{o}}dg=
\]%
\[
\sum_{i=1}^{n}\int_{G}\left\vert \left\langle \pi_{\nu}(h^{-1}g)v_{i}%
,v\right\rangle \right\vert ^{2}\left\Vert g\right\Vert ^{-2s-d_{o}}%
dg=\sum_{i=1}^{n}\int_{G}\left\vert \left\langle \pi_{\nu}(g)v_{i}%
,v\right\rangle \right\vert ^{2}\left\Vert hg\right\Vert ^{-2s-d_{o}}dg.
\]
If $h,g\in G$
\[
\left\Vert g\right\Vert =\left\Vert h^{-1}hg\right\Vert \leq\left\Vert
h^{-1}\right\Vert \left\Vert hg\right\Vert =\left\Vert h\right\Vert \left\Vert
hg\right\Vert
\]
so
\[
\left\Vert hg\right\Vert \geq\left\Vert h\right\Vert ^{-1}\left\Vert
g\right\Vert .
\]
Hence $\left\Vert hg\right\Vert ^{-2s-d_{o}}\leq\left\Vert h\right\Vert
^{2s+d_{o}}\left\Vert g\right\Vert ^{-2s-d_{o}}$. Thus%
\[
\left\Vert \hat{\pi}_{\nu}(h)v\right\Vert _{1,x}^{2}\leq\left\Vert
h\right\Vert ^{2s+d_{o}}\left\Vert v\right\Vert _{1,x}^{2}.
\]
Using this, it is easily seen that for each $x\in U$, $\hat{\pi}_{\nu}(h)$
extends to a strongly continuous representation of $G$ on $H_{\nu,s}$.

I. The $K-C^{\infty}$ vectors of $(\hat{\pi}_{\nu,s},H_{\nu,s})$ are the same
as the $G-C^{\infty}$ vectors.

To prove this assertion note that if $C$ is the Casimir operator of $G$
corresponding to the invariant form $B$ (see the beginning of section 2). Also
set $C_{K}$ equal to the Casimir operator of $K$ corresponding to
$B_{|\mathfrak{k}}.$Set $\Delta=C-2C_{K}$. Then elliptic regularity implies
that the $C^{\infty}$ vectors of $G$ in $H_{1,x}$ are the completion of
$V_{K}=H_{K}=\left(  H_{1}\right)  _{K}$ with respect to the seminorms
$p_{k,x}(v)=\left\Vert d\hat{\pi}_{\nu}(h)(\Delta^{k})v\right\Vert _{1,x}$.

One has
\[
\Delta^{k}=\sum_{j=0}^{k}(-2)^{j}\binom{k}{j}C^{k-j}C_{K}^{j}.
\]
If $v\in V$ then
\[
\left\Vert d\hat{\mu}_{x}(C)v\right\Vert _{1,x}^{2}=\sum_{i=1}^{n}\int%
_{G}|\left\langle f_{i},\hat{\pi}_{\nu}(g)d\hat{\pi}_{\nu}((C)v\right\rangle
|^{2}\left\Vert g\right\Vert ^{-2s-d_{o}}dg=
\]%
\[
\sum_{i=1}^{n}\int_{G}|\left\langle d\mu_{x}(C)f_{i},\hat{\pi}_{\nu
}(h)(g)v\right\rangle |^{2}\left\Vert g\right\Vert ^{-2s-d_{o}}dg.
\]
Now $d\pi_{\nu}(C)f_{i}=\sum a_{ji}(\nu)f_{j}$ hence $\left\langle d\pi_{\nu
}(C)f_{i},\hat{\pi}_{\nu}(g)v\right\rangle =\sum a_{ji}(x)\left\langle
f_{j},\widehat{\pi}_{\nu}(g)v\right\rangle .$Hence setting $A=\max_{ij},_{x\in
U}\{|a_{ij}(x)|\}$
\[
|\left\langle d\pi_{\nu}(C)f_{i},\widehat{\pi}_{\nu}(g)v\right\rangle
|=|\sum_{j}a_{ji}\left\langle f_{j},\widehat{\pi}_{\nu}(g)v\right\rangle |\leq
A\sum_{j}\left\vert \left\langle f_{j},\widehat{\pi}_{\nu}(g)v\right\rangle
\right\vert
\]
so%
\[
\sum_{i}|\left\langle d\pi_{\nu}(C)f_{i},\widehat{\pi}_{\nu}(g)v\right\rangle
|^{2}\leq A^{2}\left(  \sum_{j}\left\vert \left\langle f_{j},\widehat{\pi
}_{\nu}(g)v\right\rangle \right\vert \right)  ^{2}\leq
\]%
\[
nA^{2}\sum_{i}|\left\langle v_{i},\widehat{\pi}_{\nu}(g)v\right\rangle |^{2}.
\]
Thus%
\[
\left\Vert d\hat{\pi}_{\nu}(C)v\right\Vert _{\nu,s}\leq\sqrt{n}A\left\Vert
v\right\Vert _{1,x}.
\]
Set $B=\sqrt{n}A.$Then%
\[
\left\Vert d\hat{\pi}_{\nu}\left(  \Delta^{k}\right)  v\right\Vert _{\nu
,s}=\left\Vert \sum_{j=0}^{k}(-2)^{j}\binom{k}{j}d\hat{\pi}_{\nu}\left(
C^{k-j}\right)  C_{K}^{j}v\right\Vert _{1,x}\leq
\]%
\[
\sum_{j=0}^{k}(2)^{j}\binom{k}{j}B^{k-j}\left\Vert C_{K}^{j}v\right\Vert
_{1,x}\leq\sum_{j=0}^{k}(2)^{j}\binom{k}{j}B^{k-j}\left\Vert (1+C_{K}%
)^{j}v\right\Vert _{1,x}%
\]
The $K-C^{\infty}$ vectors are the completion of $I_{\sigma}$ using the
seminorms $q_{k,x}(v)=\left\Vert (I+C_{K})^{k}v\right\Vert _{1,x}$. This
proves I.

Set $\mu_{1,x}(g)$ equal to the adjoint of $\hat{\mu}_{x}(g^{-1})$ with
respect to $\left\langle ...,...\right\rangle _{1,x}$. Then the space
$K$--finite vectors of $\mu_{1,x}$ is $\left(  H_{1,x}\right)  _{K}%
=H_{K}=V_{K}$ and the corresponding $(\mathfrak{g},K)$--module is the
conjugate dual to $V_{K}$ and $d\hat{\mu}_{x}$ which is the same as the action
of $d\mu_{x}$. The CW theorem implies that the space of $C^{\infty}$--vectors
of $\mu_{1,x}$ is $V$. In particular, if $u\in H_{1,x}$ then the functional
$v\longmapsto\left\langle v,u\right\rangle _{1,x}$ is a continuous functional
on $V$.

Note that if $\mu\in I_{\sigma}^{\ast}$ then for each $\gamma\in\hat{K}$ there
exists $w_{\gamma}\in I_{\sigma}(\gamma)$ such that $\mu(v)=\left\langle
v,w_{\gamma}\right\rangle $ for $v\in V(\gamma)$. Let $E\gamma$ denote the
projection of $V_{K}$ to $V(\gamma)$ corresponding to the direct sum
decomposition $V_{K}=\oplus_{\gamma\in\hat{K}}V(\gamma)$. Set $\tau_{\gamma
}(\mu)=w_{\gamma}$. $\mu_{\gamma}=\mu\circ E_{\gamma}$.

If $v\in I_{\sigma}$ and if $\gamma\in\hat{K}$ and if $\chi_{\gamma}$ is the
character of $\gamma$ and $d(\gamma)$ is the dimension of $\gamma$ and if%
\[
f_{\lambda_{\nu},u,\gamma}(g)=d(\gamma)\int_{K}\chi_{\gamma}(k)f_{\lambda
_{x},u}(k^{-1}g)dk
\]
then 1. and 2. above imply
\[
f_{\lambda_{\nu},u,\gamma}(1)=d(\gamma)\int_{K}\chi_{\gamma}(k)f_{\lambda
_{x},u}(k^{-1})dk=\lambda_{x}(E_{\gamma}u).
\]
Also 2. implies that
\[
xf_{\lambda_{x}u,\gamma}=f_{\lambda_{\nu},d\pi_{\nu}(u)u,\gamma}%
,R(k)f_{\lambda_{x},u,\gamma}=f_{\lambda_{x},\pi(k)u,\gamma},u\in
U(\mathfrak{g}),k\in K.
\]
Thus
\[
f_{\lambda_{\nu},u,\gamma}=f_{(\lambda_{\nu})_{\gamma},u}.
\]
Also
\[
f_{\left(  \lambda_{x}\right)  _{\gamma},v}(g)=\left\langle \mu_{x}%
(g)v,\tau_{\gamma}(\lambda_{x})\right\rangle .
\]
We will now show that the series $\sum\left\Vert \tau_{\gamma}(\lambda_{\nu
})\right\Vert _{\nu,s}^{2}$ converges uniformly in $x\in U$. Indeed, the Schur
orthogonality relations and the $K$ bi-invariance of the \ norm on $G$ imply
that if $v\in V_{K}$ then $(\ast)$ above implies that there exists a constant,
$C_{u}$, depending only on $u$ such that for all $x\in U$
\[
\infty>C_{u}\geq\int_{G}|f_{\lambda_{\nu},u}(g)|^{2}\left\Vert g\right\Vert
^{-2s-d_{o}}dg=\sum_{\gamma\in\widehat{K}}\int_{G}|f_{\left(  \lambda_{\nu
}\right)  _{\gamma},u}(g)|^{2}\left\Vert g\right\Vert ^{-2s-d_{o}}dg.
\]
Hence%
\[
\infty>\sum C_{f_{i}}>\sum_{i=1}^{n}\sum_{\gamma\in\widehat{K}}\int%
_{G}|f_{\left(  \lambda_{\nu}\right)  _{\gamma},f_{i}}(g)|^{2}\left\Vert
g\right\Vert ^{-2s-d_{o}}dg=
\]%
\[
\sum_{i=1}^{n}\sum_{\gamma\in\widehat{K}}\int_{G}|\left\langle \pi_{\nu
}(g)f_{i},\tau_{\gamma}(\lambda_{\nu})\right\rangle |^{2}\left\Vert
g\right\Vert ^{-2s-d_{o}}dg=
\]%
\[
\sum\left\Vert \tau_{\gamma}(\lambda_{x})\right\Vert _{\nu,s}^{2}.
\]

II. There exist constants $B_{s}$ and $k_{s}$ such that
\[
\left\Vert u\right\Vert \leq B\left\Vert (1+C_{K})^{k}u\right\Vert _{\nu
_{o},s},u\in I_{\sigma}^{\infty}.
\]
for each $s\geq s_{U}$.

To prove this assertion we note that $I_{\sigma}^{\infty}$ is the space of
$K$--$C^{\infty}$ ($=G-C^{\infty}$) vectors for $\hat{\pi}_{\sigma}$ acting on
$H$ and every $H_{\nu,s}$ with $\nu\in U$ and $s\geq s_{U}$. Indeed, this is a
poinwise theorem that was proved in \cite{RRG1-11} (3),pp. 90-91(in line 7 of
p.91 there is a misprint the power of $\left\vert t\right\vert $ the formula
on that line should be $1$ not $2$). This implies that the underlying
$(\mathfrak{g},K)$ module of each of the $(\hat{\pi},H_{\nu,s})$ is
$(d\hat{\pi}_{\nu},I_{\sigma})$. Hence the Casselmann-Wallach globalization of
$(d\hat{\pi}_{\nu},I_{\sigma})$ is $(\hat{\pi},H_{\nu,s}^{\infty})$. Sicne it
is also $(\hat{\pi},H^{\infty})$, $\left\Vert ...\right\Vert $is a continuous
norm on $H_{\nu,s}^{\infty}$ for each $s\geq s_{U}$. This completes the proof
of II.

III. Let $\xi$ be as in the previous lemma. Assume that $s-\xi\geq\max\left\{
r_{U},m\right\}  $. Then there exists $M_{U}$ and $k_{U}$ such that%
\[
\left\Vert u\right\Vert \leq M_{U}\left\Vert (1+C_{K})^{k_{U}}u\right\Vert
_{\nu,s-\xi},u\in I_{\sigma}^{\infty}.
\]

This follows from Lemma \ref{keylemma}. Which says that if $u\in I_{\sigma
}^{\infty}$%
\[
\left\Vert u\right\Vert _{\nu_{o},s}\leq L\left\Vert u\right\Vert _{\nu,s-\xi
}.
\]
Thus
\[
\left\Vert u\right\Vert \leq LM_{U}\left\Vert (1+C_{K})^{k}u\right\Vert
_{\nu,s-\xi}.
\]

Finally we have provedt

IV. The extension of $\lambda_{\nu}$ to $I_{\sigma}^{\infty}$ for $\nu\in W$
depends weakly continuously on $\nu$ (that is, if $u\in I_{\sigma}^{\infty}$
then $\nu\longmapsto\lambda_{\nu}(u)$ is continuous).

Indeed, II let $\nu_{o}\in W$ and let $U$ be as above, an open neighborhood of
$\nu_{o}$ in $W$ with compact closure then Lemma \ref{keylemma} implies that
the series
\[
(1+C_{K})^{-k_{U}}\tau_{\gamma}(\lambda_{\nu})
\]
converges in $H$ uniformly in $\nu\in U$..

We now complete the proof of the theorem. Let $\nu_{o}\in W$ and let $\nu
_{1},...,\nu_{m}$ be a basis of $\left(  \mathfrak{a}_{2}\right)
_{\mathbb{C}}^{\ast}$ such $W$ contains $\nu_{o}+\overline{D}$ with
$\overline{D}$ the \ closure of the polydisk $D=\{\sum z_{i}\nu_{i}%
||z_{i}|<1,i=1,...,m\}.$ It is enough to prove the holomorphy assertion on
$D$. If $\nu\in D,$ $\nu-\nu_{o}=\sum x_{i}(\nu)\nu_{i}$. Define $\xi_{\nu}$
for $\nu\in D$ by%
\[
\xi_{\nu}(u)=\frac{1}{\left(  2\pi i\right)  ^{m}}\int_{S^{1}\times
S^{1}\times\cdots\times S^{1}}\frac{\lambda_{\nu}(u)dz_{1}\cdots dz_{m}}%
{\prod_{j=1}^{m}z_{j}-x_{j}(\nu)}.
\]
This integral defines a holomorphic function of $\nu$ on $\nu_{o}+D$ for each
$u\in I_{\sigma}^{\infty}$. \ If $u\in I_{\sigma}$ then $\xi_{\nu}%
(u)=\lambda_{\nu}(u).$ Thus $\xi_{\nu}=\lambda_{\nu}$ on $I_{\sigma}^{\infty}$.

\section{Application to $C^{\infty}$ Eisenstein series}

This section will involve terminology that would take us too far afield to
explain completely. Also, only those that would be bored with the explanations
would be interested in the results. For details in what is omitted we suggest
Langlands \cite{EisenL}. Let $G$ be a real reductive group of inner type. Let
$\Gamma$ be a discrete subgroup of $G$ such that $\Gamma\backslash G$ has
finite volume. The results of this section will be true for the class of $G$
and $\Gamma$ described in Chapter 1 of Langlands \cite{FuncEisen}. However, we
will only consider the subclass of $G=\mathbf{G}_{\mathbb{R}}$ the real points
of an algebraic group, $\mathbf{G}$, defined over $\mathbb{Q}$ satisfying one
more condition which we will describe later in this paragraph. and $\Gamma$ a
subgroup that is of finite index in the points of a $\mathbb{Z}$--form of
$\mathbf{G}_{\mathbb{Q}}$ (the $\mathbb{Q}$--points), i.e. an arithmetic
subgroup. A cuspidal parabolic subgroup of $G$ is the normalizer $P$ of a
parabolic subgroup $\mathbf{P}$ of $\mathbf{G}$ defined over $\mathbb{Q}$.
Then $P$ has a $\mathbb{Q}$--Langlands decomposition $P=MAN$ with $N$ the
unipotent radical of $P$ and $M$ the intersection of the kernels of $\chi^{2}$
with $\chi:M_{P}\rightarrow\mathbb{R}^{\times}$ a character defined over
$\mathbb{Q}$ and $M_{P}$ is a Levi-factor of $P$ that is defined over
$\mathbb{Q}$. The other condition is that the \textquotedblleft$A$%
\textquotedblright\ in the Langlands decomposition of $G$ is trivial. Then
$\Gamma\cap P\subset MN$ and identifying $M$ with $MN/N$ then $\Gamma
_{M}=\left(  \Gamma\cap P\right)  /\left(  \Gamma\cap N\right)  $ is an
arithmetic subgroup of $M$.

Throughout this section $P$ will be a fixed Let $V$ be space of $C^{\infty}$
vectors of a closed, $M$--invariant, irreducible subspace of $L^{2}(\Gamma
_{M}\backslash M)$. Let $\sigma$ denote the right regular action of $M$ on $V
$. Let $K$ be a maximal compact subgroup of $G$ such that $M\cap K$ is maximal
compact in $M$. We consider the smooth representation $(\pi_{\nu}%
,I_{V}^{\infty})$ where $\nu\in\mathfrak{a}_{\mathbb{C}}^{\ast}$,
$\mathfrak{a}=Lie(A)$ and $I_{V}^{\infty}$ is the space of all $C^{\infty}$
functions from $K$ to $V$ such that $f(mk)=\sigma(m)f(k)$ for $m\in K\cap M$
and $k\in K$. If $f\in I_{V}^{\infty}$ define $f_{\nu}(nmak)=a^{\nu+\rho
}\sigma(m)f(k)$ for $n\in N,m\in M,a\in A,k\in K$ and $\rho(h)=\frac{1}%
{2}tr(ad(h)_{|Lie(N)})$ for $h\in\mathfrak{a}$. \ Then since the ambiguity in
the expression of an element $g\in G$ as $g=namk,n\in N,a\in A,m\in M,k\in K$
is in $M\cap K$. $f_{\nu}$ is a $C^{\infty}$ map of $G$ to $V$. We define
$\pi_{\nu}(g)f(k)=f_{\nu}(kg)$. Endow $I_{V}^{\infty}$ with the $C^{\infty}$
topology so $I_{V}^{\infty}$ is a Fr\'{e}chet space. Note that if we set
$\pi(\nu,g)=\pi_{\nu}(g)$ then $(\pi,I_{K}^{\infty})$ is a holomorphic family
of objects in $\mathcal{HF}(G)$ based on $\mathfrak{a}_{\mathbb{C}}^{\ast}$.

If $f\in I_{V}^{\infty}$ set $\mathbf{f}_{\nu}(nmak)=a^{\nu+\rho}f(k)(m)$ for
$n\in N,m\in M,a\in A,k\in K$. Then $\mathbf{f}_{\nu}\in C^{\infty}(\left(
\Gamma\cap P\right)  \backslash G)$. Consider%
\[
E(P,f,\nu)(g)=\sum_{\gamma\in\left(  \Gamma\cap P\right)  \backslash\Gamma
}\mathbf{f}_{\nu}(\gamma g).
\]
This series converges absolutely and uniformly in compacta in the set of all
$\nu\in\mathfrak{a}_{\mathbb{C}}^{\ast}$ with Re$\nu(\check{\alpha}%
)>\rho(\check{\alpha})$ for all roots $\alpha$ of $A$ acting on $Lie(N)$.
Langlands has shown that if $f$ is in $\left(  I_{V}^{\infty}\right)  _{K}$
then this series has a meromorphic continuation to all $\nu\in\mathfrak{a}%
_{\mathbb{C}}^{\ast}$. In this section a proof will be given that the
meromorphic continuation is true for all $f\in I_{V}^{\infty}$.

Note in the set above where the series converges $E(P,f,\nu)$ is an
automorphic function. Here, a smooth function, $\varphi,$ on $\Gamma\backslash
G$ is called an automorphic function if

1. $\varphi$ is $Z(\mathfrak{g})$--finite and

2. There exists $d$ such that if $u\in U(\mathfrak{g})$ looked upon as a left
invariant differential operator then $\left\vert u\varphi(g)\right\vert \leq
C_{u}\left\Vert g\right\Vert ^{d}$ for all $g\in G$.

Usually the condition that $\varphi$ is right $K$--finite is also included in
the definition.

\begin{lemma}
If $g\in G,f\in I_{V}^{\infty}$ then in the range of convergence
\[
E(P,\pi_{\nu}(g)f,\nu)=\pi_{\Gamma}(g)E(P,f,\nu).
\]
Here $\pi_{\Gamma}$ is the right regular representation of $G$ on
$\Gamma\backslash G$.
\end{lemma}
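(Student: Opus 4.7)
The plan is to reduce the identity to a pointwise cocycle statement for the $V$-valued lifts $f_\nu$ and then evaluate at $1\in M$ to pass to the scalar lifts $\mathbf{f}_\nu$ used in the Eisenstein series. The main step is to prove that for all $f\in I_V^\infty$ and all $y,g\in G$,
\[
(\pi_\nu(g)f)_\nu(y) \;=\; f_\nu(yg)
\]
as $V$-valued functions. Both sides are smooth on $G$ and satisfy the same equivariance $\varphi(nma\,\cdot) = a^{\nu+\rho}\sigma(m)\varphi(\cdot)$: the left side by the very definition of $h_\nu$ applied to $h = \pi_\nu(g)f$, the right side because $f_\nu$ already satisfies this equivariance in its argument. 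Since $G = NMAK$ and the equivariance rule determines a smooth function on $G$ from its restriction to $K$, it suffices to check the equality on $K$, where the definition of $\pi_\nu(g)$ gives
\[
(\pi_\nu(g)f)_\nu(k) = (\pi_\nu(g)f)(k) = f_\nu(kg),\qquad k\in K.
\]

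Next I would observe that the scalar lift is simply evaluation at $1\in M$: writing $h(k)\in V$ as a function on $M$, and using that $\sigma$ is the right regular representation, one has $(\sigma(m)h(k))(1) = h(k)(m)$, so
\[
h_\nu(nmak)(1) = a^{\nu+\rho}(\sigma(m)h(k))(1) = a^{\nu+\rho}h(k)(m) = \mathbf{h}_\nu(nmak).
\]
Evaluating the cocycle identity at $1\in M$ with $h = \pi_\nu(g)f$ therefore gives $\mathbf{h}_\nu(y) = \mathbf{f}_\nu(yg)$ for all $y\in G$.

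Finally, substituting $y=\gamma x$ and summing over $\gamma\in(\Gamma\cap P)\backslash\Gamma$ yields
\[
E(P,\pi_\nu(g)f,\nu)(x) = \sum_\gamma \mathbf{h}_\nu(\gamma x) = \sum_\gamma \mathbf{f}_\nu(\gamma xg) = E(P,f,\nu)(xg) = \pi_\Gamma(g)E(P,f,\nu)(x).
\]
The two sums have the same terms after the translation $x\mapsto xg$, so the absolute convergence of the Eisenstein series for $f$ in the stated range of $\nu$ transfers to $\pi_\nu(g)f$, uniformly on compacta in $x$, and legitimizes the termwise rearrangement. The only real content is the pointwise cocycle identity for $f_\nu$, which is immediate from the Iwasawa-type decomposition $G=NMAK$ and the definitions; there is no serious obstacle, and the remainder of the argument is notational bookkeeping between $f_\nu$ and $\mathbf{f}_\nu$.
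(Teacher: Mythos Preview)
Your proof is correct and follows the same approach as the paper. The paper's proof consists of the single observation $\mathbf{f}_\nu(g)=f_\nu(g)(e)$, leaving the cocycle identity $(\pi_\nu(g)f)_\nu(y)=f_\nu(yg)$ and the termwise substitution implicit; you have simply written out these routine steps in full.
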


\begin{proof}
This follows from
\[
\mathbf{f}_{\nu}(g)=f_{\nu}(g)(e)
\]
with $e$ the identity element of $G$ hence of $M$.
\end{proof}

If $f\in\left(  I_{V}^{\infty}\right)  _{K}$ and $\nu_{o}\in\mathfrak{a}%
_{\mathbb{C}}^{\ast}$ then there exists an open neighborhood of $\nu_{o}$, $U
$, in $\mathfrak{a}_{\mathbb{C}}^{\ast}$ and $\alpha$ a non-zero holomorphic
function on $U$ such that $\nu\mapsto\alpha(\nu)E(P,f,\nu)(g)$ is holomorphic.
Let $\mathcal{S}(f,\nu_{o})$ be the set of pairs $(U,\alpha)$ with $U$ in
$\mathfrak{a}_{\mathbb{C}}^{\ast}$ and $\alpha$ a non-zero holomorphic
function on $U$ such that
\[
\nu\mapsto\alpha(\nu)E(P,f,\nu)(g)
\]
is holomorphic on $U$. If $W$ is an open subset of $\mathfrak{a}_{\mathbb{C}%
}^{\ast}$ with compact closure then there exists a finite subset $F_{W}%
\subset\hat{K}$ such that
\[
d\pi_{\nu}(U(\mathfrak{g}_{\mathbb{C}}))\left(  \sum_{\gamma\in F_{W}}%
I_{V}^{\infty}(\gamma)\right)  =\left(  I_{V}^{\infty}\right)  _{K}%
\]
for $\nu\in W$ (Theorem \ref{loc-finite}). Let $f_{1},...,f_{m}$ be a basis of
$\sum_{\gamma\in F_{W}}I_{V}^{\infty}(\gamma)$ then if $\nu_{o}\in W$ and
$(U_{i},\alpha_{i})\in\mathcal{S}(f_{i},\nu_{o})$ then if $\beta=\alpha
_{1}\cdots\alpha_{m}$, $Z=U_{1}\cap\cdots\cap U_{m}\cap W$ then $\nu
\mapsto\beta(\nu)E(P,f,\nu)(g)$ is holomorphic in $\nu\in Z$ for all $g\in G$
and $f\in\left(  I_{V}^{\infty}\right)  _{K}$.

\begin{proposition}
Let $Z$ be open in $\mathfrak{a}_{\mathbb{C}}^{\ast}$ with compact closure
such that there exists $\beta$ holomorphic on $Z$ and continuous in the
closure of $\ Z$ such that $\nu\mapsto\beta(\nu)E(P,f,\nu)(g)$ is holomorphic
on $Z$ for all $g\in G$ and $f\in\left(  I_{V}^{\infty}\right)  _{K}.$ Then
there exists $d_{Z}$ such that%
\[
\left\vert \beta(\nu)E(P,f,\nu)(g)\right\vert \leq C_{f}\left\Vert
g\right\Vert ^{d_{Z}},f\in\left(  I_{V}^{\infty}\right)  _{K},g\in G.
\]

\end{proposition}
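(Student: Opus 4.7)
The goal is a uniform moderate-growth estimate for the family $\varphi_{\nu,f}(g)=\beta(\nu)E(P,f,\nu)(g)$, $\nu\in Z$. I plan to build the proof from two ingredients: the classical moderate-growth property of $Z(\mathfrak{g})$-finite, $K$-finite automorphic forms (valid pointwise in $\nu$), and the uniformity machinery of Section~6. For each $\nu\in Z$, $\varphi_{\nu,f}$ is an automorphic function on $\Gamma\backslash G$: it is $K$-finite (with $K$-types those of $f$), $Z(\mathfrak{g})$-finite with infinitesimal character $\chi_\nu$, and smooth. By Langlands \cite{FuncEisen} it has some moderate growth with exponent $d(\nu,f)$ and constant $C(\nu,f)$; the task is to show that both may be chosen independently of $\nu\in Z$, and the exponent independently of $f$ as well.

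For the uniform exponent I would apply Theorem~\ref{loc-finite} to the holomorphic family $(\pi,I_V^{\infty})$ on the compact closure $\overline{Z}$, obtaining a finite $S\subset\widehat{K}$ which $d\pi_\nu$-generates $I_V$ from $\sum_{\gamma\in S}I_V(\gamma)$ for every $\nu\in\overline{Z}$. The infinitesimal characters $\chi_\nu$ then vary continuously through a compact set $\mathcal{X}_Z$ of characters of $Z(\mathfrak{g})$ as $\nu$ runs over $\overline{Z}$. The standard theory of automorphic forms (asymptotic expansions along cusps via the theory of the constant term) yields a single moderate-growth exponent $d_Z$ that bounds every $K$-finite, $Z(\mathfrak{g})$-finite automorphic form whose infinitesimal character lies in $\mathcal{X}_Z$; this is the $d_Z$ of the statement.

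For the uniform constant I would exploit that $\nu\mapsto\varphi_{\nu,f}$ is a holomorphic map from $Z$ into the Fr\'echet space $C^{\infty}(\Gamma\backslash G)$, so that for any continuous $C^{\infty}$-seminorm $p_\Omega$ on a compact piece $\Omega\subset\Gamma\backslash G$ (e.g.\ a truncated Siegel set), $\nu\mapsto p_\Omega(\varphi_{\nu,f})$ is continuous on $Z$. A Siegel-domain argument then converts a uniform bound on a sufficiently large $\Omega$, combined with the uniform exponent $d_Z$ governing the cuspidal tails, into the global bound $|\varphi_{\nu,f}(g)|\leq C_f\|g\|^{d_Z}$. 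The main obstacle will be the boundary $\partial Z$: since the hypothesis provides only continuity of $\beta$ (not of $\beta E$) on $\overline{Z}$, keeping the bounding constant finite as $\nu\to\partial Z$ requires using the explicit pole-and-residue structure of Langlands' $K$-finite Eisenstein series together with the vanishing of $\beta$ along those pole divisors, so that $\beta E$ indeed extends to a Fr\'echet-continuous family of automorphic functions over all of $\overline{Z}$ and the constant can be taken as the supremum of a continuous seminorm over a compact set.
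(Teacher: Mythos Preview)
Your overall plan---extract a uniform growth exponent from compactness of $\overline{Z}$, then a uniform constant---is the right shape, but the execution diverges from the paper in ways that matter.

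For the exponent, you appeal to ``standard theory of automorphic forms'' asserting that a single $d_Z$ works for every $K$-finite, $Z(\mathfrak{g})$-finite automorphic form with infinitesimal character in a compact set. The paper does not argue this way and does not use Theorem~\ref{loc-finite} here at all. Instead it invokes two precise external results: Langlands' Lemma~5.1 in \cite{FuncEisen}, which bounds an automorphic form by $\|g\|^{\max d_{ij}+1}$ once the exponents $\mu_{ij}(\nu)$ of its constant terms along the rank-one parabolics containing $P$ satisfy $a^{\operatorname{Re}\mu_{ij}(\nu)}\le C\|a\|^{d_{ij}}$; and Wallach's observation in \cite{Const-term}, which identifies these $\mu_{ij}(\nu)$ as restrictions of exponents of the $(\mathfrak{g},K)$-module $(\pi_\nu,(I_V^\infty)_K)$. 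The latter are then bounded in norm by the Harish-Chandra parameters of $(\pi_\nu,(I_V^\infty)_K)$, an explicit continuous function of $\nu$ on the compact set $\overline{Z}$. This gives $d_Z$ directly. Your version skips this mechanism, and without it your claim is just an assertion.

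For the constant $C_f$, your proposed route---holomorphy of $\nu\mapsto\varphi_{\nu,f}$ into $C^\infty(\Gamma\backslash G)$, a Siegel-domain cutoff, and then a delicate boundary analysis on $\partial Z$ using the pole structure of the Eisenstein series---is considerably more elaborate than what the paper does, and you yourself flag the boundary step as an obstacle. In the paper's approach the constant is not handled separately: once the exponents are uniformly bounded, Langlands' Lemma~5.1 already packages the constant together with the exponent. Your boundary difficulty is a self-inflicted consequence of trying to get $C_f$ by taking a supremum of a seminorm over the noncompact set $Z$, rather than letting the constant-term machinery produce it.

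In short: the paper's proof is a two-line reduction to \cite{FuncEisen} Lemma~5.1 plus \cite{Const-term}; your proposal reinvents a weaker version of this with vaguer inputs and an extra, unresolved step.
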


\begin{proof}
Lemma 5.1 in \cite{FuncEisen} implies that if the constant terms of $\beta
(\nu)E(P,f,\nu)$ relative to $\mathbb{Q}$--rank one parabolic subgroups
$P_{i}$ containing $P$ have exponents $a^{\mu_{i,j}(\nu)}$ and if
\[
a^{\operatorname{Re}\mu_{ij}(\nu)}\leq C\left\Vert a\right\Vert ^{d_{ij}}%
\]
then
\[
\left\vert \beta(\nu)E(P,f,\nu)(g)\right\vert \leq C_{1}\left\Vert
g\right\Vert ^{\max_{ij}d_{ij}+1}%
\]
for $g\in G$ ($1$ added to the exponent is to dominate the logarithmic term in
Langlands' inequality). On the other hand, the main observation in
\cite{Const-term} implies that the $\mu_{ij}$ are restrictions of exponents of
the $(\mathfrak{g},K)$--module $(\pi_{\nu},\left(  I_{V}^{\infty}\right)
_{K}) $. This implies that $\left\Vert \mu_{ij}(\nu)\right\Vert $ is bounded
by the maximum of the norms of the Harish-Chandra parameters of $(\pi_{\nu
},\left(  I_{V}^{\infty}\right)  _{K})$ (here the norms are with respect to
the Hermitian extension of the inner product $-B(X,\theta Y)$ on
$\mathfrak{g}$). Thus since the closure of $Z$ is compact there exists $s$
such that $a^{\operatorname{Re}\mu_{ij}(\nu)}\leq C\left\Vert a\right\Vert
^{s}$ for $\nu\in Z$. Take $d_{Z}=s+1$.
\end{proof}

We are now ready to prove

\begin{theorem}
If $f\in I_{V}^{\infty}$ then $E(P,f,\nu)(g)$ has a meromorphic continuation
to $\mathfrak{a}_{\mathbb{C}}^{\ast}$.
\end{theorem}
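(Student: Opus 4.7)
The plan is to reduce the smooth case to the $K$--finite case via Theorem \ref{holextension}, applied pointwise in $\nu$. Fix $\nu_{o}\in\mathfrak{a}_{\mathbb{C}}^{\ast}$; it suffices to produce a meromorphic continuation in a neighborhood of $\nu_{o}$. Using the discussion preceding the last proposition (which combines Theorem \ref{loc-finite} with Langlands' theorem in the $K$--finite case), we may choose an open neighborhood $Z$ of $\nu_{o}$ with compact closure and a non-vanishing holomorphic function $\beta$ on $Z$ such that $\nu\mapsto\beta(\nu)E(P,f,\nu)(g)$ is holomorphic on $Z$ for every $f\in(I_{V}^{\infty})_{K}$ and every $g\in G$. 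We will construct the continuation on $Z$.

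Define, for $\nu\in Z$, a linear functional
\[
\lambda_{\nu}:(I_{V}^{\infty})_{K}\longrightarrow\mathbb{C},\qquad\lambda_{\nu}(f)=\beta(\nu)E(P,f,\nu)(e).
\]
The candidate matrix coefficient is $f_{\lambda_{\nu},f}(g)=\beta(\nu)E(P,f,\nu)(g)$. The equivariance lemma gives $f_{\lambda_{\nu},f}(g)=\lambda_{\nu}(\pi_{\nu}(g)f)$, which verifies properties 1 and 2 of $(I_{V,K})_{\operatorname{mod}}^{\ast}$; the function is real analytic in $g$ since automorphic functions are real analytic; the growth bound 3 is supplied by the preceding proposition in the form $|f_{\lambda_{\nu},f}(g)|\leq C_{f}\|g\|^{d_{Z}}$ with $d_{Z}$ independent of $\nu\in Z$ and of $f$. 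Thus $\lambda_{\nu}\in(I_{V,K})_{\operatorname{mod}}^{\ast}$, and the family $\nu\mapsto\lambda_{\nu}$ is holomorphic on $Z$ (by the choice of $\beta$) and of local uniform moderate growth (again by the proposition, with the same uniform exponent $d_{Z}$).

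Theorem \ref{holextension}, applied with $W=Z$ and with $I_{\sigma}=(I_{V}^{\infty})_{K}$, extends $\lambda_{\nu}$ to a weakly holomorphic family $\tilde\lambda_{\nu}\in(I_{V}^{\infty})^{\prime}$ on $Z$. Set
\[
\widetilde{E}(P,f,\nu)(g):=\frac{1}{\beta(\nu)}\,\tilde\lambda_{\nu}\bigl(\pi_{\nu}(g)f\bigr),\qquad f\in I_{V}^{\infty},\ g\in G,
\]
which is meromorphic in $\nu\in Z$ for each fixed $f,g$. Agreement with $E(P,f,\nu)(g)$ in the region of absolute convergence follows from two observations: in that region both $f\mapsto E(P,f,\nu)(g)$ and $f\mapsto\widetilde{E}(P,f,\nu)(g)$ define continuous linear functionals on the Fr\'echet space $I_{V}^{\infty}$ (the first from term-by-term majorization of the Eisenstein series by its values on a single $C^{\infty}$ seminorm, the second from Theorem \ref{holextension}), and they agree on the dense $K$--finite subspace $(I_{V}^{\infty})_{K}$. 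Since $\nu_{o}$ was arbitrary and the local factors $\beta$ are nonvanishing on their domains, we obtain the desired meromorphic continuation to all of $\mathfrak{a}_{\mathbb{C}}^{\ast}$.

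The principal obstacle is the second step: verifying that the family $\{\lambda_{\nu}\}$ really satisfies the hypotheses of Theorem \ref{holextension}. The crux is that the moderate-growth exponent $d_{Z}$ in property 3 must be locally uniform in $\nu$ and independent of $f\in(I_{V}^{\infty})_{K}$, so that the constants $A_{u}$ in the proof of Theorem \ref{holextension} exist. This is exactly the content of the preceding proposition, which in turn rests on Langlands' inequality together with the observation from \cite{Const-term} controlling the exponents of constant terms by the Harish-Chandra parameters of $(\pi_{\nu},(I_{V}^{\infty})_{K})$---a bound that is uniform on compact subsets of $\mathfrak{a}_{\mathbb{C}}^{\ast}$. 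With that uniform bound in hand, everything else is formal.
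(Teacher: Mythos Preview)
Your proof is correct and follows essentially the same route as the paper: define $\lambda_{\nu}(f)=\beta(\nu)E(P,f,\nu)(e)$ on the $K$--finite vectors, verify via the preceding proposition that $\nu\mapsto\lambda_{\nu}$ is holomorphic and of local uniform moderate growth, and then invoke Theorem~\ref{holextension} to extend $\lambda_{\nu}$ weakly holomorphically to $I_{V}^{\infty}$. You spell out more than the paper does---the explicit definition of $\widetilde{E}$, the density argument matching it with $E$ in the convergence region, and the role of the equivariance lemma---but the skeleton is identical; one small terminological slip is that $\beta$ should be a \emph{non-zero} (not identically vanishing) holomorphic function rather than a nowhere-vanishing one, since its zeros are precisely what produce the poles of the meromorphic continuation.
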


\begin{proof}
Let $\nu_{o}\in\mathfrak{a}_{\mathbb{C}}^{\ast}$ and let $Z$ be an open
neighborhood of $\nu_{o}$ with compact closure such that there exists $\beta$
and $d_{Z}$ as above. If $f\in\left(  I_{V}^{\infty}\right)  _{K}$ define
$\lambda_{\nu}(f)=\beta(\nu)E(P,f,\nu)(e)$. Then $\lambda_{v}\in\left(
I_{V}^{\infty}\right)  _{K}^{\ast}$ and if we set $f_{\lambda_{v},f}%
(g)=\beta(\nu)E(P,f,\nu)(g)$ then the above lemma shows that $\nu
\rightarrow\lambda_{\nu}$ is of uniform moderate growth on $Z$ hence satisfies
the hypotheses of Theorem \ref{holextension} which implies that the extension
of $\lambda_{\nu}$ to $I_{V}^{\infty}$ is weakly holomorphic in $\nu$. Since
$\nu_{o}$ is arbitrary in $\mathfrak{a}_{\mathbb{C}}^{\ast}$ this completes
the proof.
\end{proof}

\end{document}